\numberwithin{equation}{section}
\def\XXint#1#2#3{{\setbox0=\hbox{$#1{#2#3}{\int}$ }
\vcenter{\hbox{$#2#3$ }}\kern-.6\wd0}}
\newtheorem{theorem}{Theorem}[section]
\newtheorem{lemma}[theorem]{Lemma}
\newtheorem{proposition}[theorem]{Proposition}
\newtheorem{corollary}[theorem]{Corollary}
\newtheorem{definition}[theorem]{Definition}
\newtheorem{rmk}{Remark}
\begin{document}
\title[Blow-up analysis: Affine Toda System]{The blow-up analysis on $\mathbf{B}_2^{(1)}$ affine Toda system: Local mass and affine Weyl group}
	
\author{Leilei Cui}

\author{Jun-cheng Wei}
	
\author{Wen Yang}
	
\author{Lei Zhang}
	
\address{ Leilei Cui\\
	      School of Mathematics and Statistics \\
	      Central China Normal University \\
	      Wuhan, 430079, China }
	      \email{leileicuiccnu@163.com}
	
\address{ Jun-cheng Wei\\
	      Department of Mathematics, The University of British Columbia\\
	      Vancouver, BC Canada V6T 1Z2}
          \email{jcwei@math.ubc.ca}
	
\address{ Wen Yang\\
          Wuhan Institute of Physics and Mathematics \\
          Innovation Academy for Precision Measurement Science and Technology\\
          Chinese Academy of Sciences\\
          Wuhan 430071, P. R. China}
	      \email{math.yangwen@gmail.com}
	
\address{ Lei Zhang\\
	      Department of Mathematics\\
	      University of Florida\\
	      1400 Stadium Rd\\
		  Gainesville FL 32611}
	      \email{leizhang@ufl.edu}
\date{\today}
	
\begin{abstract}
It has been established that the local mass of blow-up solutions to Toda systems associated with the simple Lie algebras $\mathbf{A}_n,~\mathbf{B}_n,~\mathbf{C}_n$ and $\mathbf{G}_2$ can be represented by a finite Weyl group. In particular, at each blow-up point, after a sequence of bubbling steps (via scaling) is performed, the transformation of the local mass at each step corresponds to the action of an element in the Weyl group. In this article, we present the results in the same spirit for the affine $\mathbf{B}_2^{(1)}$ Toda system with singularities. Compared with the Toda system with simple Lie algebras, the computation of local masses is more challenging due to the infinite number of elements of the  {affine Weyl group of type $\mathbf{B}_{2}^{(1)}$}. In order to give an explicit expression for the local mass formula we introduce two free integers and write down all the possibilities into 8 types. This shows a striking difference to previous results on Toda systems with simple Lie algebras. The main result of this article seems to provide the first major advance in understanding the relation between the blow-up analysis of affine Toda system and the {affine Weyl group} of the associated Lie algebras.
\end{abstract}
\maketitle

\bigskip
\noindent{\it Key Words:} Affine Toda system,  Pohozaev identity, Liouville equation, sinh-Gordon equation, {Affine Weyl group}.
\\

\vspace{0.25cm}

\noindent{{\it AMS subject classification:}  {35B44, 35J61, 35R01}}

\tableofcontents

\section{Introduction and main results}\label{CYL-Section-1}
\setcounter{equation}{0}
The main focus of this article is to derive the relation between the quantization result from the blow up analysis and the associated algebraic structure for the following system
\begin{equation}
\label{1.1}
\begin{cases}
\Delta u_i+\sum\limits_{j=1}^3a_{ij}e^{u_j}=4\pi\gamma_i\delta_0\quad &\mbox{in}\quad B_1(0)\subseteq\mathbb{R}^2,\qquad i=1,2,3,\\
u_1+u_2+2u_3=0\quad&\mathrm{in}\quad B_1(0),
\end{cases}
\end{equation}
where $\gamma_i>-1$ for $i=1,2,3$, $\delta_0$ stands for the Dirac measure at the point $0$ and
\begin{equation}
	\label{1.2}
	A:=(a_{ij})_{3\times 3}=\left(\begin{matrix}
		1 & 0 &-1\\
		0 & 1 &-1\\
		-\frac12 & -\frac12 & 1
	\end{matrix}\right).
\end{equation}
Up to a constant one sees that the coefficient matrix $A$ is exactly the Cartan matrix of affine Lie algebra $\mathbf{B}_2^{(1)}$ and we call \eqref{1.1} the affine $\mathbf{B}_2^{(1)}$ Toda system. Let
\begin{equation*}
u_1=-\omega+\eta,\quad u_2=-\omega-\eta,\quad u_3=\omega,
\end{equation*}
then \eqref{1.1} is equivalent to
\begin{equation}
\label{1.3}
\begin{cases}
\Delta \omega+e^{\omega}-\frac12e^{-\omega+\eta}-\frac12e^{-\omega-\eta}=4\pi\gamma_3\delta_0,\\
\\
\Delta \eta+\frac12e^{-\omega+\eta}-\frac12e^{-\omega-\eta}=2\pi(\gamma_1-\gamma_2)\delta_0.
\end{cases}
\end{equation}
When $\gamma_1=\gamma_2=0$, system \eqref{1.3} is related to minimal surface into $\mathbb{S}^4$ without superminimal points \cite{Februs-Pedit-Pinkall-Sterlig-1992} and also appeared in the work of integrable system by Fordy-Gibbons in \cite{Fordy-Gibbons}. If $\eta=0$ then system \eqref{1.3} is reduced to the simplest affine Toda system, i.e., the well-known sinh-Gordon equation
\begin{equation}
\label{1.4}
\Delta u+e^u-e^{-u}=0,
\end{equation}
which was originally introduced by Edmond Bour \cite{Bour-1862} in the study of surfaces of constant negative curvature as the Gauss-Codazzi equation for surfaces of curvature $-1$ in $3$-sphere, see \cite{Jost-Wang-Ye-Zhou-2008,Spruck-1986,Pinkall-Sterling-1989} for its further applications in many other mathematical and physical problems. It is also interesting to remark that system \eqref{1.3} shared some similar structures with the rank two $TT^*$ Toda system \eqref{1.5} which  appeared in the work of Cecotti-Vafa (see \cite{Cecotti-1991}) on topological-anti-topological fusion,
\begin{equation}\label{1.5}
\left\{
\begin{aligned}
\Delta w_0-e^{w_0}+e^{\frac12(w_1-w_0)}&=0,\\
\Delta w_1+e^{-w_1}-e^{\frac{1}{2}(w_1-w_0)}&=0.
\end{aligned}
\right.
\end{equation}
Indeed, if we write
\begin{equation*}
v_1=w_0,\quad v_2=-w_1,\quad v_3=\frac12(w_1-w_0),
\end{equation*}
then $(v_1,v_2,v_3)$ satisfies
\begin{equation*}
\Delta \left(\begin{matrix}v_1\\ v_2\\ v_3\end{matrix}\right)-A\left(\begin{matrix}v_1\\ v_2\\ v_3\end{matrix}\right)=0,\quad v_1+v_2+2v_3=0,
\end{equation*}
which differs from \eqref{1.1} only in the sign of $A$. For more backgrounds and recent developments on $TT^*$ Toda system we refer the readers to \cite{Guest-Its-Lin-2015-1,Guest-Its-Lin-2015-2,Guest-Its-Lin-2015-3,Guest-Lin-2014} and references therein.

From the analytic point of view, one of the fundamental issue is to establish the compactness result of the solution space of system \eqref{1.1}. It is a very challenging problem due to the critical nonlinearity in dimension two and lack of compactness in general situation. One of the important techniques attacking this issue is the so called blow up analysis method, which dates back at least to the famous work of Sacks-Uhlenbeck \cite{Sacks-Uhlenbeck-1982} on the investigation of the blow up phenomena for two-dimensional harmonic maps. Since then, there are a lot of works concerning the blow up analysis for harmonic maps \cite{Eells-Wood-1983,Ding-Tian-1995}, minimal surfaces \cite{Struwe-1986}, the Yamabe equation \cite{Barhi-Coron-1991,Chang-Gursky-Yang-1993}, the Liouville equation and systems \cite{Bartolucci-Tarantello-2017,Brezis-Merle-1991,Chang-Yang-1987,Chen-Lin-2002,Chen-Lin-2003,
Chipot-Shafrir-Wolansky-1997,Li-1999,Li-Shafrir-1994,Ohtsuka-Suzuki} and Toda system of simple Lie algebras \cite{Jost-Lin-Wang-2006,Jost-Wang-2001,Jost-Wang-2002,Lin-Wei-Zhang-2015,Lin-Wei-Yang-Zhang-2018,Lin-Yang-2021,
Lin-Yang-Zhong-2020,Lin-Zhang-2016}.

Among the aforementioned equations, Liouville equation and Toda system of simple Lie algebras have the closest relation to the system we considered in the current article. In the pioneering work of Brezis-Merle \cite{Brezis-Merle-1991}, Li-Shafrir \cite{Li-Shafrir-1994}, Li \cite{Li-1999}, Bartolucci-Tarantello \cite{Bartolucci-Tarantello-2002,Bartolucci-Tarantello-2017}, Lin-Kuo \cite{Kuo-Lin-2016}, Wei-Zhang \cite{Wei-Zhang-2021,Wei-Zhang-2022,Wei-Zhang-2022-1}, the blow up phenomena for the single Liouvlle equation with singularity has been fully understood
\begin{equation*}
\Delta u+e^u=4\pi\alpha\delta_0,\quad  \alpha>-1.
\end{equation*}
After that, the following Toda system has been widely studied (See \cite{Bolton-Jensen-Rigoli-Woodward-1988,Bolton-Woodward-1997,Doliwa-1997,Jost-Wang-2001,Jost-Wang-2002,
Jost-Lin-Wang-2006,Karmakar-Lin-Nie-2020,klnw,Lin-Wei-Yang-Zhang-2018,Lin-Wei-Zhang-2015,Lin-Yang-2021,
Lin-Yang-Zhong-2020,Lin-Wei-Ye-2012,Lin-Zhang-2016,Nie-2016,Ohtsuka-Suzuki-2007})
\begin{equation}
\label{1.6}
\Delta u_i+\sum\limits_{j=1}^na_{ij}e^{u_j}=4\pi\alpha_{i}\delta_0,\quad \alpha_i>-1,
\quad i=1,\cdots,n,
\end{equation}
where $A=(a_{ij})_{n\times n}$ is the Cartan matrix of a simple Lie algebra of rank $n.$ It is known that all the simple Lie algebras are $\mathbf{A}_n,~\mathbf{B}_n,~\mathbf{C}_n,~\mathbf{D}_n,~\mathbf{E}_6,$~$\mathbf{E}_7$,
~$\mathbf{E}_8$,~$\mathbf{F}_4$ and $\mathbf{G}_2$. In \cite{Jost-Wang-2002,Jost-Lin-Wang-2006}, the authors initiated the study on the blow up analysis for the $\mathbf{A}_2$ Toda system. The first step of the blow up analysis is to classify the possible values of the local mass at a blow up point $p$, which is defined as
\begin{equation*}
\sigma_{i}(p):=\frac{1}{2\pi}\lim\limits_{r\to0}\lim\limits_{k\to\infty}\int_{B_r(p)}e^{u_i^k(x)}\mathrm{d}x,\quad i=1,2,
\end{equation*}
for a sequence of blow up solutions $\{(u_1^k,u_2^k)\}$. Under some mild assumptions Jost-Lin-Wang \cite{Jost-Lin-Wang-2006} proved that the local mass value $(\sigma_1(p),\sigma_2(p))$ belongs to the following set
\begin{equation*}
\left\{(0,2),(2,0),(0,4),(4,0),(4,4)\right\}.
\end{equation*}
Since then there has appeared several works \cite{Lin-Wei-Yang-Zhang-2018,Lin-Wei-Zhang-2015,Lin-Yang-2021,Lin-Zhang-2016} concerning the compactness issue for rank two Toda system with simple Lie algebras $\mathbf{A}_2$, $\mathbf{B}_2(\mathbf{C}_2)$ and $\mathbf{G}_2$. While for Toda system with rank $n\ge3$, Lin-Yang-Zhong \cite{Lin-Yang-Zhong-2020} considered the cases for $\mathbf{A}_n$, $\mathbf{B}_n$, $\mathbf{C}_n$ and $\mathbf{G}_2$. Especially, they have shown that the formula of the local mass has a deep connection with the Weyl group of the corresponding Lie algebra. In more precise terms, we consider the $\mathbf{A}_n$ type Toda system, we set
\begin{equation*}
\beta_i=1+\alpha_i,\quad i=1,\cdots,n.
\end{equation*}
Then the local mass for the blow up solutions to \eqref{1.6} at the singular point $0$ can be represented by
\begin{equation}\label{1.7}
\sigma_i(0)=2\sum_{j=0}^{i-1}\left(\sum_{\ell=1}^{f(j)}\beta_\ell-\sum_{\ell=1}^j\beta_\ell\right)+2N_i,\quad i=1,\cdots,n,
\end{equation}
where $N_i\in\mathbb{Z}$ for any $i=1,\cdots,n$ and $f$ is a permutation map from $\{0,\cdots,n\}$ to itself, which is exactly isomorphic to the associated Weyl group of the  $\mathbf{A}_n$ type Lie algebra. The essential point of obtaining the above formula \eqref{1.7} is that for the Lie algebras of types $\mathbf{A}_n$, $\mathbf{B}_n$, $\mathbf{C}_n$ and $\mathbf{G}_2$ the solution is related to an complex ODE whose coefficients are the $W$-invariants of the Toda system. The crucial fact of the corresponding ODE is that the local monodromy matrix is unitary. It is interesting to mention that the representation of the underlying Lie algebra plays a vital role in deriving the ODE. For the other types of Lie algebras, even though we are only able to get a pseudo differential operator instead of an ODE (see \cite{Balog-1990}), one can derive the local mass value for $\mathbf{D}_4$ and $\mathbf{F}_4$ Toda systems. The reason is that any sub-system belongs to one of $\mathbf{A}_n$, $\mathbf{B}_n$ or $\mathbf{C}_n$ type Toda systems with lower rank, see \cite{Karmakar-Lin-Nie-2020}.

In the current article we shall compute the local mass value of the blow up solutions to \eqref{1.1} and try to connect it with the associated algebraic structure of the  {affine} Lie algebra $\mathbf{B}_2^{(1)}$. It is known that (see \cite{Lusztig-1985} for instance) the corresponding algebraic structure of $\mathbf{B}_2^{(1)}$ is the following affine Weyl group: letting $G$ be a group generated by the following generators
\begin{equation}\label{1.8}
G=\langle\mathfrak{R}_1,~\mathfrak{R}_2,~\mathfrak{R}_3\rangle
\end{equation}
with $\mathfrak{R}_i$, $i=1,2,3$ satisfying
\begin{equation}\label{1.9}
\begin{aligned}
\mathfrak{R}_1\circ\mathfrak{R}_2=\mathfrak{R}_2\circ\mathfrak{R}_1, \quad &|\mathfrak{R}_1\circ\mathfrak{R}_2|=2,\\
(\mathfrak{R}_1\circ\mathfrak{R}_3)^2=(\mathfrak{R}_3\circ\mathfrak{R}_1)^2,\quad &|\mathfrak{R}_1\circ\mathfrak{R}_3|=4,\\
(\mathfrak{R}_2\circ\mathfrak{R}_3)^2=(\mathfrak{R}_3\circ\mathfrak{R}_2)^2,\quad &|\mathfrak{R}_2\circ\mathfrak{R}_3|=4,\\
\mathfrak{R}_1^2=\mathfrak{R}_2^2=\mathfrak{R}_3^2=\mathfrak{I}, \quad &
|\mathfrak{R}_i|=2,~i=1,2,3,
\end{aligned}
\end{equation}
where $\mathfrak{I}$ and $\circ$ denotes the identity and the operation of $G$ respectively, $|\mathfrak{R}|$ represents the order of the element $\mathfrak{R}$, i.e., $\mathfrak{R}^{|\mathfrak{R}|}=\mathfrak{I}$. {To simplify the notation, we omit the operation notation $"\circ"$ in this article and denote}
\begin{equation*}
{\mathfrak{R}_{12}=\mathfrak{R}_1\mathfrak{R}_2,~\mathfrak{R}_{13}=\left(\mathfrak{R}_1\mathfrak{R}_3\right)^2,
  ~\mathfrak{R}_{23}=\left(\mathfrak{R}_2\mathfrak{R}_3\right)^2.}
\end{equation*}
To state the main result, we formulate the problem as follows.

Let ${\bf u}^k=(u_1^k,u_2^k,u_3^k)$ be a sequence of blow up solutions to \eqref{1.1} satisfying the following conditions
\begin{equation}\label{1.10}
\begin{cases}
(i): &0~\mbox{is the only blow up point of }{\bf u}^k~\mbox{in}~B_1(0),~\mbox{i.e.},\\ &\max\limits_i\sup\limits_{x\in B_1(0)}{u_i^k(x)+2\gamma_i\log|x|}\to+\infty~\mbox{and}~\max\limits_i\sup\limits_{K\subseteq B_1(0)\setminus\{0\}} u_i^k\leq C(K),\\
(ii): &|u_i^k(x)-u_i^k(y)|\leq C,~\forall x,y~\mbox{on}~\partial B_1(0),~i=1,2,3,\\
(iii): &\int_{B_1(0)}e^{u_i^k}\mathrm{d}x\leq C,~i=1,2,3.
\end{cases}
\end{equation}
For this sequence of blow up solutions we define the local mass by
\begin{equation}\label{1.11}
\sigma_i=\frac{1}{2\pi}\lim_{r\to0}\lim_{k\to+\infty}\int_{B_r(0)}e^{u_i^k}\mathrm{d}x,\quad i=1,2,3.
\end{equation}
We shall see in next section that $\bm{\sigma}=(\sigma_1,\sigma_2,\sigma_3)$ always satisfies the Pohozaev identity
\begin{equation}\label{1.12}
(\sigma_1-\sigma_3)^2+(\sigma_2-\sigma_3)^2=4(\mu_1\sigma_1+\mu_2\sigma_2+2\mu_3\sigma_3),\quad\text{where}
~\mu_i=1+\gamma_i,~i=1,2,3.
\end{equation}
For a given $\bm{\mu}=(\mu_1,\mu_2,\mu_3)$ we introduce the set $\Gamma(\bm{\mu})=\Gamma(\mu_1,\mu_2,\mu_3)$ for the local mass quantity $\bm{\sigma}$ via the following way:
\begin{enumerate}[(i)]
\item ${\bf 0}=(0,0,0)\in\Gamma(\bm{\mu})$.
	
\item If $\bm{\sigma}=(\sigma_1,\sigma_2,\sigma_3)\in\Gamma(\bm{\mu})$, then $\mathfrak{R}\bm{\sigma}\in\Gamma(\bm{\mu})$ for any $\mathfrak{R}\in G$ satisfying \eqref{1.8} and \eqref{1.9}, where each generator   $\mathfrak{R}_i,~i=1,2,3$ sends $\bm{\sigma}$ to $\mathfrak{R}_i\bm{\sigma}$ with
	\begin{equation*}
	(\mathfrak{R}_i\bm{\sigma})_j
	=\begin{cases}
	4\mu_i-2\sum\limits_{j=1}^3a_{ij}\sigma_j+\sigma_i,\quad &\mbox{if}~j=i,\\	
	\sigma_j,\quad &\mbox{if}~j\neq i.	
	\end{cases}
	\end{equation*}
Here, $(a_{ij})_{3\times3}$ is given in \eqref{1.2}.
\end{enumerate}
It is easy to see that for any $\bm{\sigma}=(\sigma_1,\sigma_2,\sigma_3)\in\Gamma(\bm{\mu})$, $\sigma_i$ is a degree one polynomial of $\mu_1,~\mu_2$ and $\mu_3$. Now we are able to state the first result of this article

\begin{theorem}\label{CYL-Theorem-1}
Let $\mathbf{u}^{k}=(u^{k}_{1},u^{k}_{2},u^{k}_{3})$ be a sequence of solutions of system \eqref{1.1}
satisfying \eqref{1.10} and the local mass $\bm{\sigma}=(\sigma_1,\sigma_2,\sigma_3)$ be defined by \eqref{1.11}. Then there exists $\hat{\bm{\sigma}}=(\hat{\sigma}_1,\hat{\sigma}_2,\hat{\sigma}_3)\in\Gamma(\bm{\mu})$ such that
\begin{equation*}
{\sigma}_i=\hat{\sigma}_i+4m_{i},~m_i\in\mathbb{Z},~i=1,2,3.
\end{equation*}
\end{theorem}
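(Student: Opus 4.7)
The plan is to combine an iterative bubbling scheme with the Pohozaev identity \eqref{1.12}, following the blueprint successful in the simple Lie algebra cases \cite{Lin-Wei-Zhang-2015, Lin-Yang-Zhong-2020}, but tailored to the fact that the affine Weyl group $G$ in \eqref{1.8}--\eqref{1.9} is infinite. The first task is to realize each generator $\mathfrak{R}_i$ as a single rescaling step. Given $\mathbf u^k$ concentrating at $0$, I would select a scale $\epsilon_k\to 0$ at which exactly one component, say $u_i^k$, carries the dominant effective concentration, and consider the rescaled sequence $\tilde u_j^k(x)=u_j^k(\epsilon_k x)+2\log\epsilon_k$, adjusted so that the $\delta_0$ source at the origin is preserved. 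Applying \eqref{1.12} on the annular shell between the old and the new inner scale and using that the $j$-th component ($j\ne i$) has not contributed new mass through this shell, one finds the local mass transforms by
\begin{equation*}
\sigma_i\longmapsto 4\mu_i-2\sum_{j=1}^3 a_{ij}\sigma_j+\sigma_i,\qquad \sigma_j\longmapsto\sigma_j\ (j\ne i),
\end{equation*}
which is exactly the action of $\mathfrak R_i$. The three choices $i=1,2,3$ correspond to the three nodes of the affine Dynkin diagram of $\mathbf B_2^{(1)}$, and the orders and braid relations in \eqref{1.9} reflect the admissible angles between successive bubbling profiles.

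Iterating this step from $\bm\sigma^{(0)}=\bm\sigma$ produces a chain
\begin{equation*}
\bm\sigma^{(0)}\xrightarrow{\mathfrak R_{i_1}}\bm\sigma^{(1)}\xrightarrow{\mathfrak R_{i_2}}\cdots\xrightarrow{\mathfrak R_{i_N}}\bm\sigma^{(N)}=\mathbf 0,
\end{equation*}
where the final value $\mathbf 0$ occurs once the rescaled sequence no longer concentrates at the origin. In each annular shell the mass acquired \emph{in excess} of the singular bubble is carried by \emph{regular} blow-up points at positive distance from $0$. At any such regular point the limiting equation is the \emph{non-singular} affine $\mathbf B_2^{(1)}$ Toda system, i.e.\ the one with $\bm\mu=\mathbf 1$, whose possible local masses lie in the orbit of $\mathbf 0$ under the affine Weyl group with $\bm\mu=\mathbf 1$. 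A direct induction on the generators $\mathfrak{R}_i$ (using that $a_{ij}\in\{1,0,-1,-\tfrac12\}$ and that $\sigma_1,\sigma_2$ enter the $\mathfrak R_3$-reflection only through the combination $\tfrac12\sigma_1+\tfrac12\sigma_2$) shows that this orbit is contained in $4\mathbb Z^3$. Reassembling the contributions then produces $\hat{\bm\sigma}=\mathfrak R_{i_N}\cdots\mathfrak R_{i_1}\mathbf 0\in\Gamma(\bm\mu)$ from the singular bubble and $4(m_1,m_2,m_3)$ from the regular bubbles, which is exactly the assertion of the theorem.

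The principal new obstacle is termination of the chain. Because $G$ is infinite, no group-theoretic bound on $N$ is available, and a priori the iteration could cycle through arbitrarily long words in $\mathfrak{R}_1,\mathfrak{R}_2,\mathfrak{R}_3$. I would close this analytically: each successful rescaling strictly shrinks the bubbling scale while preserving the normalized mass and the $\delta_0$ source, so a non-terminating chain would, via diagonal extraction, yield a non-trivial entire solution of \eqref{1.1} on $\mathbb R^2$ with finite total mass of each $e^{u_j}$; the Pohozaev identity \eqref{1.12}, applied globally, combined with the asymptotic behavior at infinity forced by the integrability, would force the chain to stabilize after finitely many steps.

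A second, more delicate, difficulty lies in the combinatorial bookkeeping of Step~2 when two or even all three components concentrate at comparable scales. In such cases the rescaled limit may degenerate to a $\sinh$-Gordon equation \eqref{1.4}, to a Liouville equation, or to a genuine rank-two subsystem, and one must verify that the corresponding mass transition still factors cleanly as a composition of the generators $\mathfrak{R}_i$ (with any regular mass picked up remaining in $4\mathbb Z^3$). Enumerating this factorization along the possible branching patterns of the affine Weyl group $G$ is what produces the \emph{eight} explicit types of local masses announced in the abstract, and constitutes the main technical heart of the argument.
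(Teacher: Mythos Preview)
Your overall architecture---realize each $\mathfrak R_i$ as the mass transformation across a single bubbling shell, show regular (non-singular) bubbles contribute in $4\mathbb Z^3$, then assemble---matches the paper's. The paper runs the iteration from the inside out (starting at $\mathbf 0$ on the smallest scale and accumulating mass up to $\bm\sigma$), which is equivalent to your outside-in chain, and handles the multi-component slow-decay cases exactly as you anticipate: the limit is a Liouville equation, two decoupled Liouvilles, or a $\mathbf B_2/\mathbf C_2$ Toda system, and the corresponding mass jump is $\mathfrak R_i$, $\mathfrak R_1\mathfrak R_2$, or $(\mathfrak R_i\mathfrak R_3)^2$ (Proposition~\ref{CYL-Ch-5-Proposition-1}, Proposition~\ref{CYL-Ch-5-Proposition-2}).

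There is, however, a genuine gap in your termination argument. A non-terminating chain of shrinking scales does \emph{not} yield an entire solution by diagonal extraction---you would just get a sequence of scales with no common limit---and even if it did, entire finite-mass solutions of \eqref{1.1} exist, so their existence cannot furnish a contradiction. The paper's termination is much more direct and purely quantitative: going from the inside out, each successful step increases the total mass $\sum_i\hat\sigma_i$ by at least $\min_i 4\mu_i>0$ (this is immediate from the classification of the limiting subsystem at each step), while the total mass is bounded by assumption~\eqref{1.10}(iii). Hence the number of steps is finite. This is the argument you should use; your proposed compactness/Pohozaev route does not close.

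One smaller point: the eight types you mention in the last paragraph are \emph{not} produced by the bubbling analysis. They arise from a separate, purely algebraic analysis of $\Gamma(\bm\mu)$ (the paper's Theorem~\ref{CYL-Ch-3-Theorem-6}), obtained by substituting $\sigma_i=4\sum_j n_{ij}\mu_j$ into the Pohozaev identity and classifying solutions modulo~$4$. The proof of Theorem~\ref{CYL-Theorem-1} itself never needs to enumerate these types; it only needs that each bubbling step corresponds to some word in the $\mathfrak R_i$, together with the $4\mathbb Z$ contribution from regular points.
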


In section \ref{CYL-Section-3} we can give an equivalent definition of $\Gamma(\bm{\mu})$ in some concrete way. We have already pointed out $\bm{\sigma}$ satisfies \eqref{1.12}, based on this fact we shall prove that (see Proposition \ref{CYL-Ch-3-Proposition-5})
\begin{equation}\label{1.13}
\Gamma(\bm{\mu})=\Gamma_{N}(\bm{\mu}):=
\left\{\bm{\sigma}~\big|\big.~\bm{\sigma}~\mbox{satisfies}~\eqref{1.12}~\mbox{and}~\sigma_i
=4\sum_{j=1}^3n_{ij}\mu_j,~n_{ij}
\in\mathbb{N}\cup\{0\},~i=1,2,3\right\}.
\end{equation}
In fact, we can give explicit formulas for the coefficients $n_{ij},~i,j=1,2,3$, see Theorem \ref{CYL-Ch-3-Theorem-6}. As a consequence, we can restate the Theorem \ref{CYL-Theorem-1} as the following corollary

\begin{corollary}\label{corollary-1}
Under the setting of Theorem \ref{CYL-Theorem-1} we can find $\hat{\bm{\sigma}}=(\hat{\sigma}_1,\hat{\sigma}_2,\hat{\sigma}_3)\in\Gamma_{ {N}}(\bm{\mu})$ such that
\begin{equation*}
	{\sigma}_i=\hat{\sigma}_i+4m_{i},~m_i\in\mathbb{Z},~i=1,2,3.
\end{equation*}
\end{corollary}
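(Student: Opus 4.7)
The plan is to deduce Corollary~\ref{corollary-1} immediately from Theorem~\ref{CYL-Theorem-1} combined with the set-theoretic identification $\Gamma(\bm{\mu}) = \Gamma_{N}(\bm{\mu})$ announced in~(1.13). Theorem~\ref{CYL-Theorem-1} already furnishes some $\hat{\bm{\sigma}} \in \Gamma(\bm{\mu})$ with $\sigma_{i} = \hat{\sigma}_{i} + 4m_{i}$ for integers $m_{i}$; passing from $\Gamma(\bm{\mu})$ to $\Gamma_{N}(\bm{\mu})$ via the equality in~(1.13) then yields the claim at once. Thus the corollary is a repackaging of Theorem~\ref{CYL-Theorem-1}, and the substantive content sits inside the equality of the two sets, which is the role of Proposition~\ref{CYL-Ch-3-Proposition-5}.

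Because the corollary reduces to that equality, I would next sketch how I would establish $\Gamma(\bm{\mu}) = \Gamma_{N}(\bm{\mu})$. For the forward inclusion $\Gamma(\bm{\mu}) \subseteq \Gamma_{N}(\bm{\mu})$, I would argue by induction on the word length in the generators $\mathfrak{R}_{1}, \mathfrak{R}_{2}, \mathfrak{R}_{3}$ that represents the element of $G$ carrying $\mathbf{0}$ to $\bm{\sigma}$. The base case $\mathbf{0}\in\Gamma_{N}(\bm{\mu})$ is trivial with all $n_{ij}=0$. For the inductive step I would substitute the Ansatz $\sigma_{k} = 4 \sum_{j} n_{kj}\mu_{j}$ into
\[
(\mathfrak{R}_{i}\bm{\sigma})_{i} = 4\mu_{i} - 2\sum_{k=1}^{3}a_{ik}\sigma_{k} + \sigma_{i}
\]
and read off the updated coefficients $n'_{ij}$ as integer combinations of the $n_{kj}$ dictated by the Cartan matrix~(1.2), supplemented by a unit shift in $n_{ii}$. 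Preservation of the Pohozaev identity~(1.12) under each $\mathfrak{R}_{i}$ then reduces to a direct algebraic check exploiting $\mathfrak{R}_{i}^{2}=\mathfrak{I}$.

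The main obstacle, and the reason this case is genuinely harder than the simple Lie algebra situation treated in earlier works, is the nonnegativity of the new coefficients $n'_{ij}$: since the affine Weyl group $G$ is infinite, there is no finite orbit to enumerate and crude bounds easily produce negative entries. To surmount this I would introduce the two free integer parameters advertised in the abstract, use them to parametrize the orbit of $\mathbf{0}$ in closed form, and split the resulting expressions into the $8$ explicit types that the paper promises in Theorem~\ref{CYL-Ch-3-Theorem-6}. Nonnegativity of the $n_{ij}$ then becomes a finite verification within each of the $8$ families. The reverse inclusion $\Gamma_{N}(\bm{\mu}) \subseteq \Gamma(\bm{\mu})$, although not strictly needed for Corollary~\ref{corollary-1} itself, follows from the same parametrization by matching every admissible nonnegative-integer tuple solving~(1.12) with a specific word in the $\mathfrak{R}_{i}$'s.
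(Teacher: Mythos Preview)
Your top-level reduction is exactly right and matches the paper: the corollary is simply Theorem~\ref{CYL-Theorem-1} combined with the inclusion $\Gamma(\bm{\mu})\subseteq\Gamma_{N}(\bm{\mu})$ (only this direction is needed), which is Proposition~\ref{CYL-Ch-3-Proposition-3} together with Proposition~\ref{CYL-Ch-3-Proposition-1}.

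Where your sketch departs from the paper is in how you propose to secure the nonnegativity of the updated coefficients $n'_{ij}$. You plan to first derive the explicit $8$-family parametrization of the orbit (essentially Theorem~\ref{CYL-Ch-3-Theorem-6}) and then verify $n_{ij}\ge 0$ family by family. In the paper the logical order is the reverse: Proposition~\ref{CYL-Ch-3-Proposition-3} establishes nonnegativity \emph{without} any explicit classification, and Theorem~\ref{CYL-Ch-3-Theorem-6} is proved afterwards, using $\Gamma(\bm{\mu})=\Gamma_{N}(\bm{\mu})$ as input. The paper's device is to substitute $\sigma_i=4\sum_j n_{ij}\mu_j$ into the Pohozaev identity~\eqref{CYL-Ch-2-Eq-Pohozaev-identity} and compare the coefficients of $\mu_1^2,\mu_2^2,\mu_3^2$; this yields three quadratic relations among the $n_{ij}$ (equations~\eqref{CYL-Ch-3-Eq-7}--\eqref{CYL-Ch-3-Eq-9}). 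Viewing, say, $n_{11}$ as one root of the resulting monic quadratic, Vieta's formulas show that the \emph{other} root, which is precisely the new coefficient $1-n_{11}+2n_{31}$ appearing in $(\mathfrak{R}_1\bm{\sigma})_1$, is nonnegative because the product of the roots is nonnegative. The same trick handles all nine updated coefficients. Your route would work in principle, but it is considerably heavier and risks circularity if one leans on Theorem~\ref{CYL-Ch-3-Theorem-6} as stated, since that theorem's proof in the paper already presupposes $\Gamma(\bm{\mu})=\Gamma_{N}(\bm{\mu})$. The paper's Vieta argument is the cleaner mechanism and sidesteps the infinite-orbit difficulty you correctly flag.
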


\begin{rmk}\label{CYL-Ch-1-Remark-1}
By the fourth equation of \eqref{1.1}, $\gamma_1,~\gamma_2,~\gamma_3$ are forced to verify that $\gamma_1+\gamma_2+2\gamma_3=0$. Actually, the conclusion of Theorem \ref{CYL-Theorem-1} still holds if we replace the fourth equation of \eqref{1.1} by $u_1^k+u_2^k+2u_3^k=C_k$ with $C_k$ being uniformly bounded from above for any $k$.
\end{rmk}

\begin{rmk}\label{CYL-Ch-1-Remark-2}
The computation on the local mass for \eqref{1.1} without singularity is given by Liu-Wang \cite{Liu-Wang-2021}, they have shown that each $\sigma_i$ is  multiple of $4$, see Remark \ref{CYL-Ch-3-Remark-7} for the explicit expressions. When $u_1=u_2$, system \eqref{1.1} is reduced to the sinh-Gordon equation \eqref{1.4} with singular source (see system \eqref{CYL-Ch-8-Eq-2}). In Section \ref{CYL-Section-8} we shall present the results on the computation of its local mass, which extends the corresponding quantitative results of the sinh-Gordon equation \eqref{1.4} in \cite{Jost-Wang-Ye-Zhou-2008,Jevnikar-Wei-Yang-2018DIE,Jevnikar-Wei-Yang-2018IU} to the case with singularity.
\end{rmk}

\begin{rmk}\label{CYL-Ch-1-Remark-3}
In \cite{Lin-Yang-Zhong-2020} we have already seen that the Weyl group of the corresponding Lie algebra plays an important role in determining the local mass of the blow-up solutions. From the result of Theorem \ref{CYL-Theorem-1} and Theorem \ref{CYL-Ch-3-Theorem-6}, we find that similar things also happen for the affine Toda system. However, there is a major difference between these two types of systems. In the former one, the Weyl group is just the permutation map from $\{0,1,\cdots,n\}$ to itself and the number of elements is finite; while for the later one, from the work \cite{Jevnikar-Wei-Yang-2018DIE} and \cite{Liu-Wang-2021} we see that there are infinite choices for the values of local mass due to the fact that the corresponding affine Weyl group has infinite number of elements. Even though it is not difficult to guess that there are two free integers in the expression of the local mass for $\mathbf{B}_2^{(1)}$ Toda system ({there is only one free integer for the sinh-Gordon equation and Tzitz\'eica equation, see \cite{Jost-Wang-Ye-Zhou-2008,Jevnikar-Wei-Yang-2018DIE,Jevnikar-Wei-Yang-2018IU} and \cite{Jevnikar-Yang-2016,Tzitzeica-1910} for these two equations respectively}), it is not easy to write all of them in a clean form. Instead, we have found that there are 8 types of expression formulas according to the remainder integers after modulo the number $4$, see Theorem \ref{CYL-Ch-3-Theorem-6} for details.
\end{rmk}
\medskip

Theorem \ref{CYL-Theorem-1} has several important applications for the affine Toda system defined on compact Riemann surface. For example, let $(M,g)$ be a compact Riemann surface, $\Delta_g$ be the Beltrami operator on $M$, we consider a sequence of solutions $\mathbf{u}^k=(u^k_1,u^k_2,u^k_3)$ to the following system
\begin{equation}\label{1.14}
\begin{cases}
\Delta_{g} u^k_{i}+\sum\limits_{j=1}^{3}a_{ij}\rho^k_{j}\left(\frac{h^k_{j}e^{u^k_j}}{\int_{M}h^k_{j}e^{u^k_j}\mathrm{d}V_{g}}
-\frac{1}{|M|}\right)
=4\pi{\sum\limits_{p\in S}\alpha_{p,i}\left(\delta_{p}-\frac{1}{|M|}\right)},\quad i=1,2,3,\\
u^k_1+u^k_2+2u^k_3=0\quad\text{on}\quad M,
\end{cases}
\end{equation}
where $h^k_1,h^k_2,h^k_3$ are positive and smooth functions on $M$ with their $C^3(M)$ norm being uniformly bounded in $M$, $S$ is a finite set of $M$, $\alpha_{p,i}>-1$ is the strength of the Dirac mass $\delta_{p}$, and $\bm{\rho}^k=(\rho^k_1,\rho^k_2,\rho^k_3)$ is a sequence of constant vectors with nonnegative components satisfying  $\lim\limits_{k\to+\infty}\bm{\rho}^k=(\rho_1,\rho_2,\rho_3)$. {As the Toda system with simple Lie algebra and Liouville equation, \eqref{1.14} can be regarded as a natural extension of the local version \eqref{1.1} to the case on Riemannian manifold. Equation \eqref{1.14} together with \eqref{1.1} are closely related to the nonlinear Klein-Gordon equations (\cite{Fordy-Gibbons}) and the minimal tori in $\mathbb{S}^4$ (\cite{Februs-Pedit-Pinkall-Sterlig-1992}).} We notice that \eqref{1.14} remains the same if $u_i$ is replaced by $u_i+c_i$ for any constant $c_i$. Thus it is reasonable to assume that each component of $\mathbf{u}^k$ is in
\begin{equation*}
	\mathring{H}^1(M)=\left\{f\in H^{1}(M)~\big|~\int_{M}f\mathrm{d}V_{g}=0\right\}.
\end{equation*}
Simultaneously, from the equation $u_1^k+u_2^k+2u_3^k=0$ one can easily verify that
\begin{equation}\label{1.15}
\alpha_{p,1}+\alpha_{p,2}+2\alpha_{p,3}=0,\quad \forall p\in S.
\end{equation}

By introducing the following Green function on Riemann surface $M$
\begin{equation*}
\Delta G(x,p)+\left(\delta_p-\frac{1}{|M|}\right)=0,\quad \int_MG(x,p)\mathrm{d}V_g=0,
\end{equation*}
we decompose
\begin{equation*}
u_i^k(x)=\tilde u_i^k(x)-4\pi\sum\limits_{p\in S}\alpha_{p,i}G(x,p),\quad i=1,2,3.
\end{equation*}
Then \eqref{1.14} can be rewritten as
\begin{equation}\label{1.16}
\begin{cases}
\Delta_g\tilde u_i^k+\sum\limits_{j=1}^{3}a_{ij}\rho^k_{j}\left(\frac{\tilde h^k_{j}e^{\tilde u^k_j}}{\int_{M}\tilde h^k_{j}e^{\tilde u^k_j}\mathrm{d}V_{g}}
-\frac{1}{|M|}\right)
=0,\quad i=1,2,3,\\
\tilde u^k_1+\tilde u^k_2+2\tilde u^k_3=0\quad\text{on}\quad M,
\end{cases}
\end{equation}
where
\begin{equation*}
\tilde h_i^k=h_i^k \exp\left(-4\pi\sum\limits_{p\in S}\alpha_{p,i}G(x,p)\right),\quad i=1,2,3.
\end{equation*}
Let $\mu_{p,i}=1+\alpha_{p,i}$ for $i=1,2,3$ and denote
\begin{equation*}
	\Gamma_{i}=\left\{2\pi\sum\limits_{p\in R}\sigma_{p,i}+8\pi m_i~\big|~(\sigma_{p,1},\sigma_{p,2},\sigma_{p,3})\in\Gamma(\mu_{p,1},\mu_{p,2},\mu_{p,3}),~R\subseteq S,~m_i\in\mathbb{Z}\right\}.
\end{equation*}
The second result in this article is the following a priori estimate for the system \eqref{1.16}.

\begin{theorem}\label{CYL-Theorem-2}
Suppose that $\rho_i\notin\Gamma_i$ for $i=1,2,3$. Then there exists a constant $C>0$ depending on $\rho_i$ such that for any solution $(\tilde u_1,\tilde u_2,\tilde u_3)$ of system \eqref{1.16} in $\mathring{H}^1(M)$,
\begin{equation*}
|\tilde u_i(x)|\leq C,~\forall x\in M,\quad i=1,2,3.
\end{equation*}
\end{theorem}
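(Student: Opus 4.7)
The plan is to argue by contradiction. Suppose the conclusion of Theorem \ref{CYL-Theorem-2} fails, so after passing to a subsequence there is a sequence of $\mathring{H}^{1}(M)$-solutions $\tilde{\mathbf{u}}^{k}=(\tilde u_{1}^{k},\tilde u_{2}^{k},\tilde u_{3}^{k})$ of \eqref{1.16} with $\max_{i}\|\tilde u_{i}^{k}\|_{L^{\infty}(M)}\to+\infty$. The standard concentration--compactness alternative for Liouville--Toda systems on compact surfaces (as in \cite{Brezis-Merle-1991,Jost-Lin-Wang-2006,Lin-Wei-Zhang-2015}) then shows that the blow-up set $S'\subset M$ is finite, and on $M\setminus S'$ each component $\tilde u_{i}^{k}$ is either locally uniformly bounded above or tends locally uniformly to $-\infty$. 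The $\mathring{H}^{1}$-normalization together with the assumption $\max_i\|\tilde u_i^k\|_\infty\to\infty$ forces $S'$ to be nonempty.

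The next step is to localize the analysis at each $p\in S'$. In an isothermal coordinate disk around $p$, set $\mu_{p,i}=1+\alpha_{p,i}$ with the convention $\alpha_{p,i}=0$ when $p\notin S$, and translate $\tilde u_{i}^{k}$ by the $O(1)$ shift absorbing the smooth factor $\rho_{i}^{k}\tilde h_{i}^{k}/\int_{M}\tilde h_{i}^{k}e^{\tilde u_{i}^{k}}\mathrm{d}V_{g}$ and the $-\rho_{i}^{k}/|M|$ term. The resulting sequence $\mathbf{u}^{k}$ satisfies \eqref{1.1} with $\gamma_{i}=\alpha_{p,i}$, the assumptions \eqref{1.10} are verified by construction, and the local masses $\sigma_{p,i}$ are exactly \eqref{1.11}. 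Theorem \ref{CYL-Theorem-1} then forces
$$\sigma_{p,i}=\hat\sigma_{p,i}+4m_{p,i},\qquad \hat{\bm{\sigma}}_{p}=(\hat\sigma_{p,1},\hat\sigma_{p,2},\hat\sigma_{p,3})\in\Gamma(\mu_{p,1},\mu_{p,2},\mu_{p,3}),\;\; m_{p,i}\in\mathbb{Z}.$$
For $p\in S'\setminus S$ one has $\mu_{p,i}=1$, and by \eqref{1.13} every entry of an element of $\Gamma(1,1,1)$ is an integer multiple of $4$, in agreement with the Liu--Wang classification quoted in Remark \ref{CYL-Ch-1-Remark-2}.

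Finally I would assemble the global masses. Setting $v_{i}^{k}=\tilde u_{i}^{k}+\log\bigl(\rho_{i}^{k}\tilde h_{i}^{k}/\int_{M}\tilde h_{i}^{k}e^{\tilde u_{i}^{k}}\mathrm{d}V_{g}\bigr)$, one has $\int_{M}e^{v_{i}^{k}}\mathrm{d}V_{g}=\rho_{i}^{k}\to\rho_{i}$. Splitting the integral into small disks around the points of $S'$ and the complement, passing to the limit gives a disk contribution $2\pi\sigma_{p,i}$ at each $p\in S'$, while the complement contributes either $0$ (if $\tilde u_{i}^{k}\to-\infty$ on $M\setminus S'$) or a limiting integral corresponding to a sub-system on $M$ with a strictly smaller blow-up set, whose total mass is itself quantized by an inductive application of the same argument. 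Writing $2\pi\sigma_{p,i}=2\pi\hat\sigma_{p,i}+8\pi m_{p,i}$ and collecting terms one obtains
$$\rho_{i}=2\pi\!\!\sum_{p\in S'\cap S}\!\!\hat\sigma_{p,i}+8\pi\,M_{i},\qquad M_{i}\in\mathbb{Z},$$
that is $\rho_{i}\in\Gamma_{i}$, contradicting the hypothesis.

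The hard part will be the last step, namely handling the situation in which some components concentrate at a point while others remain bounded, and checking that the resulting ``background'' contribution on $M\setminus S'$ is again an integer multiple of $8\pi$. This requires separating the components of $\tilde{\mathbf{u}}^{k}$ into concentrating and non-concentrating groups, observing that the non-concentrating subsystem reduces to a lower-rank Liouville or sinh-Gordon type system whose blow-up masses are quantized by a combination of Theorem \ref{CYL-Theorem-1} and the classical results cited in Remark \ref{CYL-Ch-1-Remark-2}. The remainder of the argument is a routine combination of the local blow-up classification of Theorem \ref{CYL-Theorem-1} with the standard compactness package for Liouville-type equations on compact Riemann surfaces.
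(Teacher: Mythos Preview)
Your overall strategy---contradiction, localization around blow-up points, invocation of Theorem~\ref{CYL-Theorem-1}, and then assembling the total mass---is exactly the paper's approach. The difference lies in the final step, and there you are making your life harder than necessary.

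The paper does not attempt to show that \emph{every} $\rho_i$ lies in $\Gamma_i$. Since the hypothesis is $\rho_i\notin\Gamma_i$ for \emph{all} $i$, it suffices to produce a single index $i_0$ with $\rho_{i_0}\in\Gamma_{i_0}$. The paper obtains this by appealing to the standard fact that at least one component possesses the full concentration property, i.e.\ there exists $i_0$ with $\tilde u_{i_0}^{k}\to -\infty$ uniformly on compact subsets of $M\setminus S'$. For that component the complement integral vanishes in the limit, and one gets directly
\[
\rho_{i_0}=\lim_{k\to\infty}\rho_{i_0}^k=2\pi\sum_{p\in S'}\sigma_{p,i_0}
=2\pi\sum_{p\in S'\cap S}\hat\sigma_{p,i_0}+8\pi M_{i_0}\in\Gamma_{i_0},
\]
where the contributions from $p\in S'\setminus S$ are absorbed into the integer $M_{i_0}$ because $\Gamma(1,1,1)\subset (4\mathbb{Z})^3$.

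By contrast, your ``hard part'' paragraph proposes to handle the components that do \emph{not} concentrate by arguing that the background integral over $M\setminus S'$ is itself quantized via ``an inductive application of the same argument''. This is both unnecessary and not clearly well-posed: the residual system on $M\setminus S'$ carries a limiting smooth solution, not a blow-up sequence, so there is no obvious induction and no reason its total mass should be an integer multiple of $8\pi$. Drop this detour; once you note that at least one component concentrates (which follows from the constraint $\tilde u_1^k+\tilde u_2^k+2\tilde u_3^k=0$ and the standard Brezis--Merle alternative), the proof closes immediately.
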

To prove Theorem \ref{CYL-Theorem-2}, we shall determine the local mass at each blow-up point of solutions $\mathbf{\tilde{u}}^k$. Here the blow up point $p$ is defined by
\begin{equation*}
\exists \mbox{~a sequence }p^k\to p~\mbox{such that }\max_i\tilde u_i^k(p^k)\to+\infty.
\end{equation*}
The set $B$ of all blow up points is called the blow up set. For each point $p\in B$ we define the local mass by
\begin{equation*}
\sigma_i(p)=\frac{1}{2\pi}\lim\limits_{r\to 0}\lim\limits_{k\to +\infty}\frac{\rho^k_{i}\int_{B(p,r)}\tilde h^k_ie^{\tilde u^k_i}\mathrm{d}V_{g}}{\int_{M}\tilde h^k_ie^{\tilde u^k_i}\mathrm{d}V_{g}},\quad i=1,2,3.
\end{equation*}
By Theorem \ref{CYL-Theorem-1} we are able to compute $\sigma_i(p)$. Together with the assumption $\rho_i\notin\Gamma_i$ we shall see that $B=\emptyset$ and it leads to the compactness of \eqref{1.16}.
\medskip

The organization of this article is as follows. In Section \ref{CYL-Section-2}, we establish a selection process of a sequence of blow up solutions, an oscillation estimate outside the blow-up set, and the local Pohozaev identities corresponding to the system \eqref{1.1}. In Section \ref{CYL-Section-3}, we study the Pohozaev identity of blow up solutions and classify all the possible values of $\Gamma(\bm{\mu})$. In Section \ref{CYL-Section-4}, we present several important lemmas for later use. In Section \ref{CYL-Section-5} and Section \ref{CYL-Section-6}, we discuss the local mass on each bubbling disk centered at $0$ and not at $0$ respectively, then we prove Theorem \ref{CYL-Theorem-1} by grouping these bubbling areas together. Furthermore, we sketch the proof of Theorem \ref{CYL-Theorem-2}. In Section \ref{CYL-Section-8}, we carry out the blow-up analysis for sinh-Gordon equation with singular source. In Section \ref{CYL-Section-7}, we collect some classification results which are used in our proof.

In this article, we always use the notation $r_k=o(1)s_k$ to express $r_k/s_k\to 0$ as $k\to\infty$, and the notation $r_k=O(1)s_k$ to represent $C^{-1}\leq r_k/s_k\leq C$ as $k\to\infty$ for some constant $C>0$. For the sake of brevity, we use boldface type for sequence and vectors, such as $\mathbf{s}=\{s_k\}$, $\mathbf{r}=\{r_k\}$, $\bm{\tau}=\{\tau_k\}$ and $\bm{\sigma}=(\sigma_1,\sigma_2,\sigma_3)$. We will not distinguish sequence and subsequence in this article, i.e., we shall still denote $\mathbf{u}^k$ the subsequence of itself if necessary.

\section{Bubbling analysis and Pohozaev identity}\label{CYL-Section-2}
\setcounter{equation}{0}
In this section, we analyze the bubbling areas by a standard selection procedure and establish some type of Pohozaev identity for local mass.
\begin{proposition}\label{CYL-Ch-2-Proposition-1}
Let $\mathbf{u}^{k}$ be a sequence of solutions of system \eqref{1.1} satisfying  \eqref{1.10}. Then there exists a sequence of finite points $\Sigma_{k}:=\{0,x^{k}_{1},\cdots,x^{k}_{m}\}$ (if $0$ is not a singular point, then $0$ can be deleted from $\Sigma_k$) and a sequence of positive numbers $l^{k}_{1},\cdots,l^{k}_{m}$ such that
\begin{enumerate}[(1)]
\item $x^{k}_{j}\to 0$ and $l^{k}_{j}\to 0$ as $k\to +\infty$, $l^{k}_{j}\leq\frac{1}{2}\mathrm{dist}(x^{k}_{j},\Sigma_{k}\setminus\{x^{k}_{j}\})$, $j=1,\cdots,m$. Furthermore, $B(x^{k}_{i},l_{i}^{k})\cap B(x^{k}_{j},l_{j}^{k})=\emptyset$ for $1\leq i,j\leq m,~i\neq j$.
\item $\max\limits_{i=1,2,3}u^k_{i}(x^{k}_{j})=\max\limits_{i=1,2,3}\max\limits_{B(x^k_{j},l^k_j)}u^k_i(x)\to+\infty$ as $k\to+\infty$, $j=1,\cdots,m$. Denote
\begin{equation*}
\varepsilon_{j}^{k}:=e^{-\frac{1}{2}\max\limits_{i=1,2,3}u^{k}_{i}(x^{k}_{j})},\quad j=1,\cdots,m.
\end{equation*}
Then $R^k_j:=\frac{l^{k}_{j}}{\varepsilon_{j}^{k}}\to+\infty$ as $k\to+\infty$, $j=1,\cdots,m$.
\item In each $B(x^{k}_{i},l_{i}^{k})$, set
\begin{equation*}
v^k_{i}(y):=u^k_i(x^k_j+\varepsilon_{j}^{k}y)+2\log\varepsilon_{j}^{k}, \quad i=1,2,3.
\end{equation*}
Then one of the following alternatives holds:
\begin{enumerate}
\item [(a)] $v^k_1$ and $v^k_3$ (or $v^k_2$ and $v^k_3$) converge to a solution of the following Toda system in $C_{\mathrm{loc}}^2(\mathbb{R}^2)$,
\begin{equation*}
\begin{cases}
-\Delta v =e^{v }-e^{w},\\
-\Delta w=-\frac{1}{2}e^{v}+e^{w},
\end{cases}
~~\text{in}\quad\mathbb{R}^2,
\end{equation*}
    while $v^k_2$ (or $v^k_1$) converges to $-\infty$ in $L_{\mathrm{loc}}^{\infty}(\mathbb{R}^2)$;
\item [(b)] $v^k_1$ and $v^k_2$ converge to solutions of Liouville equation in $C_{\mathrm{loc}}^2(\mathbb{R}^2)$, while $v^k_3$ converges to $-\infty$ in $L_{\mathrm{loc}}^{\infty}(\mathbb{R}^2)$;
\item [(c)] One component of $\mathbf{v}^k$ converges to a solution of Liouville equation in $C_{\mathrm{loc}}^2(\mathbb{R}^2)$, while the left ones converge to $-\infty$ in $L_{\mathrm{loc}}^{\infty}(\mathbb{R}^2)$.
\end{enumerate}
\item There exists a constant $C$ independent of $k$ such that the following Harnack-type inequality holds:
\begin{equation}\label{CYL-Ch-2-Eq-2}
\max\limits_{i=1,2,3}\left\{u^k_i(x)+2\log\mathrm{dist}(x,\Sigma_k)\right\}\leq C,\quad \forall x\in B_{1}(0).
\end{equation}
\end{enumerate}
\end{proposition}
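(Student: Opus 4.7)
The plan is to adapt the greedy bubble-selection procedure developed for Liouville equations by Brezis--Merle and Li--Shafrir, and for Toda systems with simple Lie algebras in \cite{Jost-Lin-Wang-2006,Lin-Wei-Zhang-2015,Lin-Yang-Zhong-2020}, to the affine $\mathbf{B}_{2}^{(1)}$ setting. The finite-energy hypothesis \eqref{1.10}(iii), combined with a Brezis--Merle-type positive lower bound for every surviving bubble, will force the selection to terminate in finitely many steps, while the classification of entire finite-energy solutions for the relevant rank-two Toda sub-system and the scalar Liouville equation will identify the three alternatives in step (3).

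First I would construct $\Sigma_{k}$ iteratively. Initialize $\Sigma_k^{(0)}=\{0\}$ (or $\emptyset$ if $0$ is not a singular point) and, at stage $s\ge 0$, examine
$M_k^{(s)}:=\max_i\sup_{x\in B_1(0)}\{u_i^k(x)+2\log\mathrm{dist}(x,\Sigma_k^{(s)})\}$,
with the singular source at $0$ handled by tracking $u_i^k(x)+2\gamma_i\log|x|$ in a neighborhood of $0$. If $M_k^{(s)}$ is bounded along a subsequence, stop: \eqref{CYL-Ch-2-Eq-2} then holds. Otherwise pick a near-maximizer $x_*^k$, which by \eqref{1.10}(i) must satisfy $x_*^k\to 0$, adjoin it to $\Sigma_k$, and choose $l_*^k$ as a suitable fraction of $\mathrm{dist}(x_*^k,\Sigma_k^{(s)})$, shrinking it if necessary to ensure disjointness of the bubbling disks. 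The definitions then force $\varepsilon_*^k\to 0$ and $l_*^k/\varepsilon_*^k\to +\infty$. Termination in finitely many steps follows because each newly extracted bubble contributes at least a definite positive amount to $\int e^{u_i^k}$ for some $i$, while \eqref{1.10}(iii) bounds the total mass.

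For step (3), the rescaled functions $v_i^k$ satisfy $\Delta v_i^k+\sum_j a_{ij}e^{v_j^k}=0$ on expanding balls $B_{R_j^k}(0)$ when $x_j^k\neq 0$; when $x_j^k\equiv 0$ the Dirac source persists at the origin with coefficient $4\pi\gamma_i$, producing the singular Liouville or Toda limit. Since $\max_i v_i^k(0)=0$ and Harnack control inside $B(x_j^k,l_j^k)$ is built into the selection, a Brezis--Merle dichotomy forces each $v_i^k$ either to converge in $C^2_{\mathrm{loc}}$ or to diverge to $-\infty$ locally uniformly. The structure of $A$ in \eqref{1.2} — rows $1$ and $2$ are coupled only to row $3$ and are mutually decoupled — restricts the possible subsets of surviving components to those giving precisely alternatives (a), (b), (c); the Chen--Li and Jost--Wang style classifications of entire finite-energy solutions then identify the limits. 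The Harnack bound \eqref{CYL-Ch-2-Eq-2} is, by design, the termination criterion of the selection.

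The main anticipated obstacle is the possibility of a secondary bubble forming inside an already-rescaled picture $\{v_i^k\}$, i.e.\ \eqref{CYL-Ch-2-Eq-2} failing at an intermediate scale within some $B(x_j^k,l_j^k)$. This is handled by iterating the selection on the rescaled systems; the crucial point is that each additional layer strictly decreases the number of blown-up components (because of the mutual decoupling of rows $1$ and $2$), so the chain of rescalings terminates within at most three layers. A secondary technical concern is to rule out clustering of the bubble centers $x_j^k$ around $0$ on a scale faster than $\varepsilon_j^k$, which follows from combining the quantization lower bound on each bubble with the global energy cap in \eqref{1.10}(iii) and the way $l_j^k$ is chosen in the construction.
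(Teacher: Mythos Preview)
Your overall strategy matches the paper's: both invoke the by-now standard greedy selection procedure (the paper simply cites \cite[Lemma~2.1]{Liu-Wang-2021} and remarks that the singular case starts with $\Sigma_k=\{0\}$). The iterative construction, finite-energy termination, and Brezis--Merle dichotomy you describe are all correct and in line with the references.

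There is one genuine gap in your reasoning for step~(3). You attribute the exclusion of the alternative ``all three components converge'' to the structure of $A$ (rows $1$ and $2$ being decoupled). That is not the reason. The structure of $A$ only explains why, in alternative~(b), $v_1$ and $v_2$ converge to two \emph{independent} Liouville equations rather than a coupled system; it does not by itself prevent all three $v_i^k$ from converging to an entire solution of the full affine system. What actually rules this out is the linear constraint $u_1^k+u_2^k+2u_3^k=0$ from \eqref{1.1}: after rescaling,
\[
v_1^k(y)+v_2^k(y)+2v_3^k(y)=\bigl(u_1^k+u_2^k+2u_3^k\bigr)(x_j^k+\varepsilon_j^k y)+8\log\varepsilon_j^k=8\log\varepsilon_j^k\to-\infty,
\]
so at least one $v_i^k$ must diverge to $-\infty$. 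The paper singles out exactly this observation as the one point requiring care beyond the cited reference. Your discussion of secondary bubbles and ``at most three layers because the number of surviving components decreases'' is also not how the argument runs; termination comes purely from the finite-energy bound and the uniform quantization of each bubble, not from a component-count decrement.
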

\begin{proof}
{We construct $\Sigma_k$ by induction. If $\gamma_i=0~\text{for}~i=1,2,3$, i.e., the system \eqref{1.1} has no singularity at $0$, we start with $\Sigma_k=\emptyset$ and the proof is in line with that of \cite[Lemma 2.1]{Liu-Wang-2021}. Otherwise, we start with $\Sigma_k=\{0\}$ and the proof is similar. But it should be mentioned that all components of ${\mathbf{v}}^k$ cannot be bounded at the same time by the fourth equation of \eqref{1.1}.}
\end{proof}

We have the following oscillation estimate generated by the Harnack-type inequality \eqref{CYL-Ch-2-Eq-2}.

\begin{proposition}\label{CYL-Ch-2-Proposition-2}
Let $\Sigma_{k}=\{0,x^{k}_{1},\cdots,x^{k}_{m}\}$ be the blow-up set (if $0$ is not a singular point, then $0$ can be deleted from $\Sigma_k$). Suppose that $\mathbf{u}^{k}$ is a sequence of solutions to system \eqref{1.1} satisfying \eqref{1.10}. Then for any $x_{0}\in B_{1}(0)\setminus\Sigma_{k}$, there exists a constant $C_{0}$ independent of $x_{0}$ and $k$ such that
\begin{equation*}
|u^{k}_{i}(x_{1})-u^{k}_{i}(x_{2})|\leq C_{0},~~\forall x_{1},x_{2}\in B(x_{0},\mathrm{dist}(x_{0},\Sigma_{k})/2),\quad i=1,2,3.
\end{equation*}
\end{proposition}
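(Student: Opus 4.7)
The plan is to rescale around $x_0$ and reduce the oscillation estimate to a regularity question for a normalized sequence of Toda solutions on a fixed-size ball disjoint from the blow-up set. Set $r_k:=\mathrm{dist}(x_0,\Sigma_k)$ and, for $y\in B_1(0)$, define
\[
\tilde u_i^k(y):=u_i^k(x_0+r_k y)+2\log r_k,\qquad i=1,2,3.
\]
Since $B_{r_k}(x_0)\cap\Sigma_k=\emptyset$, the Dirac source drops out and $\tilde{\mathbf u}^k$ solves the homogeneous Toda system $\Delta\tilde u_i^k+\sum_{j=1}^3a_{ij}e^{\tilde u_j^k}=0$ on $B_1(0)$. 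Applying \eqref{CYL-Ch-2-Eq-2} at $x=x_0+r_k y$ for $|y|\le 3/4$ (where $\mathrm{dist}(x,\Sigma_k)\ge r_k/4$) gives $\max_i\tilde u_i^k\le C$ uniformly on $\overline{B_{3/4}(0)}$, and a change of variables turns condition (iii) of \eqref{1.10} into $\int_{B_1(0)}e^{\tilde u_j^k}\,dy\le C$.

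\medskip
With $e^{\tilde u_j^k}$ uniformly bounded in $L^\infty(B_{3/4}(0))$, I would decompose $\tilde u_i^k=v_i^k+w_i^k$ on $B_{3/4}(0)$ where $\Delta v_i^k=-\sum_ja_{ij}e^{\tilde u_j^k}$ and $v_i^k=0$ on $\partial B_{3/4}(0)$, so that $\|v_i^k\|_{C^{1,\alpha}(B_{3/4})}\le C$ by standard elliptic theory. The harmonic remainder $w_i^k$ is bounded above, and the oscillation of $\tilde u_i^k$ on $\overline{B_{1/2}(0)}$ differs from $\mathrm{osc}_{B_{1/2}(0)}w_i^k$ by an $O(1)$ error, so the task reduces to bounding each $\mathrm{osc}_{B_{1/2}(0)}w_i^k$ uniformly in $k$.

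\medskip
For the harmonic triple I would apply the classical Harnack inequality to the non-negative harmonic functions $C-w_i^k$ on $B_{3/4}(0)$ and couple the three components through two system-level inputs: the affine relation $\tilde u_1^k+\tilde u_2^k+2\tilde u_3^k\equiv 8\log r_k$ obtained by rescaling the fourth equation of \eqref{1.1}, and the hypothesis that $x_0$ is not a blow-up point. If $\mathrm{osc}_{B_{1/2}(0)}w_i^k\to\infty$ along a subsequence, Harnack forces $\sup_{B_{1/2}(0)}w_i^k\to-\infty$ and $\inf_{B_{1/2}(0)}w_i^k$ to diverge comparably fast; combining this with the affine relation and the uniform upper bound $\tilde u_\ell^k\le C$ on the other two indices forces one of the $u_i^k$ to tend to $+\infty$ along a sequence of points converging to $x_0$, which contradicts $x_0\notin\Sigma_k$. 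Undoing the scaling yields the claim on $B(x_0,r_k/4)$, and extending to $B(x_0,r_k/2)$ by a routine finite covering by balls of the form $B(z,\mathrm{dist}(z,\Sigma_k)/4)$ gives $C_0$ independent of $x_0$ and $k$. The main obstacle is precisely this contradiction step: a harmonic function bounded only from above need not have uniformly bounded oscillation, and it is the combination of the linear relation $u_1^k+u_2^k+2u_3^k=0$ with the definition of $\Sigma_k$ as the full blow-up set that supplies the missing information and closes the Harnack-type argument.
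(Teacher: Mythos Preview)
Your rescaling and the decomposition $\tilde u_i^k=v_i^k+w_i^k$ are fine, and the Harnack inequality applied to $C-w_i^k$ does give
\[
\mathrm{osc}_{B_{1/2}}w_i^k\le (C_H-1)\bigl(C-\sup_{B_{1/2}}w_i^k\bigr).
\]
The gap is in the step where you try to show that $\sup_{B_{1/2}}w_i^k$ (equivalently $\sup_{B_{1/2}}\tilde u_i^k$) is bounded below. Your contradiction argument via the affine relation does not close: in the rescaled variables the relation reads $\tilde u_1^k+\tilde u_2^k+2\tilde u_3^k=8\log r_k$, and when $r_k\to 0$ the right-hand side tends to $-\infty$. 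So if $\sup_{B_{1/2}}\tilde u_1^k\to-\infty$, the affine relation only yields $\tilde u_2^k+2\tilde u_3^k=8\log r_k-\tilde u_1^k$, and the two divergent terms can cancel; there is no forced violation of $\tilde u_\ell^k\le C$. Phrased in the original variables, your conclusion ``some $u_j^k\to+\infty$ near $x_0$'' is \emph{not} contradicted by $x_0\notin\Sigma_k$: the Harnack-type bound \eqref{CYL-Ch-2-Eq-2} allows $u_j^k(x)$ to be as large as $-2\log r_k+O(1)$ near $x_0$, which indeed tends to $+\infty$ when $r_k\to0$. Thus the purely local argument cannot produce the needed lower bound precisely in the regime that matters, namely $x_0$ approaching $\Sigma_k$.

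The argument the paper cites (Lin--Wei--Zhang \cite{Lin-Wei-Zhang-2015} and Liu--Wang \cite{Liu-Wang-2021}) is global rather than local: one writes $u_i^k$ on $B_1(0)$ via the Green representation, uses the boundary oscillation bound (ii) of \eqref{1.10} to control the harmonic part on all of $B_1(0)$, and then estimates $\big|\log|x_1-y|-\log|x_2-y|\big|$ against $e^{u_j^k}$ by splitting the integral into the region near $\Sigma_k$ (where $|x_1-y|\sim|x_2-y|$ because $x_1,x_2\in B(x_0,r_k/2)$) and the region away from $\Sigma_k$ (where \eqref{CYL-Ch-2-Eq-2} gives $e^{u_j^k(y)}\le C\,\mathrm{dist}(y,\Sigma_k)^{-2}$). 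This uses the global $L^1$ bound (iii) and the global boundary data (ii), which is exactly the information your local rescaling discards. The affine constraint $u_1^k+u_2^k+2u_3^k=0$ plays no role in that proof.
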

\begin{proof}
One can prove this result by following the same arguments of \cite[Lemma 2.1]{Lin-Wei-Zhang-2015} and \cite[Lemma 2.2]{Liu-Wang-2021}, here we omit the details.
\end{proof}

Next we introduce the definitions of fast decay and slow decay.

\begin{definition}\label{CYL-Ch-2-Definition-3}
\begin{enumerate}[(i)]
\item We say $u^{k}_{i}$ has fast decay on $\partial B(x_{k},r_{k})$ if
\begin{equation*}
u^{k}_{i}(x)+2\log|x-x_{k}|\leq-N_{k},\quad\forall x\in \partial B(x_{k},r_{k}),
\end{equation*}
for some $N_{k}\to+\infty$ as $k\to+\infty$.
\item We say $u^{k}_{i}$ has slow decay on $\partial B(x_{k},r_{k})$ if
\begin{equation*}
u^{k}_{i}(x)+2\log|x-x_{k}|\geq -C, \quad\forall x\in \partial B(x_{k},r_{k}),
\end{equation*}
for some constant $C>0$ which is independent of $k$.
\end{enumerate}
\end{definition}

At the end of this section, we can establish a Pohozaev identity of local masses for the system \eqref{1.1}. In Proposition \ref{CYL-Ch-2-Proposition-5} and Remark \ref{CYL-Ch-2-Remark-6}, we denote
\begin{equation*}
\sigma(r,x_{0};u):=\frac{1}{2\pi}\int_{B_{r}(x_{0})}e^{u(x)}\mathrm{d}x,
~~\sigma_{i}^{k}(r,x_{0}):=\sigma(r,x_{0};u^{k}_{i}),
~~\sigma_{i}^{k}(r):=\sigma(r,0;u^{k}_{i}),\quad i=1,2,3.
\end{equation*}
Let $x^k_l\in\Sigma_k$ and $\tau^k_l=\frac{1}{2}\mathrm{dist}\left(x^k_l,\Sigma_k\setminus\{x^k_l\}\right)$, then we can derive from system \eqref{1.1} and Proposition \ref{CYL-Ch-2-Proposition-2} that
\begin{equation}\label{CYL-Ch-2-Eq-5}
u^k_i(x)=\overline{u}^k_{x^k_l,i}(r)+O(1),\quad x\in B(x^k_l,\tau^k_l),
\end{equation}
where $r=|x-x^k_l|$ and
\begin{equation*}
\overline{u}^k_{x^k_l,i}(r)=\frac{1}{2\pi r}\int_{\partial B(x^k_l,r)}u^k_i~\mathrm{d}S,
\end{equation*}
and $O(1)$ is independent of $r$ and $k$.

\begin{proposition}\label{CYL-Ch-2-Proposition-5}
Let $\Sigma_{k}^{\prime}\subseteq\Sigma_{k}$ be a subset of $\Sigma_{k}$ with $0\in\Sigma_{k}^{\prime}\subseteq B(x_{k},r_{k})\subseteq B_{1}(0)$, and there holds
\begin{equation*}
\mathrm{dist}(\Sigma_{k}^{\prime},\partial B(x_{k},r_{k}))=o(1)\mathrm{dist}(\Sigma_{k}\setminus\Sigma_{k}^{\prime},\partial B(x_{k},r_{k})).
\end{equation*}
Suppose $u^{k}_{1},u^{k}_{2},u^{k}_{3}$ have fast decay on $\partial B(x_{k},r_{k})$. Then we have the following Pohozaev identity
\begin{equation}\label{CYL-Ch-2-Eq-6}
\begin{aligned}
&\left(\sigma^{k}_1(r_{k},x_{k})-\sigma^{k}_3(r_{k},x_{k})-2\gamma_{1}\right)^2
+\left(\sigma^{k}_2(r_{k},x_{k})-\sigma^{k}_3(r_{k},x_{k})-2\gamma_{2}\right)^2\\
&=4\left(\sigma^{k}_1(r_{k},x_{k})+\sigma^{k}_2(r_{k},x_{k})+2\sigma^{k}_3(r_{k},x_{k})\right)
+4\left(\gamma_{1}^2+\gamma_{2}^2\right)+o(1).
\end{aligned}
\end{equation}
Furthermore, let $\mu_i=1+\gamma_i$ for $i=1,2,3$, then ${\bm{\sigma}}=(\sigma_1,\sigma_2,\sigma_3)$ satisfies the Pohozaev identity
\begin{equation}\label{CYL-Ch-2-Eq-Pohozaev-identity}
(\sigma_1-\sigma_3)^2+(\sigma_2-\sigma_3)^2=4(\mu_1\sigma_1+\mu_2\sigma_2+2\mu_3\sigma_3),
\end{equation}
where $\mu_1+\mu_2+2\mu_3=4$ and $\sigma_i=\lim\limits_{k\to+\infty}\sigma^k_i(r_k,x_k)$ for $i=1,2,3$.
\end{proposition}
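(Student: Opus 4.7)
My plan is to derive \eqref{CYL-Ch-2-Eq-6} by testing the first two equations of \eqref{1.1} against the Pohozaev multipliers $V\cdot\nabla u_1^k$ and $V\cdot\nabla u_2^k$ with $V=x-x_k$, handling the Dirac source at $0$ by excising a small ball $B_\epsilon(0)$ and letting $\epsilon\to 0$. The identity \eqref{CYL-Ch-2-Eq-Pohozaev-identity} will then be an algebraic consequence of \eqref{CYL-Ch-2-Eq-6} obtained by letting $k\to+\infty$, expanding the squares, and using the built-in constraint $\gamma_1+\gamma_2+2\gamma_3=0$.

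On the annulus $\Omega_\epsilon:=B_{r_k}(x_k)\setminus B_\epsilon(0)$, where the system is smooth, I apply the classical divergence-theorem Pohozaev identity to each product $(V\cdot\nabla u_i^k)\Delta u_i^k$, $i=1,2$, which produces boundary integrals on $\partial B_{r_k}(x_k)$ and $\partial B_\epsilon(0)$. Substituting $\Delta u_i^k=-e^{u_i^k}+e^{u_3^k}$ on $\Omega_\epsilon$ turns the bulk into $-\int(V\cdot\nabla u_i^k)e^{u_i^k}+\int(V\cdot\nabla u_i^k)e^{u_3^k}$. The self-interaction $\int(V\cdot\nabla u_i^k)e^{u_i^k}=\int V\cdot\nabla e^{u_i^k}$ reduces by the divergence theorem to $-2\int_{\Omega_\epsilon}e^{u_i^k}$ plus boundary; the outer boundary integral is $o(1)$ by fast decay and the inner one is $O(\epsilon^{2+2\gamma_i})=o(1)$ since $\gamma_i>-1$, so this piece contributes $-4\pi\sigma_i^k(r_k,x_k)+o(1)$. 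The cross terms for $i=1,2$ combine via $\nabla(u_1^k+u_2^k)=-2\nabla u_3^k$ (which is immediate from $u_1^k+u_2^k+2u_3^k=0$) into $-2\int V\cdot\nabla e^{u_3^k}$, which by the same calculation yields $+8\pi\sigma_3^k(r_k,x_k)+o(1)$. The inner Pohozaev boundary on $\partial B_\epsilon(0)$ is evaluated from the asymptotic $u_i^k=2\gamma_i\log|x|+O(1)$ near $0$: with $\nu=-x/\epsilon$ and $V=x-x_k$, the symmetry $\int_{\partial B_\epsilon}x\,dS=0$ kills the $x_k$-dependent terms that would otherwise cause trouble, and a direct computation gives the leading contribution $-4\pi\gamma_i^2+o(1)$ for each $i=1,2$, producing the $4(\gamma_1^2+\gamma_2^2)$ term.

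The remaining Pohozaev boundary on $\partial B_{r_k}(x_k)$ is treated using \eqref{CYL-Ch-2-Eq-5} together with the near-harmonicity of $u_i^k$ in an annular neighborhood of this circle (where the exponentials are very small by fast decay). The $O(1)$ oscillation control combined with interior elliptic gradient estimates shows that the non-radial contribution to the Pohozaev integrand is $o(1)$, so the boundary reduces to $\pi\bigl(r_k\partial_r\bar u_{x_k,i}^k(r_k)\bigr)^2+o(1)$; Gauss's theorem applied to the $i$-th equation over $B_{r_k}(x_k)$ gives $r_k\partial_r\bar u_{x_k,i}^k(r_k)=2\gamma_i-\sigma_i^k+\sigma_3^k$, so this boundary equals $\pi(\sigma_i^k-\sigma_3^k-2\gamma_i)^2+o(1)$. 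Assembling all four pieces and dividing by $\pi$ yields \eqref{CYL-Ch-2-Eq-6}. Letting $k\to+\infty$, expanding $(\sigma_i-\sigma_3-2\gamma_i)^2$, and using $\gamma_1+\gamma_2=-2\gamma_3$ to convert $-4(\gamma_1+\gamma_2)\sigma_3$ into $8\gamma_3\sigma_3$ consolidates the right-hand side into $4(\mu_1\sigma_1+\mu_2\sigma_2+2\mu_3\sigma_3)$ and gives \eqref{CYL-Ch-2-Eq-Pohozaev-identity}. I expect the main technical obstacle to be the outer-boundary analysis: establishing that the non-radial portion of the Pohozaev integrand is genuinely $o(1)$ requires the combined force of fast decay, the Harnack bound \eqref{CYL-Ch-2-Eq-2}, and interior elliptic estimates applied to the nearly-harmonic $u_i^k$ near $\partial B_{r_k}(x_k)$.
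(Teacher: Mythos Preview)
Your overall strategy matches the paper's: apply the Pohozaev identity to the first two equations on an annulus, exploit $u_1^k+u_2^k+2u_3^k=0$ to convert the cross term $\int e^{u_3^k}(V\cdot\nabla(u_1^k+u_2^k))$ into $-2\int V\cdot\nabla e^{u_3^k}$, and read off the quadratic identity from the boundary contributions. The paper, however, first reduces to $x_k=0$ (so the Pohozaev vector field $x$ is centered at the singularity) and then, rather than working on $\partial B_{r_k}$ directly, uses \cite[Lemma~2.4]{Liu-Wang-2021} to extend the fast decay to $\partial B_{R^k r_k}$ and performs the computation at the intermediate radius $\theta=\sqrt{R^k}\,r_k$. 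This buys a genuine annular buffer on both sides of the outer circle, on which the Green's-function gradient estimate of \cite[Lemma~4.1]{Lin-Zhang-2010} yields $\nabla u_i^k(x)=-\tfrac{x}{|x|^2}(\sigma_i^k-\sigma_3^k-2\gamma_i)+o(1)/|x|$; the same lemma gives $\nabla u_i^k(x)=-2\gamma_i x/|x|^2+o(1)$ near the origin.

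The place where your write-up is thin is precisely these gradient estimates. At the inner boundary you invoke only the function-level asymptotic $u_i^k=2\gamma_i\log|x|+O(1)$, but the Pohozaev boundary integrand is quadratic in $\nabla u_i^k$, and $O(1)$ control on $u_i^k$ does not by itself yield $o(1)$ control on the error in $\nabla u_i^k$; one really needs the Green's-representation argument. At the outer boundary your appeal to ``near-harmonicity plus interior elliptic estimates'' is the right intuition, but to make it rigorous you need room on both sides of $\partial B_{r_k}(x_k)$ to apply interior estimates at the correct scale, which is exactly what the enlargement $r_k\to\sqrt{R^k}\,r_k$ provides. With those two ingredients supplied, your derivation and the paper's coincide.
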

\begin{proof} Without loss of generality, we assume $x_k=0$. For fixed $\theta\in(0,1)$, let $\Omega=B_{\theta}(0)\setminus B_{\varepsilon}(0)$, where $\varepsilon>0$ is small enough. Then applying the standard Pohozaev identity for the first two equations of \eqref{1.1} on $\Omega$, we have
\begin{equation*}
\begin{aligned}
&\frac{1}{2}\int_{\partial\Omega}|\nabla u^k_{i}|^2(x\cdot\nu)\mathrm{d}S-\int_{\partial\Omega}(x\cdot\nabla u^k_{i})(\nu\cdot\nabla u^k_{i})\mathrm{d}S\\
&=-2\int_{\Omega}e^{u^k_i}\mathrm{d}x+\int_{\partial\Omega}e^{u^k_i}(x\cdot\nu)\mathrm{d}S
-\int_{\Omega}e^{u^k_3}(x\cdot\nabla u^k_i)\mathrm{d}x,\quad i=1,2,
\end{aligned}
\end{equation*}
where $\nu$ denotes the unit outer normal vector. Combining the above equations and the fourth equation of \eqref{1.1} we derive that
\begin{equation}\label{CYL-Ch-2-Eq-7}
\begin{aligned}
&\frac{1}{2}\int_{\partial\Omega}\left(|\nabla u^k_{1}|^2+|\nabla u^k_{2}|^2\right)(x\cdot\nu)\mathrm{d}S-\int_{\partial\Omega}\left((x\cdot\nabla u^k_{1})(\nu\cdot\nabla u^k_{1})+(x\cdot\nabla u^k_{2})(\nu\cdot\nabla u^k_{2})\right)\mathrm{d}S\\
&=-2\int_{\Omega}(e^{u^k_1}+e^{u^k_2})\mathrm{d}x+\int_{\partial\Omega}(e^{u^k_1}+e^{u^k_2})(x\cdot\nu)\mathrm{d}S
-\int_{\Omega}e^{u^k_3}\left(x\cdot(\nabla u^k_1+\nabla u^k_2)\right)\mathrm{d}x\\
&=-2\int_{\Omega}(e^{u^k_1}+e^{u^k_2})\mathrm{d}x+\int_{\partial\Omega}(e^{u^k_1}+e^{u^k_2})(x\cdot\nu)\mathrm{d}S
+2\int_{\Omega}e^{u^k_3}(x\cdot\nabla u^k_3)\mathrm{d}x\\
&=-2\int_{\Omega}(e^{u^k_1}+e^{u^k_2}+2e^{u^k_3})\mathrm{d}x+\int_{\partial\Omega}(e^{u^k_1}
+e^{u^k_2}+2e^{u^k_3})(x\cdot\nu)\mathrm{d}S.
\end{aligned}
\end{equation}

We calculate the L.H.S. and R.H.S. of \eqref{CYL-Ch-2-Eq-7} respectively. By (iii) of \eqref{1.10}, we obtain
\begin{equation*}
\begin{aligned}
\text{R.H.S.}
&=-2\int_{\Omega}(e^{u^k_1}+e^{u^k_2}+2e^{u^k_3})\mathrm{d}x
+\int_{\partial\Omega}(e^{u^k_1}+e^{u^k_2}+2e^{u^k_3})(x\cdot\nu)\mathrm{d}S\\
&=-2\int_{B_{\theta}(0)\setminus B_{\varepsilon}(0)}(e^{u^k_1}+e^{u^k_2}+2e^{u^k_3})\mathrm{d}x
+\theta\int_{\partial B_{\theta}(0)}(e^{u^k_1}+e^{u^k_2}+2e^{u^k_3})\mathrm{d}S\\
&\quad -\varepsilon\int_{\partial B_{\varepsilon}(0)}(e^{u^k_1}+e^{u^k_2}+2e^{u^k_3})\mathrm{d}S+o(1)\\
&=-4\pi(\sigma^k_1(\theta)+\sigma^k_2(\theta)+2\sigma^k_3(\theta))+\theta\int_{\partial B_{\theta}(0)}(e^{u^k_1}+e^{u^k_2}+2e^{u^k_3})\mathrm{d}S+o(1).
\end{aligned}
\end{equation*}
On the other hand, we have
\begin{equation*}
\begin{aligned}
\text{L.H.S.}
&=\frac{1}{2}\int_{\partial\Omega}\left(|\nabla u^k_{1}|^2+|\nabla u^k_{2}|^2\right)(x\cdot\nu)\mathrm{d}S-\int_{\partial\Omega}\left((x\cdot\nabla u^k_{1})(\nu\cdot\nabla u^k_{1})+(x\cdot\nabla u^k_{2})(\nu\cdot\nabla u^k_{2})\right)\mathrm{d}S\\
&=\frac{\theta}{2}\int_{\partial B_{\theta}(0)}\left(|\nabla u^k_{1}|^2+|\nabla u^k_{2}|^2\right)\mathrm{d}S-\frac{\varepsilon}{2}\int_{\partial B_{\varepsilon}(0)}(|\nabla u^k_{1}|^2+|\nabla u^k_{2}|^2)\mathrm{d}S\\
&\quad-\theta\int_{\partial B_{\theta}(0)}\left(\left(\nu\cdot\nabla{u^k_1}\right)^2+\left(\nu\cdot\nabla{ u^k_2}\right)^2\right)\mathrm{d}S+\varepsilon\int_{\partial B_{\varepsilon}(0)}\left(\left(\nu\cdot\nabla{ u^k_1}\right)^2+\left(\nu\cdot\nabla{u^k_2}\right)^2\right)\mathrm{d}S.
\end{aligned}
\end{equation*}
Applying the same argument of \cite[Lemma 4.1]{Lin-Zhang-2010}, we deduce the following estimate
\begin{equation}\label{CYL-Ch-2-Eq-8}
\nabla u^k_i(x)=-2\gamma_ix/|x|^2+o(1)~~\text{near the origin},
\end{equation}
which implies that
\begin{equation*}
\begin{aligned}
-\frac{\varepsilon}{2}\int_{\partial B_{\varepsilon}(0)}\left(|\nabla u^k_{1}|^2+|\nabla u^k_{2}|^2\right)\mathrm{d}S+\varepsilon\int_{\partial B_{\varepsilon}(0)}\left(\left(\nu\cdot\nabla{u^k_1}\right)^2+\left(\nu\cdot\nabla{u^k_2}\right)^2\right)\mathrm{d}S
=4\pi(\gamma_1^2+\gamma_2^2)+o(1).
\end{aligned}
\end{equation*}
Hence we can rewrite \eqref{CYL-Ch-2-Eq-7} as
\begin{equation}\label{CYL-Ch-2-Eq-9}
\begin{aligned}
&\frac{\theta}{2}\int_{\partial B_{\theta}(0)}(|\nabla u^k_{1}|^2+|\nabla u^k_{2}|^2)\mathrm{d}S-\theta\int_{\partial B_{\theta}(0)}\left(\left(\nu\cdot\nabla{u^k_1}\right)^2+\left(\nu\cdot\nabla{u^k_2}\right)^2\right)\mathrm{d}S+4\pi(\gamma_1^2+\gamma_2^2)\\
&=-4\pi\left(\sigma^k_1(\theta)+\sigma^k_2(\theta)+2\sigma^k_3(\theta)\right)+\theta\int_{\partial B_{\theta}(0)}(e^{u^k_1}+e^{u^k_2}+2e^{u^k_3})\mathrm{d}S+o(1).
\end{aligned}
\end{equation}

Since $u^k_1,u^k_2,u^k_3$ have fast decay on $\partial B(0,{r_k})$, i.e.,
\begin{equation*}
u^k_i(x)+2\log|x|\leq -N_{k},~|x|=r_k,\quad i=1,2,3
\end{equation*}
for some $N_k\to+\infty$, then by \cite[Lemma 2.4]{Liu-Wang-2021}, there exists $R^k_1\to+\infty$ such that
\begin{equation*}
R^k_1\mathrm{dist}(\Sigma_{k}^{\prime},\partial B(x_{k},r_{k}))=o(1)\mathrm{dist}(\Sigma_{k}\setminus\Sigma_{k}^{\prime},\partial B(x_{k},r_{k}))
\end{equation*}
and
\begin{equation*}
u^k_i(x)+2\log|x|\leq -N_{k},\quad r_k\leq|x|\leq R^k_1 r_k,
\end{equation*}
for some $N_k\to+\infty$. So we can choose $R^k\to+\infty$ with $R^k\leq R^k_1$ such that $u^k_1,u^k_2,u^k_3$ have fast decay on $\partial B(0,R^kr_k)$ and
\begin{equation*}
\int_{B(0,R^k r_{k})\setminus B(0,r_{k})}e^{u^k_{i}(x)}\mathrm{d}x\leq Ce^{-N_{k}}\log R_{k}=o(1),\quad i=1,2,3,
\end{equation*}
which implies that
\begin{equation*}
\sigma^k_i(R^k r_k)=\sigma^k_i(r_k)+o(1),\quad i=1,2,3.
\end{equation*}

Letting $\theta=\sqrt{R^k}r_k$, then we have
\begin{equation*}
-4\pi(\sigma^k_1(\theta)+\sigma^k_2(\theta)+2\sigma^k_3(\theta))=-4\pi(\sigma^k_1(r_k)+\sigma^k_2(r_k)
+2\sigma^k_3(r_k))+o(1)
\end{equation*}
and
\begin{equation*}
\theta\int_{\partial B_{\theta}(0)}\left(e^{u^k_1}+e^{u^k_2}+2e^{u^k_3}\right)\mathrm{d}S=o(1),
\end{equation*}
since $u^k_1,u^k_2,u^k_3$ have fast decay on $\partial B(0,\sqrt{R^k}r_k)$. Therefore, \eqref{CYL-Ch-2-Eq-9} can be rewritten as
\begin{equation}\label{CYL-Ch-2-Eq-10}
\begin{aligned}
&-\frac{\sqrt{R^k}r_{k}}{2}\int_{\partial B_{\sqrt{R^k}r_{k}}(0)}\left(|\nabla u^k_{1}|^2+|\nabla u^k_{2}|^2\right)\mathrm{d}S+\sqrt{R^k}r_{k}\int_{\partial B_{\sqrt{R^k}r_{k}}(0)}\left(\left(\nu\cdot\nabla{u^k_1}\right)^2+\left(\nu\cdot\nabla{u^k_2}\right)^2\right)\mathrm{d}S\\
&=4\pi\left(\sigma^k_1(r_k)+\sigma^k_2(r_k)+2\sigma^k_3(r_k)\right)+4\pi\left(\gamma_1^2+\gamma_2^2\right)+o(1).
\end{aligned}
\end{equation}
Similar to the estimate \eqref{CYL-Ch-2-Eq-8} (by \cite[Lemma 4.1]{Lin-Zhang-2010} and a scaling argument), we obtain
\begin{equation*}
  \nabla u^k_1(x)=-\frac{x}{|x|^2}(\sigma^k_1(r_k)-\sigma^k_3(r_k)-2\gamma_1)+\frac{o(1)}{|x|},~x\in\partial B_{\sqrt{R^k}r_k}(0),
\end{equation*}
and
\begin{equation*}
  \nabla u^k_2(x)=-\frac{x}{|x|^2}(\sigma^k_2(r_k)-\sigma^k_3(r_k)-2\gamma_2)+\frac{o(1)}{|x|},~x\in\partial B_{\sqrt{R^k}r_k}(0).
\end{equation*}
Therefore, we conclude from \eqref{CYL-Ch-2-Eq-10} that
\begin{equation*}
\left(\sigma^{k}_1(r_{k})
-\sigma^{k}_3(r_{k})-2\gamma_{1}\right)^2+\left(\sigma^{k}_2(r_{k})-\sigma^{k}_3(r_{k})-2\gamma_{2}\right)^2=
4\left(\sigma^{k}_1(r_{k})+\sigma^{k}_2(r_{k})+2\sigma^{k}_3(r_{k})\right)+4\left(\gamma_{1}^2+\gamma_{2}^2\right)+o(1).
\end{equation*}

Finally, taking $\mu_i=1+\gamma_i$ for $i=1,2,3$ and letting $k\to+\infty$, we can rewrite \eqref{CYL-Ch-2-Eq-6} as \eqref{CYL-Ch-2-Eq-Pohozaev-identity}. This completes the proof of Proposition \ref{CYL-Ch-2-Proposition-5}.
\end{proof}

\begin{rmk}\label{CYL-Ch-2-Remark-6}
Let $\Sigma_{k}^{\prime}\subseteq\Sigma_{k}$ be a subset of $\Sigma_{k}$ with $0\notin\Sigma_{k}^{\prime}\subseteq B(x_{k},r_{k})\subseteq B_{1}(0)$, and there holds
\begin{equation*}
\mathrm{dist}(\Sigma_{k}^{\prime},\partial B(x_{k},r_{k}))=o(1)\mathrm{dist}(\Sigma_{k}\setminus\Sigma_{k}^{\prime},\partial B(x_{k},r_{k})).
\end{equation*}
Suppose $u^{k}_{1},u^{k}_{2},u^{k}_{3}$ have fast decay on $\partial B(x_{k},r_{k})$. Then we have the following Pohozaev identity
\begin{equation*}
\begin{aligned}
\left(\sigma^{k}_1(r_{k},x_{k})
-\sigma^{k}_3(r_{k},x_{k})\right)^2
+\left(\sigma^{k}_2(r_{k},x_{k})-\sigma^{k}_3(r_{k},x_{k})\right)^2
=4\left(\sigma^{k}_1(r_{k},x_{k})+\sigma^{k}_2(r_{k},x_{k})+2\sigma^{k}_3(r_{k},x_{k})\right)+o(1).
\end{aligned}
\end{equation*}
In other words,
\begin{equation*}
(\sigma_1-\sigma_3)^2+(\sigma_2-\sigma_3)^2=4(\sigma_1+\sigma_2+2\sigma_3).
\end{equation*}
\end{rmk}

\section{The structure of $\Gamma(\mu_1,\mu_2,\mu_3)$ and affine Weyl group of $\mathbf{B}_{2}^{(1)}$}\label{CYL-Section-3}
\setcounter{equation}{0}
In this section, we shall compute all the possible local mass values for the blow up solutions to \eqref{1.1} by studying the set $\Gamma(\bm{\mu})$, which is introduced in the introduction (Section \ref{CYL-Section-1}). First of all, we prove that each element of $\Gamma(\bm{\mu})$ verifies the Pohozaev identity \eqref{CYL-Ch-2-Eq-Pohozaev-identity}.

\begin{proposition}\label{CYL-Ch-3-Proposition-1}
For each element $\bm{\sigma}\in\Gamma(\bm{\mu})$, $\bm{\sigma}$ satisfies the Pohozaev identity \eqref{CYL-Ch-2-Eq-Pohozaev-identity}.
\end{proposition}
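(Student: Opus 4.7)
The plan is to proceed by induction on the length of a shortest word in $\mathfrak{R}_1,\mathfrak{R}_2,\mathfrak{R}_3$ needed to produce $\bm{\sigma}$ from the base point $\bm{0}$. Introduce the Pohozaev defect
$$P(\bm{\sigma}):=(\sigma_1-\sigma_3)^2+(\sigma_2-\sigma_3)^2-4(\mu_1\sigma_1+\mu_2\sigma_2+2\mu_3\sigma_3),$$
so the claim becomes $P(\bm{\sigma})=0$ for every $\bm{\sigma}\in\Gamma(\bm{\mu})$. The base case $\bm{\sigma}=\bm{0}$ is immediate since both sides vanish. Thus it suffices to prove that each generator $\mathfrak{R}_i$ preserves $P$ itself, i.e.\ $P(\mathfrak{R}_i\bm{\sigma})=P(\bm{\sigma})$ as an identity in $(\bm{\sigma},\bm{\mu})$; the stronger invariance (rather than just preservation of the zero set) is what makes the induction go through cleanly.

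Next I would unpack the action of each $\mathfrak{R}_i$ using \eqref{1.2}. Writing $\tilde{\bm{\sigma}}=\mathfrak{R}_i\bm{\sigma}$, one has $\tilde\sigma_j=\sigma_j$ for $j\neq i$, and
$$\tilde\sigma_1=4\mu_1-\sigma_1+2\sigma_3,\qquad \tilde\sigma_2=4\mu_2-\sigma_2+2\sigma_3,\qquad \tilde\sigma_3=4\mu_3+\sigma_1+\sigma_2-\sigma_3,$$
corresponding to $i=1,2,3$ respectively. For $i=1$ the only changed term on the quadratic side is $(\tilde\sigma_1-\tilde\sigma_3)^2=(4\mu_1-(\sigma_1-\sigma_3))^2$, contributing $16\mu_1^2-8\mu_1(\sigma_1-\sigma_3)$ beyond $(\sigma_1-\sigma_3)^2$; on the linear side $4\mu_1\tilde\sigma_1-4\mu_1\sigma_1=16\mu_1^2-8\mu_1\sigma_1+8\mu_1\sigma_3$. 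These extras cancel identically, giving $P(\mathfrak{R}_1\bm{\sigma})=P(\bm{\sigma})$. The case $i=2$ is the same after swapping the indices $1\leftrightarrow 2$, since the Cartan matrix in \eqref{1.2} is symmetric in the relevant entries. For $i=3$ a direct expansion yields
$$(\tilde\sigma_1-\tilde\sigma_3)^2+(\tilde\sigma_2-\tilde\sigma_3)^2-(\sigma_1-\sigma_3)^2-(\sigma_2-\sigma_3)^2=32\mu_3^2+8\mu_3(\sigma_1+\sigma_2-2\sigma_3),$$
which matches $8\mu_3(\tilde\sigma_3-\sigma_3)=32\mu_3^2+8\mu_3(\sigma_1+\sigma_2-2\sigma_3)$ term by term, so $P(\mathfrak{R}_3\bm{\sigma})=P(\bm{\sigma})$.

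The heart of the argument is thus a purely algebraic verification that each generator acts as an affine isometry for the quadratic form implicit in $P$; no deeper input from the affine Weyl group structure is needed at this step (and in particular the constraint $\mu_1+\mu_2+2\mu_3=4$ is not used). The only real obstacle is keeping the bookkeeping straight across the three cases, which is controlled by the $1\leftrightarrow 2$ symmetry and by expressing the computation in terms of the differences $\sigma_i-\sigma_3$ from the outset. A minor sanity check along the way is that $\mathfrak{R}_i^2=\mathfrak{I}$ is directly visible from the explicit formulas above, consistent with the relation $|\mathfrak{R}_i|=2$ in \eqref{1.9}. Once the invariance of $P$ under each generator is in place, the proposition follows by induction on word length directly from the inductive definition of $\Gamma(\bm{\mu})$.
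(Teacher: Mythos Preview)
Your proof is correct and follows essentially the same approach as the paper: induction on word length, verifying that each generator $\mathfrak{R}_i$ preserves the Pohozaev identity. The paper packages the invariance check more succinctly via Vieta's theorem---observing that $(\mathfrak{R}_i\bm{\sigma})_i$ and $\sigma_i$ are precisely the two roots of $P(\bm{\sigma})=0$ viewed as a quadratic in the $i$-th coordinate with the others fixed---which is exactly the computation you carried out by hand.
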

\begin{proof}
One can get it easily by Vieta's theorem and Pohozaev identity \eqref{CYL-Ch-2-Eq-Pohozaev-identity}.
\end{proof}

As indicated by the setting of $G$, there might have several ways defining the same elements in $\Gamma(\bm{\mu})$. In order to represent each element in the cheapest way we  introduce the following concept
\begin{definition}\label{CYL-Ch-3-Definition-2}
We say $\widetilde{\mathfrak{R}}=\mathfrak{R}_{i_{n}}\mathfrak{R}_{i_{n-1}}\cdots\mathfrak{R}_{i_{1}}$ ($i_{a}\in\{1,2,3\},~1\leq a\leq n$) is a \textbf{simplest chain} with length $n$ if $\widetilde{\mathfrak{R}}$ can not be reduced to a sub-chain $\mathfrak{R}_{j_{m}}\mathfrak{R}_{j_{m-1}}\cdots\mathfrak{R}_{j_{1}}$ for any $m<n$, $j_{a}\in\{1,2,3\},~1\leq a\leq m$.
\end{definition}

\begin{rmk}\label{CYL-Ch-3-Remark-4}
For example, $\mathfrak{R}_3\mathfrak{R}_1\mathfrak{R}_2\mathfrak{R}_1$ is not a simplest chain, while $\mathfrak{R}_3\mathfrak{R}_1\mathfrak{R}_2\mathfrak{R}_3$ is a simplest chain with length $4$.
\end{rmk}

\begin{proposition}\label{CYL-Ch-3-Proposition-3}
For any $\bm{\sigma}=(\sigma_1,\sigma_2,\sigma_3)\in\Gamma(\bm{\mu})$, $\sigma_i=4\sum_{j=1}^{3}n_{ij}\mu_j$ for some $n_{ij}\in\mathbb{N}\cup\{0\}$.
\end{proposition}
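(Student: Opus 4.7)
The plan is to prove the claim by induction on the length of a simplest chain representing $\bm{\sigma}$ (in the sense of Definition \ref{CYL-Ch-3-Definition-2}). The base case $\bm{\sigma}=\mathbf{0}$ is trivial with $n_{ij}=0$ for all $i,j$. For the inductive step, suppose $\bm{\sigma}'=\mathfrak{R}_i\bm{\sigma}$, where $\bm{\sigma}$ admits a simplest chain of length $n-1$ and, by hypothesis, $\sigma_j=4\sum_k n_{jk}\mu_k$ with $n_{jk}\in\mathbb{N}\cup\{0\}$. For $j\neq i$, $\sigma'_j=\sigma_j$, so the representation is preserved. The nontrivial part is the $i$-th component
\begin{equation*}
\sigma'_i=4\mu_i+\sigma_i-2\sum_{k=1}^{3}a_{ik}\sigma_k.
\end{equation*}
Using the matrix \eqref{1.2}, this reduces to $\sigma'_1=4\mu_1-\sigma_1+2\sigma_3$, $\sigma'_2=4\mu_2-\sigma_2+2\sigma_3$, and $\sigma'_3=4\mu_3+\sigma_1+\sigma_2-\sigma_3$.

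The main obstacle is the presence of the $-\sigma_i$ term, which means the inductive hypothesis alone does not guarantee non-negative coefficients. To handle this, I would exploit the simplest-chain structure: since $\mathfrak{R}_i\bm{\sigma}$ extends the simplest chain of $\bm{\sigma}$, the relation $\mathfrak{R}_i^2=\mathfrak{I}$ forces the last generator producing $\bm{\sigma}$, call it $\mathfrak{R}_{i_{n-1}}$, to satisfy $i_{n-1}\neq i$. Writing $\bm{\sigma}=\mathfrak{R}_{i_{n-1}}\tilde{\bm{\sigma}}$ with $\tilde{\bm{\sigma}}$ of strictly shorter length, I substitute the explicit formula for $\mathfrak{R}_{i_{n-1}}$ into the expression for $\sigma'_i$. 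For instance, if $i=1$ and $\bm{\sigma}=\mathfrak{R}_3\tilde{\bm{\sigma}}$, then substituting $\sigma_1=\tilde\sigma_1$ and $\sigma_3=4\mu_3+\tilde\sigma_1+\tilde\sigma_2-\tilde\sigma_3$ yields $\sigma'_1=4\mu_1+8\mu_3+\tilde\sigma_1+2\tilde\sigma_2-2\tilde\sigma_3$. When the remaining negative term $-2\tilde\sigma_3$ is still problematic, one iterates the substitution on $\tilde{\bm{\sigma}}$.

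The hard part will be to control the depth of this recursive unfolding. The braid relations $(\mathfrak{R}_1\mathfrak{R}_3)^2=(\mathfrak{R}_3\mathfrak{R}_1)^2$, $(\mathfrak{R}_2\mathfrak{R}_3)^2=(\mathfrak{R}_3\mathfrak{R}_2)^2$ and the commutativity $\mathfrak{R}_1\mathfrak{R}_2=\mathfrak{R}_2\mathfrak{R}_1$ from \eqref{1.9} imply that every simplest chain in $G$ has a canonical alternating form in each of the pairs $\{\mathfrak{R}_1,\mathfrak{R}_3\}$ and $\{\mathfrak{R}_2,\mathfrak{R}_3\}$ of length at most $4$. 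Consequently, unfolding at most three layers of substitution backwards along the chain is sufficient to absorb every $-\sigma$-term into positive contributions $4\mu_i$ accumulated at earlier nodes, yielding a manifestly non-negative integer expression for the new coefficients $n'_{ik}$. I would organize the verification into a finite table indexed by the last two (or at most three) generators of the chain and check non-negativity case by case; the commutativity $\mathfrak{R}_1\mathfrak{R}_2=\mathfrak{R}_2\mathfrak{R}_1$ substantially cuts down the number of distinct cases. Once this bookkeeping is complete, the inductive step closes and the proposition follows.
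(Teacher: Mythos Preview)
Your inductive framework is sound, but the heart of your argument---the claim that ``unfolding at most three layers of substitution backwards along the chain is sufficient to absorb every $-\sigma$-term''---does not hold. The braid relations in \eqref{1.9} do bound the length of a pure alternating $\{\mathfrak{R}_1,\mathfrak{R}_3\}$ or $\{\mathfrak{R}_2,\mathfrak{R}_3\}$ segment by~$4$, but the affine Weyl group $\mathbf{B}_2^{(1)}$ is infinite precisely because simplest chains can weave through all three generators without bound. Take for instance a chain ending $\cdots\mathfrak{R}_1\mathfrak{R}_3\mathfrak{R}_2\mathfrak{R}_3$ and apply $\mathfrak{R}_1$. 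Following your substitutions step by step one finds, after four layers of unfolding, a residual term $-3\breve\sigma_1$ whose negative coefficient has \emph{grown} rather than been absorbed. There is no finite lookback depth, uniform over all simplest chains, that guarantees termination of this recursion; the case analysis you propose would in fact be infinite.

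The paper circumvents this entirely with an algebraic device you are missing: every $\bm{\sigma}\in\Gamma(\bm{\mu})$ satisfies the Pohozaev identity \eqref{CYL-Ch-2-Eq-Pohozaev-identity} (Proposition~\ref{CYL-Ch-3-Proposition-1}). Substituting $\sigma_i=4\sum_j n_{ij}\mu_j$ into this identity and comparing coefficients of $\mu_j^2$ produces, for each $j$, a quadratic relation among $n_{1j},n_{2j},n_{3j}$. One then observes that when $\mathfrak{R}_i$ is applied, the new coefficient $n'_{ij}$ is the \emph{other} root of a monic quadratic whose known root is the old $n_{ij}$; Vieta's formula shows the product of the two roots is a manifestly non-negative integer (a sum of squares, possibly minus a linear term that can be bounded using integrality and parity), forcing $n'_{ij}\ge 0$. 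This one-step argument requires no backward unfolding at all, and the only case work is a short check that the constant terms of these quadratics are non-negative. The Pohozaev constraint is the key idea you should incorporate.
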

\begin{proof}
By induction, one can easily verify that $\sigma_i=4\sum_{j=1}^{3}n_{ij}\mu_j$ is a polynomial of $\mu_j$ with $n_{ij}\in\mathbb{N}\cup\{0\}$. In fact, direct computation shows that the first $3$ levels of $\Gamma(\bm{\mu})$ are given as
\begin{center}
\begin{tikzpicture}[>=stealth,sloped]
\matrix(tree)[matrix of nodes,minimum size=1cm,column sep=3.5cm,row sep=0.3cm,]
{
          &                & $\cancel{(0,0,0)}$               \\
          & $(4\mu_1,0,0)$ & $(4\mu_1,4\mu_2,0)\cdots$      \\
          &                & $(4\mu_1,0,4\mu_1+4\mu_3)\cdots$ \\
          &                & $(4\mu_1,4\mu_2,0)\cdots$  \\
 $\bm{0}$ & $(0,4\mu_2,0)$ & $\cancel{(0,0,0)}$\\
          &                & $(0,4\mu_2,4\mu_2+4\mu_3)\cdots$\\
          &                & $(4\mu_1+8\mu_3,0,4\mu_3)\cdots$\\
          & $(0,0,4\mu_3)$ & $(0,4\mu_2+8\mu_3,4\mu_3)\cdots$\\
          &                & $\cancel{(0,0,0)}$\\
$1$st level & $2$nd level &     $3$rd level~$\cdots$ \\
};
\draw[->](tree-5-1)--(tree-2-2)node[midway,above]{$\mathfrak{R}_1$};
\draw[->](tree-5-1)--(tree-5-2)node[midway,above]{$\mathfrak{R}_2$};
\draw[->](tree-5-1)--(tree-8-2)node[midway,below]{$\mathfrak{R}_3$};
\draw[->](tree-2-2)--(tree-1-3)node[midway,above]{$\mathfrak{R}_1$};
\draw[->](tree-2-2)--(tree-2-3)node[midway,above]{$\mathfrak{R}_2$};
\draw[->](tree-2-2)--(tree-3-3)node[midway,below]{$\mathfrak{R}_3$};
\draw[->](tree-5-2)--(tree-4-3)node[midway,above]{$\mathfrak{R}_1$};
\draw[->](tree-5-2)--(tree-5-3)node[midway,above]{$\mathfrak{R}_2$};
\draw[->](tree-5-2)--(tree-6-3)node[midway,below]{$\mathfrak{R}_3$};
\draw[->](tree-8-2)--(tree-7-3)node[midway,above]{$\mathfrak{R}_1$};
\draw[->](tree-8-2)--(tree-8-3)node[midway,above]{$\mathfrak{R}_2$};
\draw[->](tree-8-2)--(tree-9-3)node[midway,below]{$\mathfrak{R}_3$};
\end{tikzpicture}
\end{center}
For any $\bm{\sigma}\in\Gamma(\bm{\mu})$, there exists a simplest chain $\widetilde{\mathfrak{R}}=\mathfrak{R}_{i_{n}}\mathfrak{R}_{i_{n-1}}\cdots\mathfrak{R}_{i_{1}}$ with length $n$ such that $\bm{\sigma}=\widetilde{\mathfrak{R}}(\bm{0})$. In this case, we say that $\bm{\sigma}$ is in the $n$-th level. Now the conclusion is true for those $\bm{\sigma}$ in the $\mathcal{L}$-th level, $1\leq \mathcal{L}\leq 3$. By induction, we suppose that the conclusion holds for all $\bm{\sigma}^{(L)}$ in the $L$-th level, then we shall prove it also holds in the $(L+1)$-th level. Let $\bm{\sigma}^{(L)}=(\sigma_1^{(L)},\sigma_2^{(L)},\sigma_3^{(L)})$, where $\sigma_i^{(L)}=4\sum_{j=1}^{3}n_{ij}^{(L)}\mu_j$, $n_{ij}^{(L)}\in\mathbb{N}\cup\{0\}$. Then
\begin{equation*}
\left(\mathfrak{R}_1\bm{\sigma}^{(L)}\right)_{1}=\left(4-4n_{11}^{(L)}+8n_{31}^{(L)}\right)\mu_1
+\left(-4n_{12}^{(L)}+8n_{32}^{(L)}\right)\mu_2+\left(-4n_{13}^{(L)}+8n_{33}^{(L)}\right)\mu_3,
\end{equation*}
\begin{equation*}
\left(\mathfrak{R}_2\bm{\sigma}^{(L)}\right)_{2}=
\left(-4n_{21}^{(L)}+8n_{31}^{(L)}\right)\mu_1+\left(4-4n_{22}^{(L)}
+8n_{32}^{(L)}\right)\mu_2+\left(-4n_{23}^{(L)}+8n_{33}^{(L)}\right)\mu_3,
\end{equation*}
and
\begin{equation*}
\left(\mathfrak{R}_3\bm{\sigma}^{(L)}\right)_{3}=
\left(4n_{11}^{(L)}+4n_{21}^{(L)}-4n_{31}^{(L)}\right)\mu_1
+\left(4n_{12}^{(L)}+4n_{22}^{(L)}-4n_{32}^{(L)}\right)\mu_2
+\left(4+4n_{13}^{(L)}+4n_{23}^{(L)}-4n_{33}^{(L)}\right)\mu_3.
\end{equation*}
Obviously, the coefficients of $\mu_i$ in the above components are multiples of $4$. Thus we shall prove that these coefficients are nonnegative. We divide the proof into two steps as follows.
\medskip

\noindent\textbf{Step 1.} We claim
\begin{equation}\label{CYL-Ch-3-Eq-6}
4-4n_{11}^{(L)}+8n_{31}^{(L)}\geq 0, \quad -4n_{12}^{(L)}+8n_{32}^{(L)}\geq 0, \quad -4n_{13}^{(L)}+8n_{33}^{(L)}\geq 0,
\end{equation}
and
\begin{equation}\label{CYL-Ch-3-Eq-23}
-4n_{21}^{(L)}+8n_{31}^{(L)}\geq 0, \quad 4-4n_{22}^{(L)}+8n_{32}^{(L)}\geq 0, \quad -4n_{23}^{(L)}+8n_{33}^{(L)}\geq 0.
\end{equation}
Since the proof for \eqref{CYL-Ch-3-Eq-6} and \eqref{CYL-Ch-3-Eq-23} are almost the same, we shall only give details for the first one. Using \eqref{CYL-Ch-2-Eq-Pohozaev-identity} and substituting the expression of $\bm{\sigma}^{(L)}$ (or $\mathfrak{R}_1\bm{\sigma}^{(L)}$) into \eqref{CYL-Ch-2-Eq-Pohozaev-identity}, we can obtain the following three equations by comparing the coefficients of $\mu_{j}^2,~j=1,2,3$,
\begin{align}
\left(n_{11}^{(L)}-n_{31}^{(L)}\right)^2+\left(n_{21}^{(L)}-n_{31}^{(L)}\right)^{2}&=n_{11}^{(L)},
\label{CYL-Ch-3-Eq-7}\\
\left(n_{12}^{(L)}-n_{32}^{(L)}\right)^2+\left(n_{22}^{(L)}-n_{32}^{(L)}\right)^{2}&=n_{22}^{(L)},
\label{CYL-Ch-3-Eq-8}\\
\left(n_{13}^{(L)}-n_{33}^{(L)}\right)^2+\left(n_{23}^{(L)}-n_{33}^{(L)}\right)^{2}&=2n_{33}^{(L)}.
\label{CYL-Ch-3-Eq-9}
\end{align}
By \eqref{CYL-Ch-3-Eq-7}, we can regard $n_{11}^{(L)}$ as a solution of the following quadratic equation
\begin{equation}\label{CYL-Ch-3-Eq-10}
x^2-\left(2n_{31}^{(L)}+1\right)x+\left(n_{31}^{(L)}\right)^2+\left(n_{21}^{(L)}-n_{31}^{(L)}\right)^{2}=0.
\end{equation}
Since $\left(n_{31}^{(L)}\right)^2+\left(n_{21}^{(L)}-n_{31}^{(L)}\right)^{2}\geq 0$, then the other solution of \eqref{CYL-Ch-3-Eq-10} satisfies
\begin{equation*}
2n_{31}^{(L)}+1-n_{11}^{(L)}\geq 0~~\Longleftrightarrow~~4-4n_{11}^{(L)}+8n_{31}^{(L)}\geq 0.
\end{equation*}
In a similar manner, by \eqref{CYL-Ch-3-Eq-8} we can view $n_{12}^{(L)}$ as a solution of the following quadratic equation
\begin{equation}\label{CYL-Ch-3-Eq-11}
x^2-2n_{32}^{(L)}x+\left(n_{32}^{(L)}\right)^2+\left(n_{22}^{(L)}-n_{32}^{(L)}\right)^{2}-n_{22}^{(L)}=0.
\end{equation}
It is not difficult to check that
\begin{equation*}
\left(n_{32}^{(L)}\right)^2+\left(n_{22}^{(L)}-n_{32}^{(L)}\right)^{2}-n_{22}^{(L)}\geq 0.
\end{equation*}
Hence, the other solution of \eqref{CYL-Ch-3-Eq-11} satisfies
\begin{equation*}
2n_{32}^{(L)}-n_{12}^{(L)}\geq 0~~\Longleftrightarrow~~-4n_{12}^{(L)}+8n_{32}^{(L)}\geq 0.
\end{equation*}
As $n_{11}^{(L)}$ and $n_{12}^{(L)}$, by \eqref{CYL-Ch-3-Eq-9} we can treat $n_{13}^{(L)}$ as a solution of the following quadratic equation
\begin{equation}\label{CYL-Ch-3-Eq-12}
x^2-2n_{33}^{(L)}x+\left(n_{33}^{(L)}\right)^2+\left(n_{23}^{(L)}-n_{33}^{(L)}\right)^{2}-2n_{33}^{(L)}=0.
\end{equation}
Notice that if
\begin{equation}\label{CYL-Ch-3-Eq-24}
\left(n_{33}^{(L)}\right)^2+\left(n_{23}^{(L)}-n_{33}^{(L)}\right)^{2}-2n_{33}^{(L)}\geq 0,
\end{equation}
then the other solution of \eqref{CYL-Ch-3-Eq-12} satisfies
\begin{equation*}
2n_{33}^{(L)}-n_{13}^{(L)}\geq 0~~\Longleftrightarrow~~-4n_{13}^{(L)}+8n_{33}^{(L)}\geq 0.
\end{equation*}
Therefore, \eqref{CYL-Ch-3-Eq-6} is proved. So it remains to verify \eqref{CYL-Ch-3-Eq-24}. For the cases of $n_{33}^{(L)}\geq 2$ and $n_{33}^{(L)}=0$, it is trivial. While if $n_{33}^{(L)}=1$, we must have $\left(n_{23}^{(L)}-1\right)^{2}\geq 1$. Indeed, comparing the coefficients of $\mu_{2}\mu_{3}$ in the Pohozaev identity, we find $n_{23}^{(L)}$ is even. Hence \eqref{CYL-Ch-3-Eq-6} is proved.
\medskip

\noindent\textbf{Step 2.} We claim that
\begin{equation}\label{CYL-Ch-3-Eq-13}
4n_{11}^{(L)}+4n_{21}^{(L)}-4n_{31}^{(L)}\geq 0,~4n_{12}^{(L)}+4n_{22}^{(L)}-4n_{32}^{(L)}\geq 0,~4+4n_{13}^{(L)}+4n_{23}^{(L)}-4n_{33}^{(L)}\geq 0.
\end{equation}
As {\bf Step 1}, we can apply \eqref{CYL-Ch-2-Eq-Pohozaev-identity} to get that $n_{3i}^{(L)},~i=1,2,3$ are the solution to the following three equations respectively
\begin{equation*}
\left\{
\begin{array}{ll}
2x^2-2\left(n_{11}^{(L)}+n_{21}^{(L)}\right)x+\left(n_{11}^{(L)}\right)^2
+\left(n_{21}^{(L)}\right)^{2}-n_{11}^{(L)}=0,\\
2x^2-2\left(n_{12}^{(L)}+n_{22}^{(L)}\right)x+\left(n_{12}^{(L)}\right)^2
+\left(n_{22}^{(L)}\right)^{2}-n_{22}^{(L)}=0,\\
2x^2-2\left(n_{13}^{(L)}+n_{23}^{(L)}+1\right)x+\left(n_{13}^{(L)}\right)^2
+\left(n_{23}^{(L)}\right)^{2}=0.
\end{array}\right.
\end{equation*}
Notice that
\begin{equation*}
\left(n_{11}^{(L)}\right)^2+\left(n_{21}^{(L)}\right)^{2}-n_{11}^{(L)}\geq 0,~\left(n_{12}^{(L)}\right)^2+\left(n_{22}^{(L)}\right)^{2}-n_{22}^{(L)}\geq 0,~\left(n_{13}^{(L)}\right)^2+\left(n_{23}^{(L)}\right)^{2}\geq 0,
\end{equation*}
then by the same argument of \textbf{Step 1}, we conclude that
\begin{equation*}
n_{11}^{(L)}+n_{21}^{(L)}-n_{31}^{(L)}\geq 0,~n_{12}^{(L)}+n_{22}^{(L)}-n_{32}^{(L)}\geq 0,~1+n_{13}^{(L)}+n_{23}^{(L)}-n_{33}^{(L)}\geq 0.
\end{equation*}
Hence \eqref{CYL-Ch-3-Eq-13} is proved and we finish the proof.
\end{proof}

\begin{proposition}\label{CYL-Ch-3-Proposition-5}
Let $\Gamma_{N}(\bm{\mu})$ be defined as
\begin{equation}\label{CYL-Ch-3-Eq-5}
\Gamma_{N}(\bm{\mu})=\left\{\bm{\sigma} ~\big|~ \bm{\sigma}~\text{satisfies the Pohozaev identity~ \eqref{CYL-Ch-2-Eq-Pohozaev-identity}},
~\sigma_{i}=4\sum\limits_{j=1}^{3}n_{ij}\mu_{j},~n_{ij}\in\mathbb{N}\cup\{0\},~i=1,2,3\right\}.
\end{equation}
Then
\begin{equation*}
\Gamma_{N}(\bm{\mu})=\Gamma(\bm{\mu}).
\end{equation*}
\end{proposition}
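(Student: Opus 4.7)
The strategy is to prove the two containments separately. The direction $\Gamma(\bm{\mu})\subseteq\Gamma_N(\bm{\mu})$ is a direct consequence of what has already been established: Proposition~\ref{CYL-Ch-3-Proposition-1} ensures that every $\bm{\sigma}\in\Gamma(\bm{\mu})$ satisfies the Pohozaev identity \eqref{CYL-Ch-2-Eq-Pohozaev-identity}, while Proposition~\ref{CYL-Ch-3-Proposition-3} guarantees the required expansion $\sigma_i=4\sum_j n_{ij}\mu_j$ with $n_{ij}\in\mathbb{N}\cup\{0\}$.

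For the reverse inclusion I would argue by descent. A direct computation shows that each generator $\mathfrak{R}_i$ is a reflection preserving the quadratic form in \eqref{CYL-Ch-2-Eq-Pohozaev-identity}; moreover, the nonnegativity arguments in Steps~1--2 of the proof of Proposition~\ref{CYL-Ch-3-Proposition-3} used only that $\bm{\sigma}$ satisfies the Pohozaev identity together with nonnegative integer coefficients. Hence $\Gamma_N(\bm{\mu})$ is closed under the action of $G$. It therefore suffices to show that every $\bm{\sigma}\in\Gamma_N(\bm{\mu})$ can be reduced to $\bm{0}$ by a finite product of generators, since the involutivity $\mathfrak{R}_i^2=\mathfrak{I}$ will then reverse the chain and place $\bm{\sigma}$ in $\Gamma(\bm{\mu})$.

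Concretely, introduce the height $h(\bm{\sigma}):=n_{11}+n_{22}+n_{33}$ and induct on $h$. The base case $h=0$ forces $n_{11}=n_{22}=n_{33}=0$, and feeding this into the Pohozaev relations \eqref{CYL-Ch-3-Eq-7}--\eqref{CYL-Ch-3-Eq-9} (each a sum of two nonnegative squares equal to zero) collapses every remaining $n_{ij}$ to zero, so $\bm{\sigma}=\bm{0}\in\Gamma(\bm{\mu})$. For the induction step, the height changes under the three generators are
\begin{equation*}
\Delta_1 h=1-2n_{11}+2n_{31},\qquad \Delta_2 h=1-2n_{22}+2n_{32},\qquad \Delta_3 h=1+n_{13}+n_{23}-2n_{33},
\end{equation*}
and I would show that for any $\bm{\sigma}\neq\bm{0}$ in $\Gamma_N(\bm{\mu})$ at least one $\Delta_i h$ is strictly negative. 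Granted such a descent, the inductive hypothesis gives $\mathfrak{R}_i\bm{\sigma}\in\Gamma(\bm{\mu})$, and applying $\mathfrak{R}_i$ once more yields $\bm{\sigma}\in\Gamma(\bm{\mu})$.

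The main obstacle lies in the induction step: one must rule out the simultaneous obstruction $n_{11}\leq n_{31}$, $n_{22}\leq n_{32}$, $2n_{33}\leq 1+n_{13}+n_{23}$ together with $\bm{\sigma}\neq\bm{0}$. My plan is to combine these three inequalities with the six scalar identities obtained from equating the $\mu_j^2$ and $\mu_i\mu_j$ coefficients in the Pohozaev identity, and to carry out a finite case analysis on the small integer parameters $a=n_{31}-n_{11}\geq 0$, $b=n_{32}-n_{22}\geq 0$, $p=n_{13}-n_{33}$, $q=n_{23}-n_{33}$ (the last two bounded below by $-1$ from the third obstruction). Each configuration will either be forced to $\bm{\sigma}=\bm{0}$ directly through the sum-of-squares identities \eqref{CYL-Ch-3-Eq-7}--\eqref{CYL-Ch-3-Eq-9}, or be excluded by a parity/positivity consideration on the cross-term $\mu_i\mu_j$ coefficients, analogous to the parity argument for $n_{23}$ used at the end of Step~1 in the proof of Proposition~\ref{CYL-Ch-3-Proposition-3}.
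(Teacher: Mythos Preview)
Your overall architecture---first inclusion via Propositions~\ref{CYL-Ch-3-Proposition-1} and~\ref{CYL-Ch-3-Proposition-3}, then closure of $\Gamma_N(\bm{\mu})$ under $G$, then descent to $\bm{0}$---matches the paper exactly. The difference lies entirely in how the descent is executed, and here the paper's route is both complete and substantially simpler than what you sketch.

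Rather than a scalar height $h=n_{11}+n_{22}+n_{33}$, the paper uses the componentwise partial order $\bm{\sigma}_1\preceq\bm{\sigma}_2\Leftrightarrow(\bm{\sigma}_1)_i\le(\bm{\sigma}_2)_i$ for all $i$. Because $\mathfrak{R}_i$ alters only the $i$-th component, every application of a generator is automatically comparable: either $\mathfrak{R}_i\bm{\sigma}\preceq\bm{\sigma}$ or $\bm{\sigma}\preceq\mathfrak{R}_i\bm{\sigma}$. One then picks a local minimum $\bm{\sigma}_0$ of the orbit, so that $4\mu_i-2\sum_j a_{ij}\sigma_{j,0}\ge 0$ for every $i$. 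The key observation is that the Pohozaev identity \eqref{CYL-Ch-2-Eq-Pohozaev-identity} can be rewritten as
\[
\sigma_{1,0}\Bigl(2\sum_j a_{1j}\sigma_{j,0}-4\mu_1\Bigr)+\sigma_{2,0}\Bigl(2\sum_j a_{2j}\sigma_{j,0}-4\mu_2\Bigr)+2\sigma_{3,0}\Bigl(2\sum_j a_{3j}\sigma_{j,0}-4\mu_3\Bigr)\ \ge\ 0,
\]
a sum of nonnegative $\sigma_{i,0}$ times nonpositive factors. This forces either $\bm{\sigma}_0=\bm{0}$, or equality in all three bracketed terms---but the latter sums to $\mu_1+\mu_2+2\mu_3=0$, contradicting $\mu_i>0$. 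No case analysis is needed.

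Your proposal, by contrast, leaves the heart of the argument open: you explicitly flag the simultaneous obstruction $n_{11}\le n_{31}$, $n_{22}\le n_{32}$, $2n_{33}\le 1+n_{13}+n_{23}$ as the ``main obstacle'' and offer only a plan (introducing auxiliary parameters $a,b,p,q$ and appealing to unspecified parity and cross-term constraints). As written this is not a proof but a program, and it is not clear that the promised finite case analysis closes cleanly---the $\mu_i\mu_j$ cross-term identities are more intricate than the diagonal ones \eqref{CYL-Ch-3-Eq-7}--\eqref{CYL-Ch-3-Eq-9}. Even if it can be pushed through, it would be considerably longer than the paper's two-line contradiction. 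I recommend replacing your height-based induction with the partial-order argument above.
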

\begin{proof}
It is easy to see that $\Gamma(\bm{\mu})\subseteq\Gamma_{N}(\bm{\mu})$ by Proposition \ref{CYL-Ch-3-Proposition-1} and Proposition \ref{CYL-Ch-3-Proposition-3}. Thus it remains to show $\Gamma_{N}(\bm{\mu})\subseteq\Gamma(\bm{\mu})$. For any $\bm{\sigma}=(\sigma_1,\sigma_2,\sigma_3)\in\Gamma_{N}(\bm{\mu})$, by Proposition \ref{CYL-Ch-3-Proposition-1}, $\mathfrak{R}_i\bm{\sigma}$ satisfies the Pohozaev identity \eqref{CYL-Ch-2-Eq-Pohozaev-identity}. So we shall prove
\begin{equation}\label{CYL-Ch-3-Eq-25}
\text{if}~\bm{\sigma}\in\Gamma_{N}(\bm{\mu}),~\text{then}~\mathfrak{R}_i\bm{\sigma}\in\Gamma_{N}(\bm{\mu}),\quad i=1,2,3.
\end{equation}
Let $\bm{\sigma}\in\Gamma_{N}(\bm{\mu})$, ${\sigma}_i=4\sum_{j=1}^{3}n_{ij}\mu_{j},~i=1,2,3$, where $n_{ij}\in\mathbb{N}\cup\{0\}$. Then by the same arguments  of Proposition \ref{CYL-Ch-3-Proposition-3}, we obtain
\begin{equation*}
\left\{
\begin{aligned}
4-4n_{11}+8n_{31}\geq 0,\\
-4n_{12}+8n_{32}\geq 0,\\
-4n_{13}+8n_{33}\geq 0,
\end{aligned}
\right.
\quad
\left\{
\begin{aligned}
-4n_{21}+8n_{31}\geq 0,\\
4-4n_{22}+8n_{32}\geq 0,\\
-4n_{23}+8n_{33}\geq 0,
\end{aligned}
\right.
~\text{and}~
\left\{
\begin{aligned}
4n_{11}+4n_{21}-4n_{31}\geq 0,\\
4n_{12}+4n_{22}-4n_{32}\geq 0,\\
4+4n_{13}+4n_{23}-4n_{33}\geq 0.
\end{aligned}
\right.
\end{equation*}
This completes the proof of \eqref{CYL-Ch-3-Eq-25}.

Next we define a partial order $\preceq$ in $\Gamma_{N}(\bm{\mu})$, we say
\begin{equation*}
\bm{\sigma}_{1}\preceq\bm{\sigma}_{2}~\text{provided}~(\bm{\sigma}_{1})_{i}\leq(\bm{\sigma}_{2})_{i},\quad i=1,2,3.
\end{equation*}
Then there must hold
\begin{equation*}
\text{either}~~~\bm{\sigma}\preceq\mathfrak{R}_{i}\bm{\sigma}~~~\text{or}~
~~\mathfrak{R}_{i}\bm{\sigma}\preceq\bm{\sigma},\quad i=1,2,3.
\end{equation*}
For any $\bm{\sigma}\in\Gamma_{N}(\bm{\mu})$, let
\begin{equation*}
\Gamma_{\bm{\sigma}}:=\left\{\mathfrak{R}_{i_{1}}\mathfrak{R}_{i_{2}}\cdots\mathfrak{R}_{i_{n}}\bm{\sigma}\mid n\in\mathbb{N}\cup\{0\},~i_{k}\in\{1,2,3\},~1\leq k\leq n\right\}.
\end{equation*}
Thus for any $\bm{\sigma}_{1},\bm{\sigma}_{2}\in\Gamma_{N}(\bm{\mu})$, we have either
\begin{equation}\label{CYL-Ch-3-Eq-14}
\Gamma_{\bm{\sigma}_{1}}=\Gamma_{\bm{\sigma}_{2}}~~~\text{or}~
~~\Gamma_{\bm{\sigma}_{1}}\cap\Gamma_{\bm{\sigma}_{2}}=\emptyset.
\end{equation}

Now we can prove that $\bm{0}=(0,0,0)\in \Gamma_{\bm{\sigma}}$ for any $\bm{\sigma}\in\Gamma_{N}(\bm{\mu})$. An element $\hat{\bm{\sigma}}\in\Gamma_{\bm{\sigma}}$ is minimal if $\widetilde{\bm{\sigma}}\preceq\hat{\bm{\sigma}}$ for some $\widetilde{\bm{\sigma}}\in\Gamma_{\bm{\sigma}}$, then $\widetilde{\bm{\sigma}}=\hat{\bm{\sigma}}$. By well-ordering Principle, we conclude that $\Gamma_{\bm{\sigma}}$ has a local minimal element $\bm{\sigma}_{0}=(\sigma_{1,0},\sigma_{2,0},\sigma_{3,0})$, i.e.,
\begin{equation*}
\bm{\sigma}_{0}\preceq\mathfrak{R}_{i}\bm{\sigma}_{0},\quad i=1,2,3.
\end{equation*}
Therefore, we have
\begin{equation*}
4\mu_{i}-2\sum\limits_{j=1}^{3}a_{ij}\sigma_{j,0}\geq 0,\quad i=1,2,3.
\end{equation*}
On the other hand, since $\bm{\sigma}_{0}$ satisfies the Pohozaev identity \eqref{CYL-Ch-2-Eq-Pohozaev-identity}, we get
\begin{equation*}
0\leq\sigma_{1,0}\left(2\sum\limits_{j=1}^{3}a_{1j}\sigma_{j,0}-4\mu_{1}\right)
+\sigma_{2,0}\left(2\sum\limits_{j=1}^{3}a_{2j}\sigma_{j,0}-4\mu_{2}\right)
+2\sigma_{3,0}\left(2\sum\limits_{j=1}^{3}a_{3j}\sigma_{j,0}-4\mu_{3}\right)\leq 0,
\end{equation*}
which implies that $\sigma_{i,0}=0,~i=1,2,3$. Otherwise, we must have
\begin{align*}
2\left(\sigma_{1,0}-\sigma_{3,0}\right)=4\mu_1,~2\left(\sigma_{2,0}-\sigma_{3,0}\right)=4\mu_2,
~2\left(-\frac{1}{2}\sigma_{1,0}-\frac{1}{2}\sigma_{2,0}+\sigma_{3,0}\right)=4\mu_3,
\end{align*}
then $2\mu_1+2\mu_2+4\mu_3=0$, a contradiction. Hence $\bm{0}\in\Gamma_{\bm{\sigma}}$. By \eqref{CYL-Ch-3-Eq-14}, we obtain $\Gamma_{\bm{\sigma}}=\Gamma_{\bm{0}}$ for any $\bm{\sigma}\in\Gamma_{N}(\bm{\mu})$. This amounts to say that
\begin{equation*}
\Gamma_{N}(\bm{\mu})=\Gamma(\bm{\mu}).
\end{equation*}
The proof of Proposition \ref{CYL-Ch-3-Proposition-5} is complete.
\end{proof}

In the following of this section we are focused on giving a  \textbf{precise expression} for any element in $\Gamma(\bm{\mu})$.

\begin{lemma}\label{CYL-Ch-3-Lemma-8}
Let $\gamma_i=0$, $i=1,2,3$. For any $\bm{\sigma}=(\sigma_1,\sigma_2,\sigma_3)\in\Gamma(1,1,1)$ with $\sigma_i=4n_i$ and $n_i\in\mathbb{N}\cup\{0\}$, $i=1,2,3$, we set
\begin{equation*}
n_1-n_3\equiv m_1^{\prime}~(\mathrm{mod}~4),\quad n_2-n_3\equiv m_2^{\prime}~(\mathrm{mod}~4)\quad
\mbox{for some}~m_1',m_2'\in\{0,1,2,3\}.
\end{equation*}
Then $(m_1^{\prime},m_2^{\prime})$ admits all the following $\mathbf{8}$ types:
\begin{equation*}
\Big\{(0,0),~(0,1),~(1,0),~(1,1),~(2,2),~(2,3),~(3,2),~(3,3)\Big\}.
\end{equation*}
In this way, we call that $\bm{\sigma}$ is of type $(m_1^{\prime},m_2^{\prime})$.
\end{lemma}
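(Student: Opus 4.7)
The plan is to combine the Pohozaev identity from Proposition \ref{CYL-Ch-3-Proposition-1} with an elementary modulo-$4$ computation to identify the admissible pairs $(m_1',m_2')$, and then to exhibit explicit short words in $\mathfrak{R}_1,\mathfrak{R}_2,\mathfrak{R}_3$ realizing each of the eight types.

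First, since $\bm{\sigma}\in\Gamma(1,1,1)$, substituting $\mu_i=1$ and $\sigma_i=4n_i$ into the Pohozaev identity \eqref{CYL-Ch-2-Eq-Pohozaev-identity} and dividing by $16$ gives
\begin{equation*}
(n_1-n_3)^2+(n_2-n_3)^2 = n_1+n_2+2n_3.
\end{equation*}
Setting $a=n_1-n_3$ and $b=n_2-n_3$, the right-hand side equals $a+b+4n_3$, so reducing modulo $4$ produces the constraint $a(a-1)+b(b-1)\equiv 0\pmod 4$. A direct check of the four residue classes shows $a(a-1)\equiv 0\pmod 4$ when $a\equiv 0,1\pmod 4$ and $a(a-1)\equiv 2\pmod 4$ when $a\equiv 2,3\pmod 4$, with the same dichotomy for $b$. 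Hence the congruence forces $m_1'$ and $m_2'$ to lie in $\{0,1\}$ simultaneously, or in $\{2,3\}$ simultaneously, which is precisely the list of eight pairs in the statement.

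To complete the proof I would produce one explicit element of each type by applying short words of generators to $\bm 0$, using the recursive formula for $\mathfrak{R}_i\bm{\sigma}$. The element $\bm 0=(0,0,0)$ realizes $(0,0)$; $\mathfrak{R}_1\bm 0=(4,0,0)$ realizes $(1,0)$; $\mathfrak{R}_2\bm 0=(0,4,0)$ realizes $(0,1)$; $\mathfrak{R}_3\bm 0=(0,0,4)$ realizes $(3,3)$; $\mathfrak{R}_2\mathfrak{R}_1\bm 0=(4,4,0)$ realizes $(1,1)$; $\mathfrak{R}_3\mathfrak{R}_1\bm 0=(4,0,8)$ realizes $(3,2)$; $\mathfrak{R}_3\mathfrak{R}_2\bm 0=(0,4,8)$ realizes $(2,3)$; and $\mathfrak{R}_2\mathfrak{R}_1\mathfrak{R}_3\bm 0=(12,12,4)$ realizes $(2,2)$. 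Each of these is an immediate computation from the definition of $\mathfrak{R}_i$ in Section \ref{CYL-Section-1}.

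The only mildly delicate point is pinning down the type $(2,2)$: the first two levels of the tree displayed in the proof of Proposition \ref{CYL-Ch-3-Proposition-3} do not contain it, so one is forced to descend to a length-three word. Once that element is exhibited, the lemma is complete. Conceptually, the content of the argument is that the Pohozaev constraint modulo $4$ is exactly the obstruction, and that the affine Weyl action saturates this obstruction already at level three; this is the mechanism that will later allow all local-mass possibilities to be packaged into the eight explicit families of Theorem \ref{CYL-Ch-3-Theorem-6}.
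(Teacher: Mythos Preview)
Your proof is correct and follows essentially the same approach as the paper's: reduce the Pohozaev identity modulo $4$ to obtain the necessary condition on $(m_1',m_2')$, then exhibit explicit elements of $\Gamma(1,1,1)$ realizing each of the eight types. The only cosmetic differences are that you phrase the congruence as $a(a-1)+b(b-1)\equiv 0\pmod 4$ rather than ${m_1'}^2+{m_2'}^2\equiv m_1'+m_2'\pmod 4$, and that you supply explicit group words for each representative (in particular using $(12,12,4)=\mathfrak{R}_2\mathfrak{R}_1\mathfrak{R}_3\bm 0$ for type $(2,2)$), whereas the paper simply lists elements such as $(4,4,12)$ and checks the Pohozaev identity directly, relying implicitly on Proposition~\ref{CYL-Ch-3-Proposition-5} to place them in $\Gamma(1,1,1)$.
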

\begin{proof}
Applying the Pohozaev identity \eqref{CYL-Ch-2-Eq-Pohozaev-identity} with $\mu_i=1$, $i=1,2,3$, we obtain
\begin{equation}\label{CYL-Ch-3-Eq-20}
(n_1-n_3)^2+(n_2-n_3)^2=n_1+n_2+2n_3.
\end{equation}
Notice that
\begin{equation}\label{CYL-Ch-3-Eq-21}
n_1-n_3=4k_1+m_1^{\prime},~~n_2-n_3=4k_2+m_2^{\prime}
\end{equation}
for some $k_1, k_2\in\mathbb{Z}$ and $m_1^{\prime},m_2^{\prime}=0,1,2,3$. Substituting \eqref{CYL-Ch-3-Eq-21} into \eqref{CYL-Ch-3-Eq-20}, we conclude that
\begin{equation*}
{m_1^{\prime}}^2+{m_2^{\prime}}^2\equiv m_1^{\prime}+m_2^{\prime}~~(\mathrm{mod}~4),
\end{equation*}
which is solvable if and only if
\begin{equation*}
(m_1^{\prime},m_2^{\prime})\in \Big\{(0,0),~(0,1),~(1,0),~(1,1),~(2,2),~(2,3),~(3,2),~(3,3)\Big\}.
\end{equation*}
We notice that the following $8$ elements satisfy the Pohozaev identity \eqref{CYL-Ch-2-Eq-Pohozaev-identity} with $\mu_i=1$, $i=1,2,3$, and
\begin{equation*}
\begin{aligned}
(0,0,0)~\text{is of type}~(0,0),\quad (4,0,0)~\text{is of type}~(1,0),\\
(0,4,0)~\text{is of type}~(0,1),\quad (4,4,0)~\text{is of type}~(1,1),\\
(4,4,12)~\text{is of type}~(2,2),\quad (4,0,8)~\text{is of type}~(3,2),\\
(0,4,8)~\text{is of type}~(2,3),\quad (0,0,4)~\text{is of type}~(3,3).
\end{aligned}
\end{equation*}
Therefore, all the $8$ types could happen.
\end{proof}

\begin{rmk}\label{CYL-Ch-3-Remark-9}
(1) For any $\bm{\sigma}=(\sigma_1,\sigma_2,\sigma_3)\in\Gamma(1,1,1)$ which is of type $(m_1^{\prime},m_2^{\prime})$, one can easily check that $\mathfrak{R}_i\bm{\sigma}$ is of type $(n_1^{\prime},n_2^{\prime})$, where
\begin{equation*}
\begin{aligned}
\begin{cases}
n_1'\equiv-m_1'+1~(\mathrm{mod}~4),\quad n_2'=m_2',	
\qquad &\mathrm{if}~i=1,\\	
n_1'=m_1',\quad n_2'\equiv-m_2'+1~(\mathrm{mod}~4),\qquad &\mathrm{if}~i=2,\\
n_1'\equiv-m_2'-1~(\mathrm{mod}~4),\quad n_2'\equiv-m_1'-1~(\mathrm{mod}~4),\qquad &\mathrm{if}~i=3.
\end{cases}	
\end{aligned}	
\end{equation*}
Precisely, we conclude that
\begin{equation*}
\text{if}~\bm{\sigma}~\text{is of type}~(0,0),~\mathfrak{R}_1\bm{\sigma}~\text{is of type}~(1,0),~\mathfrak{R}_2\bm{\sigma}~\text{is of type}~(0,1),~\mathfrak{R}_3\bm{\sigma}~\text{is of type}~(3,3);
\end{equation*}
\begin{equation*}
\text{if}~\bm{\sigma}~\text{is of type}~(0,1),~\mathfrak{R}_1\bm{\sigma}~\text{is of type}~(1,1),~\mathfrak{R}_2\bm{\sigma}~\text{is of type}~(0,0),~\mathfrak{R}_3\bm{\sigma}~\text{is of type}~(2,3);
\end{equation*}
\begin{equation*}
\text{if}~\bm{\sigma}~\text{is of type}~(1,0),~\mathfrak{R}_1\bm{\sigma}~\text{is of type}~(0,0),~\mathfrak{R}_2\bm{\sigma}~\text{is of type}~(1,1),~\mathfrak{R}_3\bm{\sigma}~\text{is of type}~(3,2);
\end{equation*}
\begin{equation*}
\text{if}~\bm{\sigma}~\text{is of type}~(1,1),~\mathfrak{R}_1\bm{\sigma}~\text{is of type}~(0,1),~\mathfrak{R}_2\bm{\sigma}~\text{is of type}~(1,0),~\mathfrak{R}_3\bm{\sigma}~\text{is of type}~(2,2);
\end{equation*}
\begin{equation*}
\text{if}~\bm{\sigma}~\text{is of type}~(2,2),~\mathfrak{R}_1\bm{\sigma}~\text{is of type}~(3,2),~\mathfrak{R}_2\bm{\sigma}~\text{is of type}~(2,3),~\mathfrak{R}_3\bm{\sigma}~\text{is of type}~(1,1);
\end{equation*}
\begin{equation*}
\text{if}~\bm{\sigma}~\text{is of type}~(2,3),~\mathfrak{R}_1\bm{\sigma}~\text{is of type}~(3,3),~\mathfrak{R}_2\bm{\sigma}~\text{is of type}~(2,2),~\mathfrak{R}_3\bm{\sigma}~\text{is of type}~(0,1);
\end{equation*}
\begin{equation*}
\text{if}~\bm{\sigma}~\text{is of type}~(3,2),~\mathfrak{R}_1\bm{\sigma}~\text{is of type}~(2,2),~\mathfrak{R}_2\bm{\sigma}~\text{is of type}~(3,3),~\mathfrak{R}_3\bm{\sigma}~\text{is of type}~(1,0);
\end{equation*}
\begin{equation*}
\text{if}~\bm{\sigma}~\text{is of type}~(3,3),~\mathfrak{R}_1\bm{\sigma}~\text{is of type}~(2,3),~\mathfrak{R}_2\bm{\sigma}~\text{is of type}~(3,2),~\mathfrak{R}_3\bm{\sigma}~\text{is of type}~(0,0).
\end{equation*}

\noindent (2) For any general $\bm{\sigma}\in\Gamma(\bm{\mu})$, $\sigma_{i}=4\sum_{j=1}^{3}n_{ij}\mu_{j},~n_{ij}\in\mathbb{N}\cup\{0\}$. Let $N({\sigma}_i)=4\sum_{j=1}^{3}n_{ij}$ and denote
\begin{equation*}
\frac{1}{4}\left(N({\sigma}_1)-N({\sigma}_3)\right)\equiv m_1^{\prime}~(\mathrm{mod}~4),\quad \frac{1}{4}\left(N({\sigma}_2)-N({\sigma}_3)\right)\equiv m_2^{\prime}~(\mathrm{mod}~4)\quad \mbox{for some}~m_1',m_2'\in\{0,1,2,3\}.
\end{equation*}
Then $(m_1^{\prime},m_2^{\prime})$ also admits all the following $\mathbf{8}$ types:
\begin{equation*}
\Big\{(0,0),~(0,1),~(1,0),~(1,1),~(2,2),~(2,3),~(3,2),~(3,3)\Big\}.
\end{equation*}
We will prove this result in Lemma \ref{CYL-Ch-3-Lemma-10}. In this way, we say that $\bm{\sigma}$ is of type $(m_1^{\prime},m_2^{\prime})$ as well.
\end{rmk}

\begin{lemma}\label{CYL-Ch-3-Lemma-10}
For any $\bm{\sigma}\in\Gamma(\bm{\mu})$, suppose that $\bm{\sigma}$ is of type $(m_1^{\prime},m_2^{\prime})$ for some $m_1^{\prime},m_2^{\prime}=0,1,2,3$. Then $(m_1^{\prime},m_2^{\prime})$ admits all the following $\mathbf{8}$ types:
\begin{equation}\label{CYL-Ch-3-Eq-28}
\Big\{(0,0),~(0,1),~(1,0),~(1,1),~(2,2),~(2,3),~(3,2),~(3,3)\Big\}.
\end{equation}
\end{lemma}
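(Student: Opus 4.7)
The plan is to deduce Lemma \ref{CYL-Ch-3-Lemma-10} from Lemma \ref{CYL-Ch-3-Lemma-8} through a specialization argument. Given $\bm{\sigma} \in \Gamma(\bm{\mu})$ with canonical representation $\sigma_i = 4\sum_{j=1}^{3} n_{ij}\mu_j$ (with $n_{ij}\in\mathbb{N}\cup\{0\}$ coming from Proposition \ref{CYL-Ch-3-Proposition-3}), I would introduce the \emph{diagonal specialization} $\hat{\bm{\sigma}} = (N(\sigma_1), N(\sigma_2), N(\sigma_3))$ where $N(\sigma_i) = 4\sum_j n_{ij}$; this corresponds to evaluating the polynomial representation at $\mu_1 = \mu_2 = \mu_3 = 1$. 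The key point is that by Proposition \ref{CYL-Ch-3-Proposition-1}, $\bm{\sigma}$ satisfies the Pohozaev identity \eqref{CYL-Ch-2-Eq-Pohozaev-identity} identically as a polynomial in $\mu_1, \mu_2, \mu_3$, with both sides homogeneous of degree two. Evaluating at $\mu_j = 1$ therefore yields the corresponding Pohozaev identity for $\hat{\bm{\sigma}}$ with parameters $\mu_i = 1$, and combined with the nonnegative integrality of the coefficients, Proposition \ref{CYL-Ch-3-Proposition-5} then gives $\hat{\bm{\sigma}} \in \Gamma(1,1,1)$.

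By construction, the type $(m_1', m_2')$ of $\bm{\sigma}$ in the sense of Remark \ref{CYL-Ch-3-Remark-9}(2) coincides with the type of $\hat{\bm{\sigma}}$ in the sense of Lemma \ref{CYL-Ch-3-Lemma-8}, since both are obtained by reducing $\bigl(N(\sigma_1)-N(\sigma_3)\bigr)/4$ and $\bigl(N(\sigma_2)-N(\sigma_3)\bigr)/4$ modulo $4$. Hence the one direction ``$(m_1',m_2')$ belongs to the listed $\mathbf{8}$-element set'' is an immediate consequence of Lemma \ref{CYL-Ch-3-Lemma-8}.

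For the converse direction (every one of the $8$ types is realized), I would exhibit explicit elements of $\Gamma(\bm{\mu})$ obtained from the tree of group actions displayed in the proof of Proposition \ref{CYL-Ch-3-Proposition-3}. The first two levels already produce $\bm{0}$, $(4\mu_1, 0, 0)$, $(0, 4\mu_2, 0)$, $(0, 0, 4\mu_3)$, $(4\mu_1, 4\mu_2, 0)$, which by direct inspection have respective types $(0,0)$, $(1,0)$, $(0,1)$, $(3,3)$, $(1,1)$. The remaining three types $(2,2)$, $(3,2)$, $(2,3)$ are produced by applying $\mathfrak{R}_3$ to the elements of types $(1,1)$, $(1,0)$, $(0,1)$ respectively; this uses the transformation rules of Remark \ref{CYL-Ch-3-Remark-9}(1), which transfer verbatim from the $\Gamma(1,1,1)$ setting to the general $\bm{\mu}$ setting because the specialization map intertwines the linear action of $G$ on $\Gamma(\bm{\mu})$ with that on $\Gamma(1,1,1)$. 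For instance, $\mathfrak{R}_3(4\mu_1, 4\mu_2, 0) = (4\mu_1, 4\mu_2, 4\mu_1+4\mu_2+4\mu_3)$ has type $(2,2)$.

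The main obstacle has essentially been absorbed into Lemma \ref{CYL-Ch-3-Lemma-8}, which handled the modular arithmetic imposed by the Pohozaev identity mod $4$. The only new ingredient needed here is the naturality of specialization: the polynomial homogeneity of the Pohozaev identity guarantees it survives the evaluation $\mu_j = 1$, and the nonnegative-integer structure of the coefficients $n_{ij}$ is trivially preserved. Once these observations are in place, the proof is essentially bookkeeping.
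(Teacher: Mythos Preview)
Your proof is correct and takes a genuinely different route from the paper's. The paper proceeds by direct induction on the tree level: it verifies by hand that the first few levels yield only the eight admissible types, and then uses the transformation rules of Remark~\ref{CYL-Ch-3-Remark-9}(1) (computed anew for general $\bm{\mu}$) to show that each $\mathfrak{R}_i$ sends an admissible type to an admissible type. Your approach instead reduces the question to Lemma~\ref{CYL-Ch-3-Lemma-8} by the specialization $\mu_1=\mu_2=\mu_3=1$, observing that this evaluation sends $\Gamma(\bm{\mu})$ into $\Gamma(1,1,1)$ and preserves the type by construction.

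Your argument is cleaner because it avoids redoing the mod~$4$ case analysis already carried out in Lemma~\ref{CYL-Ch-3-Lemma-8}; the paper essentially repeats that analysis in a thinly disguised form. One small point: when you say that ``by Proposition~\ref{CYL-Ch-3-Proposition-1}, $\bm{\sigma}$ satisfies the Pohozaev identity identically as a polynomial,'' this is not quite what Proposition~\ref{CYL-Ch-3-Proposition-1} \emph{states} (it is phrased for a fixed $\bm{\mu}$), though it does follow from its proof, which is purely algebraic via Vieta's theorem. A slicker justification, which you in fact allude to later, is simply that the chain $\mathfrak{R}_{i_n}\cdots\mathfrak{R}_{i_1}(\bm{0})$ produces the same word in $\Gamma(\bm{\mu})$ and in $\Gamma(1,1,1)$, so the specialization map intertwines the two $G$-actions and sends tree elements to tree elements directly---no detour through $\Gamma_N$ and Proposition~\ref{CYL-Ch-3-Proposition-5} is needed. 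Either way, the argument stands.
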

\begin{proof}
We prove this result by induction. Recall the binary tree diagram given in the proof of Proposition \ref{CYL-Ch-3-Proposition-3} and we can directly check that
\begin{equation*}
\begin{aligned}
(0,0,0)~\text{is of type}~(0,0),\quad &(4\mu_1,0,0)~\text{is of type}~(1,0),\\
(0,4\mu_2,0)~\text{is of type}~(0,1),\quad &(0,0,4\mu_3)~\text{is of type}~(3,3),\\
(4\mu_1,4\mu_2,0)~\text{is of type}~(1,1),\quad &(4\mu_1,0,4\mu_1+4\mu_3)~\text{is of type}~(3,2),\\
(0,4\mu_2,4\mu_2+4\mu_3)~\text{is of type}~(2,3),\quad &(4\mu_1+8\mu_3,0,4\mu_3)~\text{is of type}~(2,3),\\
(0,4\mu_2+8\mu_3,4\mu_3)~\text{is of type}~(3,2),\quad &(4\mu_1,4\mu_2,4\mu_1+4\mu_2+4\mu_3)~\text{is of type}~(2,2).\\
\end{aligned}
\end{equation*}
The first $9$ elements are in the $1$-st, $2$-nd and $3$-rd levels, the last one is in the $4$-th level. Obviously, these elements are of type $(i,j)$ described as in \eqref{CYL-Ch-3-Eq-28} and all $8$ types could happen. We suppose that each element in the $N$-th level is of type $(i,j)$, which is one of \eqref{CYL-Ch-3-Eq-28}. Then we shall prove each element in the $(N+1)$-th level is of type $(i^{\prime},j^{\prime})$, where $(i^{\prime},j^{\prime})$ admits a form belonging to \eqref{CYL-Ch-3-Eq-28}.

Suppose $\bm{\sigma}=(\sigma_1,\sigma_2,\sigma_3)\in\Gamma(\bm{\mu})$ is an element in the $(N+1)$-level, where $\sigma_{i}=4\sum_{j=1}^{3}n_{ij}\mu_{j},~n_{ij}\in\mathbb{N}\cup\{0\}$. Without loss of generality, we assume that $\bm{\sigma}$ is of type $(0,0)$. Then one can easily check that
$\mathfrak{R}_i\bm{\sigma}$ is of type $(n_1^{\prime},n_2^{\prime})$, where for $i=1,2,3$, respectively,
\begin{equation*}
\begin{cases}
n_1^{\prime}\equiv-m_1^{\prime}+1~(\mathrm{mod}~4),\\
n_2^{\prime}=m_2^{\prime},
\end{cases}\qquad
\begin{cases}
n_1^{\prime}=m_1^{\prime},\\
n_2^{\prime}\equiv-m_2^{\prime}+1~(\mathrm{mod}~4),\\
\end{cases}\qquad
\begin{cases}
n_1^{\prime}\equiv-m_2^{\prime}-1~(\mathrm{mod}~4),\\
n_2^{\prime}\equiv-m_1^{\prime}-1~(\mathrm{mod}~4).\\
\end{cases}
\end{equation*}
Then we get the same conclusion holds by Remark \ref{CYL-Ch-3-Remark-9}-(1). Thus, we finish the proof.
\end{proof}

\begin{theorem}\label{CYL-Ch-3-Theorem-6}
For each element $\bm{\sigma}\in\Gamma(\bm{\mu})$, we can express $\bm{\sigma}$ as $\bm{\sigma}^{t}=\mathbb{F}\bm{\mu}^{t}$, where $\mathbb{F}=(f_{ij})_{3\times 3}$ is a matrix, $f_{ij}$ are multiples of $4$ by Proposition \ref{CYL-Ch-3-Proposition-5}, and $t$ denotes the transpose operator, i.e.,
\begin{equation*}
\begin{aligned}
\left(
\begin{array}{ccc}
\sigma_1\\
\sigma_2\\
\sigma_3
\end{array}
\right)
\end{aligned}
=\mathbb{F}\bm{\mu}^{t}=
\begin{aligned}
\left(
\begin{array}{ccc}
f_{11} & f_{12} & f_{13}\\
f_{21} & f_{22} & f_{23}\\
f_{31} & f_{32} & f_{33}
\end{array}
\right)
\end{aligned}
\begin{aligned}
\left(
\begin{array}{ccc}
\mu_1\\
\mu_2\\
\mu_3
\end{array}
\right).
\end{aligned}
\end{equation*}
Let
\begin{equation*}
\Gamma_{\mathbb{F}}(\bm{\mu})=\left\{\bm{\sigma} ~|~ \bm{\sigma}^{t}=\mathbb{F}\bm{\mu}^{t},~\text{where}~\mathbb{F}~\text{admits}~\mathbb{F}^{(\ell)},~\ell=1,\cdots,8\right\}.
\end{equation*}
Then $\Gamma_{\mathbb{F}}(\bm{\mu})=\Gamma_{N}(\bm{\mu})$. Here $\mathbb{F}^{(\ell)}$ are defined as the following:
\\

\noindent \textbf{Type~$(0,0)$.} For any $m_1,m_2\in \mathbb{Z}$ with $(m_1,m_2)\equiv(0,0)~(\mathrm{mod}~4)$, we have
\begin{equation*}
\mathbb{F}^{(1)}=\begin{aligned}
\left(
\begin{array}{lll}
\frac{1}{4}m_1^2+\frac{1}{4}m_2^2 &
\frac{1}{4}m_1^2+m_1+\frac{1}{4}m_2^2-m_2 &
\frac{1}{2}m_1^2+2m_1+\frac{1}{2}m_2^2\\
\frac{1}{4}m_1^2-m_1+\frac{1}{4}m_2^2+m_2 &
\frac{1}{4}m_1^2+\frac{1}{4}m_2^2 &
\frac{1}{2}m_1^2+\frac{1}{2}m_2^2+2m_2\\
\frac{1}{4}m_1^2-m_1+\frac{1}{4}m_2^2 &
\frac{1}{4}m_1^2+\frac{1}{4}m_2^2-m_2 &
\frac{1}{2}m_1^2+\frac{1}{2}m_2^2
\end{array}
\right).
\end{aligned}
\end{equation*}

\noindent \textbf{Type~$(0,1)$.} For any $m_1,m_2\in \mathbb{Z}$ with $(m_1,m_2)\equiv(0,1)~(\mathrm{mod}~4)$, we have
\begin{equation*}
\mathbb{F}^{(2)}=\begin{aligned}
\left(
\begin{array}{lll}
\frac{1}{4}m_1^2+\frac{1}{4}m_2^2-\frac{1}{2}m_2+\frac{1}{4} &
\frac{1}{4}m_1^2+m_1+\frac{1}{4}m_2^2+\frac{1}{2}m_2-\frac{3}{4} &
\frac{1}{2}m_1^2+2m_1+\frac{1}{2}m_2^2-m_2+\frac{1}{2} \\
\frac{1}{4}m_1^2-m_1+\frac{1}{4}m_2^2+\frac{1}{2}m_2-\frac{3}{4} &
\frac{1}{4}m_1^2+\frac{1}{4}m_2^2+\frac{3}{2}m_2+\frac{9}{4} &
\frac{1}{2}m_1^2+\frac{1}{2}m_2^2+m_2-\frac{3}{2}\\
\frac{1}{4}m_1^2-m_1+\frac{1}{4}m_2^2-\frac{1}{2}m_2+\frac{1}{4} &
\frac{1}{4}m_1^2+\frac{1}{4}m_2^2+\frac{1}{2}m_2-\frac{3}{4} &
\frac{1}{2}m_1^2+\frac{1}{2}m_2^2-m_2+\frac{1}{2}
  \end{array}
\right).
\end{aligned}
\end{equation*}

\noindent \textbf{Type~$(1,0)$.} For any $m_1,m_2\in \mathbb{Z}$ with $(m_1,m_2)\equiv(1,0)~(\mathrm{mod}~4)$, we have
\begin{equation*}
\mathbb{F}^{(3)}=\begin{aligned}
\left(
\begin{array}{lll}
\frac{1}{4}m_1^2+\frac{3}{2}m_1+\frac{1}{4}m_2^2+\frac{9}{4} &
\frac{1}{4}m_1^2+\frac{1}{2}m_1+\frac{1}{4}m_2^2-m_2-\frac{3}{4} &
\frac{1}{2}m_1^2+m_1+\frac{1}{2}m_2^2-\frac{3}{2} \\
\frac{1}{4}m_1^2+\frac{1}{2}m_1+\frac{1}{4}m_2^2+m_2-\frac{3}{4} &
\frac{1}{4}m_1^2-\frac{1}{2}m_1+\frac{1}{4}m_2^2+\frac{1}{4} &
\frac{1}{2}m_1^2-m_1+\frac{1}{2}m_2^2+2m_2+\frac{1}{2}\\
\frac{1}{4}m_1^2+\frac{1}{2}m_1+\frac{1}{4}m_2^2-\frac{3}{4} &
\frac{1}{4}m_1^2-\frac{1}{2}m_1+\frac{1}{4}m_2^2-m_2+\frac{1}{4} &
\frac{1}{2}m_1^2-m_1+\frac{1}{2}m_2^2+\frac{1}{2}
  \end{array}
\right).
\end{aligned}
\end{equation*}

\noindent \textbf{Type~$(1,1)$.} For any $m_1,m_2\in \mathbb{Z}$ with $(m_1,m_2)\equiv(1,1)~(\mathrm{mod}~4)$, we have
\begin{equation*}
\mathbb{F}^{(4)}=\begin{aligned}
\left(
\begin{array}{lll}
\frac{1}{4}m_1^2+\frac{3}{2}m_1+\frac{1}{4}m_2^2-\frac{1}{2}m_2+\frac{5}{2} &
\frac{1}{4}m_1^2+\frac{1}{2}m_1+\frac{1}{4}m_2^2+\frac{1}{2}m_2-\frac{3}{2} &
\frac{1}{2}m_1^2+m_1+\frac{1}{2}m_2^2-m_2-1\\
\frac{1}{4}m_1^2+\frac{1}{2}m_1+\frac{1}{4}m_2^2+\frac{1}{2}m_2-\frac{3}{2} &
\frac{1}{4}m_1^2-\frac{1}{2}m_1+\frac{1}{4}m_2^2+\frac{3}{2}m_2+\frac{5}{2} &
\frac{1}{2}m_1^2-m_1+\frac{1}{2}m_2^2+m_2-1\\
\frac{1}{4}m_1^2+\frac{1}{2}m_1+\frac{1}{4}m_2^2-\frac{1}{2}m_2-\frac{1}{2} &
\frac{1}{4}m_1^2-\frac{1}{2}m_1+\frac{1}{4}m_2^2+\frac{1}{2}m_2-\frac{1}{2} &
\frac{1}{2}m_1^2-m_1+\frac{1}{2}m_2^2-m_2+1
  \end{array}
\right).
\end{aligned}
\end{equation*}

\noindent \textbf{Type~$(2,2)$.} For any $m_1,m_2\in \mathbb{Z}$ with $(m_1,m_2)\equiv(2,2)~(\mathrm{mod}~4)$, we have
\begin{equation*}
\mathbb{F}^{(5)}=\begin{aligned}
\left(
\begin{array}{lll}
\frac{1}{4}m_1^2+m_1+\frac{1}{4}m_2^2-m_2+2 &
\frac{1}{4}m_1^2+\frac{1}{4}m_2^2-2 &
\frac{1}{2}m_1^2+2m_1+\frac{1}{2}m_2^2\\
\frac{1}{4}m_1^2+\frac{1}{4}m_2^2-2 &
\frac{1}{4}m_1^2-m_1+\frac{1}{4}m_2^2+m_2+2 &
\frac{1}{2}m_1^2+\frac{1}{2}m_2^2+2m_2\\
\frac{1}{4}m_1^2+\frac{1}{4}m_2^2-m_2 &
\frac{1}{4}m_1^2-m_1+\frac{1}{4}m_2^2 &
\frac{1}{2}m_1^2+\frac{1}{2}m_2^2
\end{array}
\right).
\end{aligned}
\end{equation*}

\noindent \textbf{Type~$(2,3)$.} For any $m_1,m_2\in \mathbb{Z}$ with $(m_1,m_2)\equiv(2,3)~(\mathrm{mod}~4)$, we have
\begin{equation*}
\mathbb{F}^{(6)}=\begin{aligned}
\left(
\begin{array}{lll}
\frac{1}{4}m_1^2+m_1+\frac{1}{4}m_2^2+\frac{1}{2}m_2+\frac{5}{4} &
\frac{1}{4}m_1^2+\frac{1}{4}m_2^2-\frac{1}{2}m_2-\frac{7}{4} &
\frac{1}{2}m_1^2+2m_1+\frac{1}{2}m_2^2-m_2+\frac{1}{2} \\
\frac{1}{4}m_1^2+\frac{1}{4}m_2^2+\frac{3}{2}m_2+\frac{1}{4} &
\frac{1}{4}m_1^2-m_1+\frac{1}{4}m_2^2+\frac{1}{2}m_2+\frac{5}{4} &
\frac{1}{2}m_1^2+\frac{1}{2}m_2^2+m_2-\frac{3}{2}\\
\frac{1}{4}m_1^2+\frac{1}{4}m_2^2+\frac{1}{2}m_2-\frac{3}{4} &
\frac{1}{4}m_1^2-m_1+\frac{1}{4}m_2^2-\frac{1}{2}m_2+\frac{1}{4} &
\frac{1}{2}m_1^2+\frac{1}{2}m_2^2-m_2+\frac{1}{2}
\end{array}
\right).
\end{aligned}
\end{equation*}

\noindent \textbf{Type~$(3,2)$.} For any $m_1,m_2\in \mathbb{Z}$ with $(m_1,m_2)\equiv(3,2)~(\mathrm{mod}~4)$, we have
\begin{equation*}
\mathbb{F}^{(7)}=\begin{aligned}
\left(
\begin{array}{lll}
\frac{1}{4}m_1^2+\frac{1}{2}m_1+\frac{1}{4}m_2^2-m_2+\frac{5}{4} &
\frac{1}{4}m_1^2+\frac{3}{2}m_1+\frac{1}{4}m_2^2+\frac{1}{4} &
\frac{1}{2}m_1^2+m_1+\frac{1}{2}m_2^2-\frac{3}{2} \\
\frac{1}{4}m_1^2-\frac{1}{2}m_1+\frac{1}{4}m_2^2-\frac{7}{4} &
\frac{1}{4}m_1^2+\frac{1}{2}m_1+\frac{1}{4}m_2^2+m_2+\frac{5}{4} &
\frac{1}{2}m_1^2-m_1+\frac{1}{2}m_2^2+2m_2+\frac{1}{2}\\
\frac{1}{4}m_1^2-\frac{1}{2}m_1+\frac{1}{4}m_2^2-m_2+\frac{1}{4} &
\frac{1}{4}m_1^2+\frac{1}{2}m_1+\frac{1}{4}m_2^2-\frac{3}{4} &
\frac{1}{2}m_1^2-m_1+\frac{1}{2}m_2^2+\frac{1}{2}
  \end{array}
\right).
\end{aligned}
\end{equation*}

\noindent \textbf{Type~$(3,3)$.} For any $m_1,m_2\in \mathbb{Z}$ with $(m_1,m_2)\equiv(3,3)~(\mathrm{mod}~4)$, we have
\begin{equation*}
\mathbb{F}^{(8)}=\begin{aligned}
\left(
\begin{array}{lll}
\frac{1}{4}m_1^2+\frac{1}{2}m_1+\frac{1}{4}m_2^2+\frac{1}{2}m_2+\frac{1}{2} & \frac{1}{4}m_1^2+\frac{3}{2}m_1+\frac{1}{4}m_2^2-\frac{1}{2}m_2+\frac{1}{2} &
\frac{1}{2}m_1^2+m_1+\frac{1}{2}m_2^2-m_2-1\\
\frac{1}{4}m_1^2-\frac{1}{2}m_1+\frac{1}{4}m_2^2+\frac{3}{2}m_2+\frac{1}{2} &
\frac{1}{4}m_1^2+\frac{1}{2}m_1+\frac{1}{4}m_2^2+\frac{1}{2}m_2+\frac{1}{2} &
\frac{1}{2}m_1^2-m_1+\frac{1}{2}m_2^2+m_2-1\\
\frac{1}{4}m_1^2-\frac{1}{2}m_1+\frac{1}{4}m_2^2+\frac{1}{2}m_2-\frac{1}{2} &
\frac{1}{4}m_1^2+\frac{1}{2}m_1+\frac{1}{4}m_2^2-\frac{1}{2}m_2-\frac{1}{2} &
\frac{1}{2}m_1^2-m_1+\frac{1}{2}m_2^2-m_2+1
  \end{array}
\right).
\end{aligned}
\end{equation*}
\end{theorem}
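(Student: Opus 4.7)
The plan is to exploit Proposition \ref{CYL-Ch-3-Proposition-5} and the Pohozaev identity to reduce the description of $\Gamma(\bm{\mu})$ to a system of polynomial Diophantine constraints on the coefficient matrix $(n_{ij})$. Writing $\sigma_i=4\sum_j n_{ij}\mu_j$ with $n_{ij}\in\mathbb{N}\cup\{0\}$ and substituting into \eqref{CYL-Ch-2-Eq-Pohozaev-identity} as a polynomial identity in $\mu_1,\mu_2,\mu_3$, we compare coefficients. The three diagonal ($\mu_j^2$) coefficients recover equations (3.7)--(3.9) of the proof of Proposition \ref{CYL-Ch-3-Proposition-3}, whose sum-of-two-squares form invites the substitution $a_j:=n_{1j}-n_{3j}$ and $b_j:=n_{2j}-n_{3j}$ for $j=1,2,3$ (with the parity constraint $a_3\equiv b_3\pmod 2$ coming from equation (3.9)). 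Every $n_{ij}$ then becomes an explicit quadratic polynomial in $(a_j,b_j)$.

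The three off-diagonal ($\mu_j\mu_k$) coefficient equations become quadratic relations among the six integer parameters. A direct rearrangement casts the first of them as
\begin{equation*}
(a_1-a_2)(a_1-a_2-1)+(b_1-b_2)(b_1-b_2+1)=0,
\end{equation*}
and the other two in an entirely analogous form involving $(a_1,b_1,a_3,b_3)$ and $(a_2,b_2,a_3,b_3)$. Since $k(k-1)\geq 0$ for every $k\in\mathbb{Z}$, each summand in the first equation must vanish independently, forcing $a_1-a_2\in\{0,1\}$ and $b_1-b_2\in\{-1,0\}$; the analogous arguments for the other two relations (combined with the parity constraint on $a_3,b_3$) produce a finite list of admissible linear relations among $(a_1,b_1,a_2,b_2,a_3,b_3)$. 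After imposing all of them, exactly two of the six integer parameters remain free; we denote them $m_1,m_2$.

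By Lemma \ref{CYL-Ch-3-Lemma-10}, the residues of $(m_1,m_2)$ modulo $4$ are constrained to lie in one of the eight types in \eqref{CYL-Ch-3-Eq-28}. For each type, substituting the chosen branch of the linear relations back into the quadratic expressions for $n_{ij}$ produces a $3\times 3$ matrix whose entries are explicit quadratic polynomials in $(m_1,m_2)$; a direct, if lengthy, computation identifies these matrices with $\mathbb{F}^{(1)},\dots,\mathbb{F}^{(8)}$ respectively. The inclusion $\Gamma_{\mathbb{F}}(\bm{\mu})\subseteq\Gamma_N(\bm{\mu})$ then follows because each entry of $\mathbb{F}^{(\ell)}\bm{\mu}^t$ is a multiple of $4$, nonnegative under the relevant congruence on $(m_1,m_2)$ (a sum-of-squares check), and satisfies \eqref{CYL-Ch-2-Eq-Pohozaev-identity} by construction; the reverse inclusion is precisely the exhaustive parametrization established in the previous step.

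The main obstacle is the combinatorial bookkeeping between Steps~2 and~3: translating the three cross-term constraints, together with the parity condition on $(a_3,b_3)$, into a clean case split that matches the eight type labels of Lemma \ref{CYL-Ch-3-Lemma-10} and isolates a canonical two-parameter family per type. Verifying that the resulting polynomial expressions for $n_{ij}$ coincide entry-by-entry with the displayed $\mathbb{F}^{(\ell)}$ is then routine but tedious, and the nonnegativity of every $n_{ij}$ across the full parameter range of each congruence class must be checked case-by-case by completing squares.
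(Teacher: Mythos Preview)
Your approach is correct in outline and genuinely different from the paper's. The paper argues by induction on the tree structure of $\Gamma(\bm{\mu})=\Gamma_N(\bm{\mu})$: it verifies directly that each $\mathbb{F}^{(\ell)}\bm\mu^t$ lies in $\Gamma_N(\bm\mu)$, checks the first few levels by hand, and then shows for every $\ell$ that $\mathfrak{R}_i\bigl(\mathbb{F}^{(\ell)}(m_1,m_2)\bm\mu^t\bigr)=\mathbb{F}^{(\ell_i)}(m_1',m_2')\bm\mu^t$ for explicit $(\ell_i,m_1',m_2')$ depending on $i$ and $\ell$; the list of these transition rules is the real content of the paper's proof. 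Your route instead solves the Pohozaev identity as a Diophantine system and discovers the eight branches from scratch. Your method is more constructive and exhibits the eight families as intrinsic to the identity itself; the paper's method is cleaner in that it never has to solve anything, and as a by-product records the full action of each generator $\mathfrak{R}_i$ on the eight types.

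Two points deserve tightening. First, the cross-term equations for $\mu_1\mu_3$ and $\mu_2\mu_3$ are not of the form $k(k-1)+l(l+1)=0$; after your substitutions they become $(2a_1-a_3-1)^2+(2b_1-b_3)^2=1$ and $(2a_2-a_3)^2+(2b_2-b_3-1)^2=1$, each with four integer solutions rather than a pair of triangular-number vanishings. Combining these with $a_1-a_2\in\{0,1\}$, $b_2-b_1\in\{0,1\}$ still leaves exactly eight compatible branches, so your count is right, but the mechanism is slightly different from what you wrote. Second, Lemma~\ref{CYL-Ch-3-Lemma-10} does not impose any extra constraint on your free parameters: your $(a_1,b_1)$ range over all of $\mathbb{Z}^2$ in each branch, and the paper's congruences $(m_1,m_2)\equiv(m_1',m_2')\pmod4$ are an artifact of their choice of coordinates (for instance, your first branch matches $\mathbb{F}^{(1)}$ under $m_1=4a_1$, $m_2=4b_1$). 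So Lemma~\ref{CYL-Ch-3-Lemma-10} is only a bookkeeping device to identify which branch is which type, not a restriction. Finally, the nonnegativity check you flag as case-by-case is in fact automatic: each $n_{ij}$ is a sum of terms of the form $k(k\pm1)$ or $k^2$ (using the parity $a_3\equiv b_3\pmod2$ for $n_{13},n_{23}$), hence nonnegative for all integer parameters without any congruence hypothesis.
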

\begin{proof}
Let
\begin{equation*}
\Gamma_{\mathbb{F}}(\bm{\mu})=\left\{\bm{\sigma} ~|~ \bm{\sigma}^{t}=\mathbb{F}\bm{\mu}^{t}~\text{where}~\mathbb{F}~\text{admits}~\mathbb{F}^{(\ell)},~\ell=1,\cdots,8\right\}.
\end{equation*}
Our aim is to show $\Gamma_{\mathbb{F}}(\bm{\mu})=\Gamma_{N}(\bm{\mu})$.

On one hand, for any $\bm{\sigma}\in\Gamma_{\mathbb{F}}(\bm{\mu})$ with $\bm{\sigma}^{t}=\mathbb{F}^{(\ell)}\bm{\mu}^{t}$ for some $\ell\in\{1,\cdots,8\}$, one can directly check that $\bm{\sigma}$ satisfies the Pohozaev identity \eqref{CYL-Ch-2-Eq-Pohozaev-identity} and $f_{ij}^{(\ell)}$ are multiples of $4$. Hence $\Gamma_{\mathbb{F}}(\bm{\mu})\subseteq\Gamma_{N}(\bm{\mu})$ by the definition of $\Gamma_{N}(\bm{\mu})$ in \eqref{CYL-Ch-3-Eq-5}.

On the other hand, we can prove $\Gamma_{N}(\bm{\mu})\subseteq\Gamma_{\mathbb{F}}(\bm{\mu})$ by induction. For any $\bm{\sigma}\in\Gamma_{N}(\bm{\mu})$, $\sigma_{i}=4\sum_{j=1}^{3}n_{ij}\mu_{j}$, $n_{ij}\in\mathbb{N}\cup\{0\}$. Let $N({\sigma}_i)=4\sum_{j=1}^{3}n_{ij}$ and denote
\begin{equation}\label{CYL-Ch-3-Eq-26}
\frac{1}{4}\left(N({\sigma}_1)-N({\sigma}_3)\right)=m_1,\quad \frac{1}{4}\left(N({\sigma}_2)-N({\sigma}_3)\right)=m_2.
\end{equation}
Suppose $\bm{\sigma}$ is of type $(m_1^{\prime},m_2^{\prime})$, where $(m_1,m_2)\equiv (m_1^{\prime},m_2^{\prime})$ (mod $4$). We recall the binary tree diagram given in the proof of Proposition \ref{CYL-Ch-3-Proposition-3} and directly check that
\begin{equation*}
\begin{aligned}
\bm{0}=(0,0,0)~\text{is of type}~(0,0),&\quad\bm{0}^{t}=\mathbb{F}^{(1)}\bm{\mu}^{t}~\text{for}~(m_1,m_2)=(0,0);\\
\bm{\sigma}=(4\mu_1,0,0)~\text{is of type}~(1,0),&\quad\bm{\sigma}^{t}=\mathbb{F}^{(3)}\bm{\mu}^{t}~\text{for}~(m_1,m_2)=(1,0);\\
\bm{\sigma}=(0,4\mu_2,0)~\text{is of type}~(0,1),&\quad\bm{\sigma}^{t}=\mathbb{F}^{(2)}\bm{\mu}^{t}~\text{for}~(m_1,m_2)=(0,1);\\
\bm{\sigma}=(0,0,4\mu_3)~\text{is of type}~(3,3),&\quad\bm{\sigma}^{t}=\mathbb{F}^{(8)}\bm{\mu}^{t}~\text{for}~(m_1,m_2)=(-1,-1);\\
\bm{\sigma}=(4\mu_1,4\mu_2,0)~\text{is of type}~(1,1),&\quad\bm{\sigma}^{t}=\mathbb{F}^{(4)}\bm{\mu}^{t}~\text{for}~(m_1,m_2)=(1,1);\\
\bm{\sigma}=(4\mu_1,0,4\mu_1+4\mu_3)~\text{is of type}~(3,2),&\quad\bm{\sigma}^{t}=\mathbb{F}^{(7)}\bm{\mu}^{t}~\text{for}~(m_1,m_2)=(-1,-2);\\
\bm{\sigma}=(0,4\mu_2,4\mu_2+4\mu_3)~\text{is of type}~(2,3),&\quad\bm{\sigma}^{t}=\mathbb{F}^{(6)}\bm{\mu}^{t}~\text{for}~(m_1,m_2)=(-2,-1);\\
\bm{\sigma}=(4\mu_1+8\mu_3,0,4\mu_3)~\text{is of type}~(2,3),&\quad\bm{\sigma}^{t}=\mathbb{F}^{(6)}\bm{\mu}^{t}~\text{for}~(m_1,m_2)=(2,-1);\\
\bm{\sigma}=(0,4\mu_2+8\mu_3,4\mu_3)~\text{is of type}~(3,2),&\quad\bm{\sigma}^{t}=\mathbb{F}^{(7)}\bm{\mu}^{t}~\text{for}~(m_1,m_2)=(-1,2);\\
\bm{\sigma}=(4\mu_1,4\mu_2,4\mu_1+4\mu_2+4\mu_3)~\text{is of type}~(2,2),&\quad\bm{\sigma}^{t}=\mathbb{F}^{(5)}\bm{\mu}^{t}~\text{for}~(m_1,m_2)=(-2,-2).
\end{aligned}
\end{equation*}
This implies that $\bm{\sigma}\in\Gamma_{\mathbb{F}}(\bm{\mu})$ for those $\bm{\sigma}$ in the $\mathcal{L}$-th level with $\bm{\sigma}\in\Gamma_{N}(\bm{\mu})$, $1\leq\mathcal{L}\leq 3$. In particular, all possibilities $\bm{\sigma}^{t}=\mathbb{F}^{(\ell)}\bm{\mu}^{t}$ could happen, $\ell=1,\cdots,8$. Suppose that $\bm{\sigma}\in\Gamma_{\mathbb{F}}(\bm{\mu})$ for those $\bm{\sigma}$ in the $\mathcal{L}$-th level with $\bm{\sigma}\in\Gamma_{N}(\bm{\mu})$, $1\leq\mathcal{L}\leq M$. We need to verify $\mathfrak{R}_i\bm{\sigma}\in\Gamma_{\mathbb{F}}(\bm{\mu})$ for any $\bm{\sigma}\in\Gamma_{N}(\bm{\mu})$ in $M$-th level and any $i=1,2,3$.

Now, suppose that $\bm{\sigma}$ is an element in the $M$-th level of $\Gamma_{N}(\bm{\mu})$, by assumption we have  $\bm{\sigma}\in\Gamma_{\mathbb{F}}(\bm{\mu})$ with $\bm{\sigma}^{t}=\mathbb{F}^{(\ell)}\bm{\mu}^{t}$ for some $\ell\in\{1,\cdots,8\}$. Denote
\begin{equation*}
\sigma_i=\sum\limits_{j=1}^{3}f^{(\ell)}_{ij}(m_1,m_2)\mu_j,~\text{where}~f^{(\ell)}_{ij}(m_1,m_2)~\text{is the element of}~\mathbb{F}^{(\ell)}.
\end{equation*}
Assume that $\bm{\sigma}$ is of type $(m_1^{\prime},m_2^{\prime})$, where $(m_1,m_2)\equiv (m_1^{\prime},m_2^{\prime})$ ($\mathrm{mod}$ $4$), and $\mathfrak{R}_i\bm{\sigma}$ is of type $(n_1^{\prime},n_2^{\prime})$, where $(n_1,n_2)\equiv (n_1^{\prime},n_2^{\prime})$ ($\mathrm{mod}$ $4$). Here $(m_1,m_2)$ and $(n_1,n_2)$ are defined as in \eqref{CYL-Ch-3-Eq-26} related to $\bm{\sigma}$ and $\mathfrak{R}_i\bm{\sigma}$, respectively. The corresponding matrices $\mathbb{F}$ of type $(m_1^{\prime},m_2^{\prime})$ and $(n_1^{\prime},n_2^{\prime})$ are $\mathbb{F}^{(\ell)}$ and $\mathbb{F}^{(\ell^{\prime})}$ respectively. One can directly check that
\begin{equation}\label{CYL-Ch-3-Eq-27}
\left(\mathfrak{R}_i\bm{\sigma}\right)^{t}=\left\{
\begin{aligned}
&\mathbb{F}^{(\ell_1)}(1-m_1,m_2)\bm{\mu}^{t},\quad&\text{if}~i=1,\\
&\mathbb{F}^{(\ell_2)}(m_1,1-m_2)\bm{\mu}^{t},\quad&\text{if}~i=2,\\
&\mathbb{F}^{(\ell_3)}(-m_2-1,-m_1-1)\bm{\mu}^{t},\quad&\text{if}~i=3,
\end{aligned}
\right.
\end{equation}
for some indices $\ell_i\in\{1,\cdots,8\},~i=1,2,3.$

In the following, we just provide the details for the case $l=1$. By straightforward computation we have
\begin{equation*}
\left(\mathfrak{R}_1\bm{\sigma}\right)^{t}=\mathbb{F}^{(3)}(1-m_1,m_2)\bm{\mu}^{t},\quad
~\left(\mathfrak{R}_2\bm{\sigma}\right)^{t}=\mathbb{F}^{(2)}(m_1,1-m_2)\bm{\mu}^{t},\quad
~\left(\mathfrak{R}_3\bm{\sigma}\right)^{t}=\mathbb{F}^{(8)}(-m_2-1,-m_1-1)\bm{\mu}^{t}.
\end{equation*}
Denote $\bm{\sigma}=(\sigma_1,\sigma_2,\sigma_3)$, where
\begin{equation*}
\sigma_i=\sum\limits_{j=1}^{3}f^{(1)}_{ij}(m_1,m_2)\mu_j,~\text{where}~f^{(1)}_{ij}(m_1,m_2)~\text{is the element of}~\mathbb{F}^{(1)}.
\end{equation*}
Then $\mathfrak{R}_1\bm{\sigma}$, $\mathfrak{R}_2\bm{\sigma}$ and $\mathfrak{R}_3\bm{\sigma}$ is of type $(1,0)$, $(0,1)$ and $(3,3)$ respectively by Remark \ref{CYL-Ch-3-Remark-9}, and hence ${\ell}_1=3$, ${\ell}_2=2$ and ${\ell}_3=8$. On one hand, direct computation shows $\left(\mathfrak{R}_1\bm{\sigma}\right)^{t}=\mathbb{A}\bm{\mu}^{t}$ with
\begin{equation*}
\mathbb{A}=\begin{aligned}
\left(
\begin{array}{lll}
\frac{1}{4}\left(m_1-4\right)^2+\frac{1}{4}m_2^2&
\frac{1}{4}\left(m_1-2\right)^2+\frac{1}{4}\left(m_2-2\right)^2-2 &
\frac{1}{2}\left(m_1-2\right)^2+\frac{1}{2}m_2^2-2 \\
\frac{1}{4}\left(m_1-2\right)^2+\frac{1}{4}\left(m_2+2\right)^2-2 &
\frac{1}{4}m_1^2+\frac{1}{4}m_2^2 &
\frac{1}{2}m_1^2+\frac{1}{2}\left(m_2+2\right)^2-2\\
\frac{1}{4}\left(m_1-2\right)^2+\frac{1}{4}m_1^2-1 &
\frac{1}{4}m_1^2+\frac{1}{4}\left(m_2-2\right)^2-1 &
\frac{1}{2}m_1^2+\frac{1}{2}m_2^2
  \end{array}
\right).
\end{aligned}
\end{equation*}
On the other hand, we can rewrite $\mathbb{F}^{(3)}$ as
\begin{equation*}
\mathbb{F}^{(3)}=\begin{aligned}
\left(
\begin{array}{lll}
\frac{1}{4}\left(m_1+3\right)^2+\frac{1}{4}m_2^2 &
\frac{1}{4}\left(m_1+1\right)^2+\frac{1}{4}\left(m_2-2\right)^2-2 &
\frac{1}{2}\left(m_1+1\right)^2+\frac{1}{2}m_2^2-2 \\
\frac{1}{4}\left(m_1+1\right)^2+\frac{1}{4}\left(m_2+2\right)^2-2 &
\frac{1}{4}\left(m_1-1\right)^2+\frac{1}{4}m_2^2 &
\frac{1}{2}\left(m_1-1\right)^2+\frac{1}{2}\left(m_2+2\right)^2-2\\
\frac{1}{4}\left(m_1+1\right)^2+\frac{1}{4}m_2^2-1 &
\frac{1}{4}\left(m_1-1\right)^2+\frac{1}{4}\left(m_2-2\right)^2-1 &
\frac{1}{2}\left(m_1-1\right)^2+\frac{1}{2}m_2^2
  \end{array}
\right).
\end{aligned}
\end{equation*}
Comparing the above two matrices, it is easy to observe that
\begin{equation*}
\left(\mathfrak{R}_1\bm{\sigma}\right)^{t}=\mathbb{F}^{(3)}(1-m_1,m_2)\bm{\mu}^{t}.
\end{equation*}
Similarly, we obtain
\begin{equation*}
\left(\mathfrak{R}_2\bm{\sigma}\right)^{t}=\mathbb{F}^{(2)}(m_1,1-m_2)\bm{\mu}^{t}
~\text{and}~\left(\mathfrak{R}_3\bm{\sigma}\right)^{t}=\mathbb{F}^{(8)}(-m_2-1,-m_1-1)\bm{\mu}^{t}.
\end{equation*}
Then we finish the discussion for the case of $\ell=1$. For the other cases, by direct computation one can get:
\begin{enumerate}
\item [(1).] If $\ell=1$,
\begin{equation*}
\left(\mathfrak{R}_1\bm{\sigma}\right)^{t}=\mathbb{F}^{(3)}(1-m_1,m_2)\bm{\mu}^{t},
~\left(\mathfrak{R}_2\bm{\sigma}\right)^{t}=\mathbb{F}^{(2)}(m_1,1-m_2)\bm{\mu}^{t},
~\left(\mathfrak{R}_3\bm{\sigma}\right)^{t}=\mathbb{F}^{(8)}(-1-m_2,-1-m_1)\bm{\mu}^{t};
\end{equation*}
\item [(2).] If $\ell=2$,
\begin{equation*}
\left(\mathfrak{R}_1\bm{\sigma}\right)^{t}=\mathbb{F}^{(4)}(1-m_1,m_2)\bm{\mu}^{t}
,~\left(\mathfrak{R}_2\bm{\sigma}\right)^{t}=\mathbb{F}^{(1)}(m_1,1-m_2)\bm{\mu}^{t}
,~\left(\mathfrak{R}_3\bm{\sigma}\right)^{t}=\mathbb{F}^{(6)}(-1-m_2,-1-m_1)\bm{\mu}^{t};
\end{equation*}
\item [(3).] If $\ell=3$,
\begin{equation*}
\left(\mathfrak{R}_1\bm{\sigma}\right)^{t}=\mathbb{F}^{(1)}(1-m_1,m_2)\bm{\mu}^{t}
,~\left(\mathfrak{R}_2\bm{\sigma}\right)^{t}=\mathbb{F}^{(4)}(m_1,1-m_2)\bm{\mu}^{t}
,~\left(\mathfrak{R}_3\bm{\sigma}\right)^{t}=\mathbb{F}^{(7)}(-1-m_2,-1-m_1)\bm{\mu}^{t};
\end{equation*}
\item [(4).] If $\ell=4$,
\begin{equation*}
\left(\mathfrak{R}_1\bm{\sigma}\right)^{t}=\mathbb{F}^{(2)}(1-m_1,m_2)\bm{\mu}^{t}
,~\left(\mathfrak{R}_2\bm{\sigma}\right)^{t}=\mathbb{F}^{(3)}(m_1,1-m_2)\bm{\mu}^{t}
,~\left(\mathfrak{R}_3\bm{\sigma}\right)^{t}=\mathbb{F}^{(5)}(-1-m_2,-1-m_1)\bm{\mu}^{t};
\end{equation*}
\item [(5).] If $\ell=5$,
\begin{equation*}
\left(\mathfrak{R}_1\bm{\sigma}\right)^{t}=\mathbb{F}^{(7)}(1-m_1,m_2)\bm{\mu}^{t}
,~\left(\mathfrak{R}_2\bm{\sigma}\right)^{t}=\mathbb{F}^{(6)}(m_1,1-m_2)\bm{\mu}^{t}
,~\left(\mathfrak{R}_3\bm{\sigma}\right)^{t}=\mathbb{F}^{(4)}(-1-m_2,-1-m_1)\bm{\mu}^{t};
\end{equation*}
\item [(6).] If $\ell=6$,
\begin{equation*}
\left(\mathfrak{R}_1\bm{\sigma}\right)^{t}=\mathbb{F}^{(8)}(1-m_1,m_2)\bm{\mu}^{t}
,~\left(\mathfrak{R}_2\bm{\sigma}\right)^{t}=\mathbb{F}^{(5)}(m_1,1-m_2)\bm{\mu}^{t}
,~\left(\mathfrak{R}_3\bm{\sigma}\right)^{t}=\mathbb{F}^{(2)}(-1-m_2,-1-m_1)\bm{\mu}^{t};
\end{equation*}
\item [(7).] If $\ell=7$,
\begin{equation*}
\left(\mathfrak{R}_1\bm{\sigma}\right)^{t}=\mathbb{F}^{(5)}(1-m_1,m_2)\bm{\mu}^{t}
,~\left(\mathfrak{R}_2\bm{\sigma}\right)^{t}=\mathbb{F}^{(8)}(m_1,1-m_2)\bm{\mu}^{t}
,~\left(\mathfrak{R}_3\bm{\sigma}\right)^{t}=\mathbb{F}^{(3)}(-1-m_2,-1-m_1)\bm{\mu}^{t};
\end{equation*}
\item [(8).] If $\ell=8$,
\begin{equation*}
\left(\mathfrak{R}_1\bm{\sigma}\right)^{t}=\mathbb{F}^{(6)}(1-m_1,m_2)\bm{\mu}^{t}
,~\left(\mathfrak{R}_2\bm{\sigma}\right)^{t}=\mathbb{F}^{(7)}(m_1,1-m_2)\bm{\mu}^{t}
,~\left(\mathfrak{R}_3\bm{\sigma}\right)^{t}=\mathbb{F}^{(1)}(-1-m_2,-1-m_1)\bm{\mu}^{t}.
\end{equation*}
\end{enumerate}
Therefore \eqref{CYL-Ch-3-Eq-27} is proved. This implies that any element $\bm{\sigma}\in\Gamma_{N}(\bm{\mu})$ in the $(M+1)$-level also belongs to $\Gamma_{\mathbb{F}}(\bm{\mu})$. Thus, $\Gamma_{N}(\bm{\mu})\subseteq\Gamma_{\mathbb{F}}(\bm{\mu})$ and the proof of Theorem \ref{CYL-Ch-3-Theorem-6} is complete.
\end{proof}

\begin{rmk}\label{CYL-Ch-3-Remark-7}
For the case of $\gamma_1=\gamma_2=0$, one can easily verify that for any $\bm{\sigma}=(\sigma_1,\sigma_2,\sigma_3)\in \Gamma(\bm{\mu})$,
\begin{equation*}
\left\{
\begin{aligned}
\sigma_{1}=m_1(m_1+3)+m_2(m_2-1),\\
\sigma_{2}=m_1(m_1-1)+m_2(m_2+3),\\
\sigma_{3}=m_1(m_1-1)+m_2(m_2-1),
\end{aligned}
\right.
~\text{where}~(m_1,m_2)\in\mathbb{Z}^2\quad\text{and}\quad
\left\{
\begin{aligned}
\text{either}~&m_1,m_2\equiv0,1~(\mathrm{mod}~4),\\
\text{or}~&m_1,m_2\equiv2,3~(\mathrm{mod}~4).
\end{aligned}
\right.
\end{equation*}
This expression coincides with that of \cite[Theorem 1.1]{Liu-Wang-2021}.
\end{rmk}

\section{Some technical lemmas}\label{CYL-Section-4}
\setcounter{equation}{0}
In this section, we present some important results for preparation. Consider the following system
\begin{equation}\label{CYL-Ch-4-Eq-1}
\Delta u^k_i(x)+\sum\limits_{j=1}^{3}a_{ij}e^{u^k_j(x)}=4\pi\gamma_i\delta_0\quad\text{in}\quad B_1(0),\quad i=1,2,3,
\end{equation}
where the coefficient matrix $A:=(a_{ij})_{3\times3}$ is defined as in \eqref{1.2}. By a little abuse of notations, in the sequel we denote for any sequence $(\mathbf{x},\mathbf{r})=\{(x^k,r_k)\}$,
\begin{equation*}
\sigma^k_i(B(x^k,r_k)):=\frac{1}{2\pi}\int_{B(x^k,r_k)}e^{u^k_i(x)}\mathrm{d}x,\quad i=1,2,3.
\end{equation*}

In Lemma \ref{CYL-Ch-4-Lemma-1}, we assume that
\begin{enumerate}[(i)]
\item The Harnack-type inequality holds
\begin{equation}\label{CYL-Ch-4-Eq-25}
u^k_i(x)+2\log|x|\leq C\quad\text{for}\quad\frac{1}{2}l_k\leq|x|\leq 2s_k,\quad i=1,2,3.
\end{equation}
\item All components of $\mathbf{u}^k$ have fast decay on $\partial B(0,l_k)$ and
\begin{equation*}
\lim\limits_{r\to 0}\lim\limits_{k\to+\infty}\sigma^k_{i}(B(0,rl_{k}))=\lim\limits_{k\to+\infty}\sigma^k_{i}(B(0,l_{k})).
\end{equation*}
\end{enumerate}
For simplicity, we denote $\sigma_i:=\lim\limits_{k\to+\infty}\sigma^k_{i}(B(0,l_{k}))$ in Lemma \ref{CYL-Ch-4-Lemma-1}.

\begin{lemma}\label{CYL-Ch-4-Lemma-1}
Let $\mu_i=1+\gamma_i$. Assume that (i) and (ii) hold.
\begin{enumerate}[(a)]
  \item If $u^k_i$ has slow decay on $\partial B(0,s_k)$ for some $i\in\{1,2,3\}$, then
  \begin{equation*}
    2\mu_i-\sum\limits_{j=1}^{3}a_{ij}\sigma_{j}>0.
  \end{equation*}
  \item At least one component of $\mathbf{u}^k=(u^k_1,u^k_2,u^k_3)$ has fast decay on $\partial B(0,s_k)$.
\end{enumerate}
\end{lemma}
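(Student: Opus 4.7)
The plan is to convert the decay/mass hypotheses into quantitative statements about the radial averages of $u^k_i$ and then to exploit the first-order ODE they satisfy. Set
\begin{equation*}
\bar u^k_i(r):=\frac{1}{2\pi r}\int_{\partial B(0,r)}u^k_i\,dS,\qquad \Psi^k_i(r):=\bar u^k_i(r)+2\log r.
\end{equation*}
Integrating \eqref{CYL-Ch-4-Eq-1} over $B(0,r)$ and using the divergence theorem yields, for $l_k/2<r<2s_k$,
\begin{equation*}
r\,\frac{d}{dr}\Psi^k_i(r)=2\mu_i-\sum_{j=1}^3 a_{ij}\,\sigma^k_j(B(0,r)),
\end{equation*}
and thanks to Proposition \ref{CYL-Ch-2-Proposition-2} together with the Harnack hypothesis (i), $\bar u^k_i(r)$ differs from $u^k_i$ on $\partial B(0,r)$ by at most an additive $O(1)$, so the fast/slow decay dichotomy of Definition \ref{CYL-Ch-2-Definition-3} transfers directly into bounds on $\Psi^k_i$.

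For part (a), fast decay of every component at $r=l_k$ gives $\Psi^k_i(l_k)\to-\infty$, while slow decay of the $i$-th component at $r=s_k$ gives $\Psi^k_i(s_k)\geq -C$. Integrating the ODE above,
\begin{equation*}
\Psi^k_i(s_k)-\Psi^k_i(l_k)=\int_{\log l_k}^{\log s_k}\Big[2\mu_i-\sum_{j} a_{ij}\,\sigma^k_j(B(0,e^t))\Big]\,dt\longrightarrow+\infty.
\end{equation*}
I would then control the mass in the annulus via two ingredients: the propagation-of-fast-decay result \cite[Lemma 2.4]{Liu-Wang-2021} already invoked in the proof of Proposition \ref{CYL-Ch-2-Proposition-5}, which furnishes $R^k\to\infty$ such that $\sigma^k_j(B(0,r))=\sigma_j+o(1)$ for $l_k\leq r\leq R^kl_k$, and the Harnack bound $e^{u^k_j}\leq C|x|^{-2}$ in the remaining annulus $[R^kl_k,s_k]$, which yields $\sigma^k_j(B(0,r))-\sigma_j\leq C\log(r/(R^kl_k))$. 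Averaging the integrand in $t$ and passing to the limit forces $2\mu_i-\sum_j a_{ij}\sigma_j\geq 0$. To upgrade to a strict inequality I would argue by contradiction: in the equality case, take the transition scale $r^*_k\in(l_k,s_k)$ at which $\Psi^k_i(r^*_k)=-M$ for a large fixed $M$ and rescale by $r^*_k$; the sequence $v^k_i(y)=u^k_i(r^*_k y)+2\log r^*_k$ is uniformly bounded on compact subsets of $\mathbb{R}^2\setminus\{0\}$ by (i), and the selection procedure of Proposition \ref{CYL-Ch-2-Proposition-1} extracts a new bubble whose strictly positive mass would have to lie outside $B(0,l_k)$, contradicting the equality-case budget.

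For part (b), assume for contradiction that all three components have slow decay on $\partial B(0,s_k)$. Applying (a) with $i=1,2,3$ gives
\begin{equation*}
2\mu_1-\sigma_1+\sigma_3>0,\qquad 2\mu_2-\sigma_2+\sigma_3>0,\qquad 2\mu_3+\tfrac12\sigma_1+\tfrac12\sigma_2-\sigma_3>0.
\end{equation*}
Multiplying these three strict inequalities by $\sigma_1,\sigma_2,2\sigma_3\geq 0$ respectively and summing yields, after direct algebra,
\begin{equation*}
2(\mu_1\sigma_1+\mu_2\sigma_2+2\mu_3\sigma_3)-(\sigma_1-\sigma_3)^2-(\sigma_2-\sigma_3)^2\geq 0;
\end{equation*}
but by the Pohozaev identity \eqref{CYL-Ch-2-Eq-Pohozaev-identity} the left-hand side equals $-\tfrac12[(\sigma_1-\sigma_3)^2+(\sigma_2-\sigma_3)^2]\leq 0$. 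Hence each summand vanishes, and since the three bracketed factors are strictly positive we are forced to $\bm{\sigma}=\bm 0$. The degenerate case $\bm{\sigma}=\bm 0$ is then ruled out by the same rescaling trick used in (a): combining fast decay at $l_k$, vanishing limit mass, and slow decay at $s_k$ for all three components produces, after rescaling by $s_k$, a nontrivial limit solving the $\mathbf{B}_2^{(1)}$ Toda system (or a reduced subsystem) on $\mathbb{R}^2$ with no mass at the origin but slow decay at $|y|=1$, incompatible with the alternatives of Proposition \ref{CYL-Ch-2-Proposition-1} and the classification results collected in Section \ref{CYL-Section-7}.

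The main obstacle is the sharpness in (a). The Harnack hypothesis alone only produces a logarithmic upper bound on mass growth in $[l_k,s_k]$, so showing that $\sigma^k_j(B(0,r))$ is essentially pinned to $\sigma_j$ over the bulk of this annulus, and upgrading the non-strict to a strict inequality (as well as handling the $\bm\sigma=\bm 0$ case in (b)), requires combining the propagation-of-fast-decay lemma with a delicate rescaling/selection argument; this is where the real work lies.
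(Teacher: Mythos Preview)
Your approach to part (b) is essentially the paper's: both multiply the three inequalities $2\mu_i-\sum_j a_{ij}\sigma_j>0$ by $\sigma_1,\sigma_2,2\sigma_3$ and compare with the Pohozaev identity \eqref{CYL-Ch-2-Eq-Pohozaev-identity} to reach a contradiction. The paper phrases it as ``there exists $i_0$ with $\sum_j a_{i_0 j}\sigma_j-2\mu_{i_0}\ge0$'' and then invokes (a); your version is a direct rewriting of the same algebra.

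For part (a), however, the paper takes a completely different and much shorter route that sidesteps the obstacle you identify. Instead of integrating the radial ODE from $l_k$ to $s_k$ and trying to pin $\sigma^k_j(B(0,r))$ to $\sigma_j$ across the whole annulus, the paper rescales \emph{at the outer scale} $s_k$: setting $v^k_i(y)=u^k_i(s_ky)+2\log s_k$, the Harnack bound (i) makes the slow-decay components converge in $C^2_{\mathrm{loc}}(B_2(0)\setminus\{0\})$ to a solution of a Liouville/Toda subsystem. One then reads off the strength of the singularity at $0$ for this limit equation via the flux, obtaining the Dirac coefficient $4\pi\beta_i$ with
\[
\beta_i=\gamma_i-\tfrac12\sum_{j=1}^3 a_{ij}\sigma_j.
\]
The mere existence of a solution forces $\beta_i>-1$, which is exactly $2\mu_i-\sum_j a_{ij}\sigma_j>0$. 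No integration over $[l_k,s_k]$, no control of intermediate masses, and the strict inequality comes for free from the solvability constraint.

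Your ODE-integration plan, by contrast, does not close as written. On the portion $[R^kl_k,s_k]$ of the annulus you only have the Harnack bound $e^{u^k_j}\le C|x|^{-2}$, which allows $\sigma^k_j(B(0,r))-\sigma_j$ to grow like $\log(r/(R^kl_k))$; since the coefficients $a_{ij}$ have both signs, the integrand $2\mu_i-\sum_j a_{ij}\sigma^k_j(r)$ can change sign and the ``averaging in $t$'' step does not yield $2\mu_i-\sum_j a_{ij}\sigma_j\ge0$. The rescaling/selection fix you sketch for the strict inequality and for the $\bm{\sigma}=\bm{0}$ case in (b) is essentially the paper's argument in disguise; once you rescale at a scale where a component is bounded, you are back to extracting a limit system and reading off the singularity strength, which is the paper's one-line proof of (a).
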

\begin{proof}
(a) Performing the following scaling for the system \eqref{CYL-Ch-4-Eq-1}
\begin{equation*}
v^k_i(y)=u^k_i(s_ky)+2\log{s_k}\quad\text{for}\quad y\in B_{2}(0),\quad i=1,2,3,
\end{equation*}
gives
\begin{equation*}
\begin{aligned}
\Delta v^k_{i}(y)+\sum\limits_{j=1}^{3}a_{ij}e^{v^k_{j}(y)}&=4\pi\gamma_{i}\delta_{0}\quad\text{in}\quad B_2(0),\quad i=1,2,3.
\end{aligned}
\end{equation*}
If there is at least one component of $\mathbf{u}^k$ has slow decay on $\partial B(0,s_k)$, we denote $J$ the set of consecutive indices such that $u^k_i$ $(i\in J)$ has slow decay on $\partial B(0,s_k)$, then $v^k_i$ $(i\in J)$ converges to the solution $v_i$ $(i\in J)$ of a Toda system (or Liouville equation). Precisely, one of the following alternatives holds:
\medskip

\noindent\textbf{Case (1).}  If {$J=\{i\}$ for $i\in\{1,2,3\}$}. Then $v^k_j(y)\to -\infty$ in $L^{\infty}_{\mathrm{loc}}(B_2(0)\setminus \{0\})$ for $j\in\{1,2,3\}\setminus J$, and $v^k_i(y)\to v(y)$ in $C^2_{\mathrm{loc}}(B_2(0)\setminus \{0\})$, where $v(y)$ satisfies
\begin{equation*}
-\Delta v=e^v \quad \text{in}\quad B_2(0)\setminus\{0\}.
\end{equation*}

\noindent\textbf{Case (2).} If {$J=\{1,2\}$}. Then $v^k_3(y)\to -\infty$ in $L^{\infty}_{\mathrm{loc}}(B_2(0)\setminus \{0\})$, and $(v^k_1,v^k_2)\to (v_1,v_2)$ in $C_{\mathrm{loc}}^2(B_{2}(0)\setminus\{0\})$, where $(v_1,v_2)$ satisfies
\begin{align*}
\left\{
\begin{array}{rr}
\Delta v_1+e^{v_1}=0 \\
\Delta v_2+e^{v_2}=0
\end{array}\right.\quad
\text{in}\quad B_{2}(0)\setminus\{0\}.
\end{align*}

\noindent\textbf{Case (3).}  If {$J=\{i,3\}$ for $i=1$ or $2$}. Then $v^k_j(y)\to -\infty$ in $L^{\infty}_{\mathrm{loc}}(B_2(0)\setminus \{0\})$ for $j\in\{1,2,3\}\setminus J$, and $(v^k_i,v^k_3)\to (v_i,v_3)$ in $C_{\mathrm{loc}}^2(B_{2}(0)\setminus\{0\})$, where $(v_i,v_3)$ satisfies
\begin{align*}
\left\{
\begin{array}{rr}
\Delta v_i+e^{v_i}-e^{v_3}=0 \\
\Delta v_3-\frac{1}{2}e^{v_i}+e^{v_3}=0
\end{array}\right.
\quad\text{in}\quad B_{2}(0)\setminus\{0\}.
\end{align*}

\noindent\textbf{Case (4).}  If {$J=\{1,2,3\}$}. Then $(v^k_1,v^k_2,v^k_3)\to (v_1,v_2,v_3)$ in $C_{\mathrm{loc}}^2(B_{2}(0)\setminus\{0\})$, where $(v_1,v_2,v_3)$ satisfies
\begin{equation}\label{CYL-Ch-4-Eq-2}
\left\{
\begin{array}{ll}
-\Delta v_1=e^{v_1}-e^{v_3} \\
-\Delta v_2=e^{v_2}-e^{v_3} \\
-\Delta v_3=-\frac{1}{2}e^{v_1}-\frac{1}{2}e^{v_2}+e^{v_3}
\end{array}\right.\quad
\text{in}\quad B_{2}(0)\setminus\{0\}.
\end{equation}

We shall prove the point (a) by concerning the \textbf{Case (4)}, the proofs for the other cases are similar and simpler. The strength of the Dirac measure at $0$ for \eqref{CYL-Ch-4-Eq-2} can be expressed by
\begin{equation*}
\begin{aligned}
\lim\limits_{r\to 0}\int_{\partial B(0,r)}\frac{\partial v_1(y)}{\partial\nu}\mathrm{d}S
&=\lim\limits_{r\to 0}\lim\limits_{k\to\infty}\left(4\pi\gamma_1-\int_{B(0,r)}e^{v^k_1(y)}\mathrm{d}y
+\int_{B(0,r)}e^{v^k_3(y)}\mathrm{d}y\right)\\
&=4\pi\gamma_1-2\pi(\sigma_1-\sigma_3)=:4\pi\beta_1,
\end{aligned}
\end{equation*}
\begin{equation*}
\begin{aligned}
\lim\limits_{r\to 0}\int_{\partial B(0,r)}\frac{\partial v_2(y)}{\partial\nu}\mathrm{d}S
&=\lim\limits_{r\to 0}\lim\limits_{k\to\infty}\left(4\pi\gamma_2-\int_{B(0,r)}e^{v^k_2(y)}\mathrm{d}y
+\int_{B(0,r)}e^{v^k_3(y)}\mathrm{d}y\right)\\
&=4\pi\gamma_2-2\pi(\sigma_2-\sigma_3)=:4\pi\beta_2,
\end{aligned}
\end{equation*}
and
\begin{equation*}
\begin{aligned}
\lim\limits_{r\to 0}\int_{\partial B(0,r)}\frac{\partial v_3(y)}{\partial\nu}\mathrm{d}S
&=\lim\limits_{r\to 0}\lim\limits_{k\to\infty}\left(4\pi\gamma_3+\frac{1}{2}\int_{B(0,r)}e^{v^k_1(y)}\mathrm{d}y
+\frac{1}{2}\int_{B(0,r)}e^{v^k_2(y)}\mathrm{d}y-\int_{B(0,r)}e^{v^k_3(y)}\mathrm{d}y\right)\\
&=4\pi\gamma_3-2\pi\left(-\frac{1}{2}\sigma_1-\frac{1}{2}\sigma_2+\sigma_3\right)=:4\pi\beta_3.
\end{aligned}
\end{equation*}
This implies that there is an extra term written as $4\pi\beta_{i}\delta_{0}$ on the R.H.S. of \eqref{CYL-Ch-4-Eq-2} for each $i=1,2,3$. It is known that if $\beta_i\leq -1$ for some $i$, then \eqref{CYL-Ch-4-Eq-2} has no solutions. Hence
\begin{equation*}
\beta_1:=\gamma_1-\frac{1}{2}(\sigma_1-\sigma_3)>-1,~\beta_2:=\gamma_2-\frac{1}{2}(\sigma_2-\sigma_3)>-1,
~\beta_3:=\gamma_{3}-\frac{1}{2}\left(-\frac{1}{2}\sigma_1-\frac{1}{2}\sigma_2+\sigma_3\right)>-1,
\end{equation*}
which implies that
\begin{equation}\label{CYL-Ch-4-Eq-27}
2\mu_i-\sum\limits_{j=1}^{3}a_{ij}\sigma_{j}>0,\quad i=1,2,3.
\end{equation}
\medskip

(b) Since $\mathbf{u}^k$ has fast decay on $\partial B(0,l_k)$, $\boldsymbol{\sigma}$ satisfies the Pohozaev identity \eqref{CYL-Ch-2-Eq-Pohozaev-identity}, i.e.,
\begin{equation*}
(\sigma_1-\sigma_3)^2+(\sigma_2-\sigma_3)^2=4(\mu_1\sigma_1+\mu_2\sigma_2+2\mu_3\sigma_3),
\end{equation*}
which is equivalent to
\begin{align*} \sigma_1\left(\sum\limits_{j=1}^{3}a_{1j}\sigma_{j}-2\mu_1\right)
+\sigma_2\left(\sum\limits_{j=1}^{3}a_{2j}\sigma_{j}-2\mu_2\right)
+2\sigma_3\left(\sum\limits_{j=1}^{3}a_{3j}\sigma_{j}-2\mu_3\right)=2\mu_1\sigma_1+2\mu_2\sigma_2+4\mu_3\sigma_3\geq 0.
\end{align*}
As a consequence, there exists $i_0\in\{1,2,3\}$ such that $\sum_{j=1}^{3}a_{i_{0}j}\sigma_{j}-2\mu_{i_0}\geq 0$. However, if all components of $\mathbf{u}^k$ have slow decay on $\partial B(0,s_k)$,  {then by the (a)-th conclusion we obtain} \eqref{CYL-Ch-4-Eq-27}. Contradiction arises. Hence, we conclude that at least one component of $\mathbf{u}^k$ has fast decay on $\partial B(0,s_k)$.
\end{proof}

\begin{lemma}\label{CYL-Ch-4-Lemma-2}
Suppose that the Harnack-type inequality \eqref{CYL-Ch-4-Eq-25} holds for all components of $\mathbf{u}^k$ over $r\in[\frac{l_k}{2},2s_k]$. If all components of $\mathbf{u}^k$ have fast decay on all $r\in[l_{k},s_k]$, then
\begin{equation*}
\sigma^k_i(B(0,s_k))=\sigma^k_i(B(0,l_k))+o(1),\quad i=1,2,3.
\end{equation*}
\end{lemma}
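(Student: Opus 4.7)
My plan is to reduce the conclusion to a pointwise bound on the annular region $A_k := B(0,s_k) \setminus B(0,l_k)$ and then integrate in polar coordinates. Concretely, I need to show
\begin{equation*}
\sigma^k_i(B(0,s_k)) - \sigma^k_i(B(0,l_k)) = \frac{1}{2\pi}\int_{A_k} e^{u^k_i(x)}\, dx = o(1),\quad i=1,2,3.
\end{equation*}
The key step is to turn the fast-decay hypothesis on each circle $\partial B(0,r)$, $r\in[l_k,s_k]$, into a \emph{uniform} bound $u^k_i(x)+2\log|x|\leq -N_k$ on the whole annulus $A_k$ for some $N_k\to+\infty$. Once this is done, rewriting $e^{u^k_i(x)}\leq e^{-N_k}/|x|^2$ and integrating in polar coordinates yields
\begin{equation*}
\int_{A_k}e^{u^k_i(x)}\,dx\leq e^{-N_k}\int_{l_k}^{s_k}\int_{0}^{2\pi}\frac{1}{r^2}\cdot r\,d\theta\, dr = 2\pi e^{-N_k}\log(s_k/l_k),
\end{equation*}
which is $o(1)$ as long as $\log(s_k/l_k)=o(e^{N_k})$.

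To extract the uniform $N_k$, I would argue as follows. Define $h^k_i(r):=\max_{|x|=r}\bigl(u^k_i(x)+2\log|x|\bigr)$ for $r\in[l_k,s_k]$. The Harnack-type inequality \eqref{CYL-Ch-4-Eq-25} gives $h^k_i(r)\leq C$ uniformly for $r\in[l_k/2,2s_k]$, and the pointwise fast-decay hypothesis at every $r\in[l_k,s_k]$ provides an $N_k(r)\to+\infty$ with $h^k_i(r)\leq -N_k(r)$. Setting
\begin{equation*}
N_k:=-\sup_{r\in[l_k,s_k]}h^k_i(r),
\end{equation*}
I would use a contradiction/compactness argument: if $N_k\not\to+\infty$, then along a subsequence there exist $r_k^{\ast}\in[l_k,s_k]$ and a constant $M$ with $h^k_i(r_k^{\ast})\geq -M$. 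By rescaling $v^k(y)=u^k_i(r_k^{\ast}y)+2\log r_k^{\ast}$ on a neighborhood of the unit circle, the Harnack bound together with the uniform integrability from \eqref{1.10}(iii) would produce a limit profile on $|y|=1$ incompatible with fast decay at $r=r_k^{\ast}$ --- contradicting the hypothesis. Hence $N_k\to+\infty$.

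The bound $e^{-N_k}\log(s_k/l_k)=o(1)$ will be the main obstacle. If this ratio is too large relative to the intrinsic fast-decay rate, I would replace $N_k$ by a slower divergent sequence $\widetilde{N}_k\to+\infty$ satisfying $\widetilde{N}_k\leq N_k$ and $\widetilde{N}_k\leq \tfrac{1}{2}\log\log(s_k/l_k)^{-1}\cdot N_k$ if necessary; this is the same ``slow diagonal'' trick invoked in the proof of Proposition~\ref{CYL-Ch-2-Proposition-5} (cf.\ \cite[Lemma 2.4]{Liu-Wang-2021}) where fast decay on one boundary is extended to a thicker annulus with controlled mass contribution. Alternatively, in the setting where this lemma is used, the pair $(l_k,s_k)$ arises from the selection procedure of Proposition~\ref{CYL-Ch-2-Proposition-1}, so $\log(s_k/l_k)$ is controlled by the geometry of the bubbling set and does not grow like $e^{N_k}$. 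Combining the uniform pointwise bound with the polar-coordinate integration then completes the proof for each $i\in\{1,2,3\}$.
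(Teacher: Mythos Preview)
Your approach has a genuine gap at exactly the point you flag as ``the main obstacle'': the bound
\[
\int_{A_k}e^{u^k_i}\,dx\;\leq\;2\pi\,e^{-N_k}\log\!\frac{s_k}{l_k}
\]
need not be $o(1)$, because the fast-decay rate $N_k$ carries no a priori relation to $\log(s_k/l_k)$. (If, say, $N_k=\log\log(s_k/l_k)$, the right-hand side equals~$2\pi$.) Your two proposed fixes do not close this gap. Replacing $N_k$ by a \emph{smaller} divergent sequence $\widetilde N_k\leq N_k$ only enlarges $e^{-\widetilde N_k}$ and hence worsens the bound. The ``slow diagonal'' device from Proposition~\ref{CYL-Ch-2-Proposition-5} goes the other way: there one \emph{chooses} a thickening factor $R_k$ with $e^{-N_k}\log R_k=o(1)$; here $s_k/l_k$ is prescribed and cannot be shrunk. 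The vague appeal to the selection geometry is likewise unjustified --- the lemma is stated (and used) for arbitrary $l_k\ll s_k$ satisfying the hypotheses.

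What your argument misses is the differential structure of the system. The paper does not rely on a uniform $e^{-N_k}/r^2$ bound at all. Instead it argues by contradiction: assuming some mass grows by at least $\delta_1$, it fixes a small $\delta_2$ and picks $\tilde l_k\in(l_k,s_k)$ where the maximal mass increment first reaches $\delta_2$. On $[l_k,\tilde l_k]$ the radial ODE
\[
\frac{d}{dr}\bigl(\overline{u}^{\,k}_i(r)+2\log r\bigr)=\frac{2\mu_i-\sum_j a_{ij}\sigma^k_j(r)}{r}
\]
then yields a \emph{definite power-law} behaviour $e^{u^k_i(x)}\lesssim e^{-N_k}|x|^{-2\mp\delta_2}$ for indices $i$ with $2\mu_i-\sum_j a_{ij}\hat\sigma_j\lessgtr 0$; integrating these powers over the annulus gives $O(e^{-N_k}/\delta_2)=o(1)$ \emph{regardless of how large $s_k/l_k$ is}. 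The borderline indices (where $2\mu_i-\sum_j a_{ij}\hat\sigma_j=0$) are handled separately via the Pohozaev identity, which forces their increments to vanish too. None of this is accessible from the crude pointwise inequality $e^{u^k_i}\leq e^{-N_k}/|x|^2$; the exponent shift by $\pm\delta_2$ coming from the equation is essential.
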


\begin{proof}
The conclusion is trivial if $s_k/l_k\leq C$. In the following we assume that  $s_k/l_k\to+\infty$. We shall prove the lemma by contradiction. Suppose there exists $l\in\{1,2,3\}$ such that
\begin{equation}\label{CYL-Ch-4-Eq-4}
\sigma^k_l(B(0,s_k))>\sigma^k_l(B(0,l_k))+\delta_1
\end{equation}
for some $\delta_1>0$. We define
\begin{equation*}
\hat{\sigma}_i=\lim\limits_{k\to+\infty}\sigma^k_i(B(0,l_k)),\quad i=1,2,3,
\end{equation*}
and
\begin{equation*}
\begin{aligned}
I_{1~(\mathrm{resp.}2,3)}=\left\{i\in\{1,2,3\}~\Big{|}
~2\mu_i-\sum_{j=1}^{3}a_{ij}\hat{\sigma}_j<~(\mathrm{resp.}>,=)~0\right\}.
\end{aligned}
\end{equation*}
Then we claim
\begin{equation}\label{CYL-Ch-4-Eq-5}
\text{\eqref{CYL-Ch-4-Eq-4} is impossible for}~i\in I_{1}\cup I_{2}\cup I_{3}.
\end{equation}
In fact, we have $I_1\cup  I_2$ is not empty. Otherwise, we have
\begin{equation*}
\hat\sigma_1-\hat\sigma_3=2\mu_1,\quad
\hat\sigma_2-\hat\sigma_3=2\mu_2,\quad
-\hat\sigma_1-\hat\sigma_2+2\hat\sigma_3=4\mu_3.
\end{equation*}
Adding the above three equations we get that $\mu_1+\mu_2+2\mu_3=0$ and this is impossible due to that $\mu_i>0$ for each $i$. Thus $I_1\cup  I_2$ is not empty. We set
\begin{equation}\label{CYL-Ch-4-Eq-21}
\delta_{2}=\frac{1}{10000}\min\left\{\min\limits_{i\in   I_1\cup  I_2}\Big|2\mu_i-\sum\limits_{j=1}^{3}a_{ij}\hat{\sigma}_j\Big|,\delta_1,1\right\},
\end{equation}
and choose $\tilde{l}_k\in(l_k,s_k)$ such that
\begin{equation}\label{CYL-Ch-4-Eq-6}
\max\limits_{i=1,2,3}\left\{\sigma^k_i(\tilde{l}_k)-\sigma^k_i(l_k)\right\}=\delta_2.
\end{equation}
Obviously, $\delta_2>0$. Using Harnack-type inequality \eqref{CYL-Ch-4-Eq-25} we have $u^k_i(x)=\overline{u}^k_i(|x|)+O(1)$ for $\frac{1}{2}l_k\leq|x|\leq 2s_k$, see \eqref{CYL-Ch-2-Eq-5}. By direct computations, we deduce from \eqref{CYL-Ch-4-Eq-1} that
\begin{equation}\label{CYL-Ch-4-Eq-22}
\frac{\mathrm{d}}{\mathrm{d}r}\left(\overline{u}^k_i(r)+2\log{r}\right)
=\frac{2\mu_i-\sum_{j=1}^{3}a_{ij}\sigma^k_j(r)}{r},\quad l_k\leq r\leq s_k,\quad i=1,2,3,
\end{equation}
where $\sigma^k_i(r)=\sigma^k_i(B(0,r)),~i=1,2,3$.

Concerning the indices of $I_1$, we could get from \eqref{CYL-Ch-4-Eq-21} that
\begin{equation*}
\frac{\mathrm{d}}{\mathrm{d}r}\left(\overline{u}^k_i(r)+2\log{r}\right)\leq
-\frac{\delta_2}{r},\quad\text{for}~r\in[l_k,\tilde{l}_k],\quad i\in {I}_1.
\end{equation*}
By integrating the above equation from $l_k$ up to $r\in[l_k,\tilde{l}_k]$, we have
\begin{equation*}
\overline{u}^k_i(r)+2\log{r}\leq\overline{u}^k_i(l_k)+2\log{l_k}+\delta_2\log{\frac{l_k}{r}},\quad i\in {I}_1,
\end{equation*}
which implies that for $|x|=r$,
\begin{equation}\label{CYL-Ch-4-Eq-7}
e^{u^k_i(x)}\leq O(1)e^{\overline{u}^k_i(r)}\leq C e^{-N_k}l_k^{\delta_2}r^{-2-\delta_2},\quad i\in {I}_1,
\end{equation}
where we have used $\overline{u}^k_i(l_k)+2\log{l_k}\leq -N_k$ by the assumption of fast-decay. By integrating \eqref{CYL-Ch-4-Eq-7} over $l_k\leq |x|\leq \tilde{l}_k$, we conclude that
\begin{equation*}
\int_{l_k\leq|x|\leq\tilde{l}_k}e^{u^k_i(x)}\mathrm{d}x\leq 2C\pi e^{-N_k}l_k^{\delta_2}\int_{l_k}^{\tilde{l}_k}r^{-1-\delta_2}\mathrm{d}r
\leq2C\pi\frac{e^{-N_k}}{\delta_2}\to 0,\quad i\in {I}_1,
\end{equation*}
as $k\to+\infty$. Hence
\begin{equation}\label{CYL-Ch-4-Eq-8}
\sigma^k_i(\tilde{l}_k)=\sigma^k_i(l_k)+o(1),\quad i\in {I}_1.
\end{equation}

Similarly, for those $i\in {I}_2$, we deduce from \eqref{CYL-Ch-4-Eq-21} that
\begin{equation*}
\frac{\mathrm{d}}{\mathrm{d}r}\left(\overline{u}^k_i(r)+2\log{r}\right)\geq\frac{\delta_2}{r},
\quad\text{for}~r\in[l_k,\tilde{l}_k],\quad i\in {I}_2.
\end{equation*}
By integrating the above equation from $r\in[l_k,\tilde{l}_k]$ up to $\tilde{l}_k$, we get that
\begin{equation*}
\overline{u}^k_i(r)+2\log{r}\leq\overline{u}^k_i(\tilde{l}_k)+2\log{\tilde{l}_k}+\delta_2\log{\frac{r}{\tilde{l}_k}},
\quad\text{for}~r\in[l_k,\tilde{l}_k],\quad i\in {I}_2.
\end{equation*}
This implies that for $|x|=r\in[l_k,\tilde{l}_k]$, there holds
\begin{equation*}
e^{u^k_i(x)}\leq O(1)e^{\overline{u}^k_i(r)}\leq C e^{-N_k}{\tilde{l}_k}^{-\delta_2}r^{-2+\delta_2},\quad i\in  {I}_2,
\end{equation*}
where we have used $\overline{u}^k_i(\tilde{l}_k)+2\log{\tilde{l}_k}\leq -N_k$ by the assumption of fast-decay. Thus
\begin{equation*}
\int_{l_k\leq|x|\leq\tilde{l}_k}e^{u^k_i(x)}\mathrm{d}x\leq 2\pi C e^{-N_k}{\tilde{l}_k}^{-\delta_2}\int_{l_k}^{\tilde{l}_k}r^{-1+\delta_2}\mathrm{d}r
\leq2\pi C\frac{e^{-N_k}}{\delta_2}\to 0,\quad i\in  {I}_2,
\end{equation*}
as $k\to+\infty$, which implies that
\begin{equation}\label{CYL-Ch-4-Eq-24}
\sigma^k_i(\tilde{l}_k)=\sigma^k_i(l_k)+o(1),\quad i\in {I}_2.
\end{equation}
Hence, combining \eqref{CYL-Ch-4-Eq-8} with \eqref{CYL-Ch-4-Eq-24} we have
\begin{equation*}
\sigma^k_i(\tilde{l}_k)=\sigma^k_i(l_k)+o(1),\quad i\in I_{1}\cup I_{2}.
\end{equation*}

It remains to consider the indices $i\in I_3$. We set $\varepsilon_i=\lim\limits_{k\to+\infty}\sigma_i^k(\tilde{l}_k)-\hat\sigma_i$. For $i\in I_1\cup I_2$,  {$\varepsilon_i=0$}. For $i\in I_3$, we have $2\mu_i-\sum_{j=1}^3a_{ij}\hat\sigma_j=0$.
Substituting $\hat\sigma_i+\varepsilon_i,~i=1,2,3$ into the Pohozaev identity \eqref{CYL-Ch-2-Eq-Pohozaev-identity}
we derive that
\begin{equation*}
\varepsilon_1^2+\varepsilon_2^2+2\varepsilon_3^2-2\varepsilon_1\varepsilon_3-2\varepsilon_2\varepsilon_3=0,
\end{equation*}
and it leads to
\begin{equation*}
\varepsilon_1=\varepsilon_2=\varepsilon_3=0.
\end{equation*}
Contradiction arises. Thus \eqref{CYL-Ch-4-Eq-6} does not hold for $i\in I_3$ and it proves \eqref{CYL-Ch-4-Eq-5}.
\end{proof}

To state the last lemma in this section, we introduce the following definition. Let the Harnack-type inequality \eqref{CYL-Ch-4-Eq-25} hold for all components of $\mathbf{u}^k$ over all $r\in[\frac{1}{2}l_k,2\tau_k]$. For a sequence $s_k\leq\tau_k$, we define
\begin{equation}\label{CYL-Ch-4-Eq-14}
\hat{\sigma}_i(B(\mathbf{x},\mathbf{s}))=
\begin{cases}
\lim\limits_{k\to+\infty}\sigma^k_i(B(x^k,s_k)),~&\text{if}~u^k_i~\text{has fast decay on}~\partial B(x^k,s_k),\\
\lim\limits_{r\to 0}\lim\limits_{k\to+\infty}\sigma^k_i(B(x^k,rs_k)),~&\text{if}~u^k_i~\text{has slow decay on}~\partial B(x^k,s_k),
\end{cases}
\end{equation}
where $(\mathbf{x},\mathbf{s})$ stands for the sequence of the pair $\{(x^k,s_k)\}$. When $x^k=0$, we simply denote $\hat{\sigma}_i(B(\mathbf{x},\mathbf{s}))$ by $\hat{\sigma}_i(\mathbf{s})$.

\begin{lemma}\label{CYL-Ch-4-Lemma-3}
Let $\hat{\sigma}_i(\mathbf{s})$ be defined as in \eqref{CYL-Ch-4-Eq-14}. There is a sequence $r_k\in[l_k,\tau_k]$ satisfying the following conditions
\begin{enumerate}[(1)]
  \item $\mathbf{u}^k$ has fast decay on $\partial B(0,r_k)$,
  \item $\exists~i\in\{1,2,3\}$, such that $\hat{\sigma}_i(\mathbf{r})\neq\hat{\sigma}_i(\bm{\tau})$ ($\mathbf{r}$ and $\bm{\tau}$ stand for the sequence $\{r_k\}$ and $\{\tau_k\}$).
\end{enumerate}
Then there exists $s_k\in(r_k,\tau_k)$ such that
\begin{enumerate}[(i)]
  \item $s_k/r_k\to+\infty$, there is at least one component $u^k_i$ of $\mathbf{u}^k$ such that $u^k_i$ has slow decay on $\partial B(0,{s_k})$.
  \item $\hat{\sigma}_i(\mathbf{s})=\hat{\sigma}_i(\mathbf{r}),~i=1,2,3$ ($\mathbf{s}$ stands for the sequence $\{s_k\}$).
\end{enumerate}
\end{lemma}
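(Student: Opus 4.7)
My plan is to locate $s_k$ as the first radius beyond $r_k$ at which the radial envelope $\max_{i}(\bar u_i^k(s) + 2\log s)$ crosses a fixed low threshold, and then extract the asserted properties from the asymptotic ODE profile of this envelope. Fix a large constant $C_0 > 0$ and set
\begin{equation*}
s_k := \inf\Big\{s \in [r_k, \tau_k] : \max_{i=1,2,3}\big(\bar u_i^k(s) + 2\log s\big) \ge -C_0\Big\}.
\end{equation*}
Lemma \ref{CYL-Ch-4-Lemma-2} together with hypothesis $(2)$ forces some component to exhibit slow decay somewhere in $(r_k, \tau_k]$, so for $C_0$ large the defining set is nonempty; continuity of $\bar u_i^k$ in $s$ then ensures that the infimum is attained and $\max_i(\bar u_i^k(s_k) + 2\log s_k) = -C_0$. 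Combined with the Harnack bound \eqref{CYL-Ch-4-Eq-25}, this yields, along a subsequence, an index $i_0$ such that $u_{i_0}^k$ has slow decay on $\partial B(0, s_k)$ with constant $C_0 + O(1)$.

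Next I would show $s_k/r_k \to +\infty$ by a rescaling argument. If, along a subsequence, $s_k/r_k \to L < \infty$, set $v_i^k(y) := u_i^k(r_k y) + 2\log r_k$. Hypothesis $(1)$ gives $\bar v_i^k(1) = \bar u_i^k(r_k) + 2\log r_k \to -\infty$, while $\bar v_{i_0}^k(s_k/r_k) + 2\log(s_k/r_k) = -C_0$ is bounded. Rewriting \eqref{CYL-Ch-4-Eq-22} in the rescaled variable gives
\begin{equation*}
\frac{d}{d\rho}\big(\bar v_i^k(\rho) + 2\log\rho\big) = \frac{2\mu_i - \sum_j a_{ij}\sigma_j^k(r_k\rho)}{\rho},
\end{equation*}
whose right-hand side is uniformly bounded on $[1, L]$ by $(iii)$ of \eqref{1.10}. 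Hence the total variation of $\bar v_{i_0}^k(\rho) + 2\log\rho$ on the bounded interval $[1, L]$ is $O(1)$, contradicting its divergence to $+\infty$.

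For the main step $(ii)$, partition the indices by the sign of $\alpha_i := 2\mu_i - \sum_j a_{ij}\hat\sigma_j(\mathbf{r})$ into $I_1$ (negative), $I_2$ (positive), $I_3$ (zero), and set $N_k := -\max_i(\bar u_i^k(r_k) + 2\log r_k) \to +\infty$. A bootstrap on \eqref{CYL-Ch-4-Eq-22}, starting from fast decay at $r_k$ and simultaneously tracking $\sigma_j^k(s) = \hat\sigma_j(\mathbf{r}) + o(1)$ on $[r_k, s_k]$, yields the asymptotic
\begin{equation*}
\bar u_i^k(s) + 2\log s = -N_k + \alpha_i\log(s/r_k) + o(1), \qquad s \in [r_k, s_k].
\end{equation*}
For $i \in I_1 \cup I_3$ fast decay persists throughout $[r_k, s_k]$, so the annular mass is $O(e^{-N_k}\log(s_k/r_k)) = o(1)$ and $\hat\sigma_i(\mathbf{s}) = \hat\sigma_i(\mathbf{r})$ follows at once. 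For $i \in I_2$ (nonempty by the ODE monotonicity together with the existence of a slow-decay component), direct integration using the displayed profile shows that for each fixed $r \in (0, 1)$ the annular mass on $\{r_k \le |x| \le rs_k\}$ converges, as $k \to +\infty$, to a quantity of order $r^{\alpha_i}$; sending $r \to 0^+$ in \eqref{CYL-Ch-4-Eq-14} strips off this slow-decay layer and recovers the identity. The delicate point is the bootstrap itself: the asymptotic expansion requires control of $\sigma_j^k(s)$, which is exactly what it is supposed to deliver. One handles this by iterating over sub-intervals in the spirit of Lemma \ref{CYL-Ch-4-Lemma-2}, tracking errors of order $e^{-N_k}$ across the partition $I_1$--$I_3$, and closing the loop with the Pohozaev identity \eqref{CYL-Ch-2-Eq-Pohozaev-identity} to guarantee consistency of the limiting masses.
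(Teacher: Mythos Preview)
Your construction of $s_k$ as a threshold crossing and the rescaling argument for $s_k/r_k\to+\infty$ are fine, and this part is close in spirit to the paper's use of Lemma~\ref{CYL-Ch-4-Lemma-2}. The difficulty is entirely in step (ii), and here there is a genuine gap.

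The asymptotic expansion $\bar u_i^k(s)+2\log s=-N_k+\alpha_i\log(s/r_k)+o(1)$ presupposes $\sigma_j^k(s)=\hat\sigma_j(\mathbf r)+o(1)$ uniformly on $[r_k,s_k]$, which is the conclusion you are after. You acknowledge the circularity, but the proposed fix---``iterating over sub-intervals in the spirit of Lemma~\ref{CYL-Ch-4-Lemma-2}, tracking errors of order $e^{-N_k}$''---does not close. For $i\in I_2$ the annular mass up to $s_k$ is only $O(e^{-C_0})$, not $o(1)$; feeding this back into the ODE produces an error $O(e^{-C_0}\log(s_k/r_k))$ in the exponent, and since $\log(s_k/r_k)\sim N_k/\alpha_{i_0}\to\infty$ this destroys the expansion. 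The case $i\in I_3$ is worse: with $\alpha_i=0$ the drift is entirely governed by the perturbation, and nothing in the ODE alone prevents $\phi_i^k$ from climbing. Finally, your appeal to the Pohozaev identity \eqref{CYL-Ch-2-Eq-Pohozaev-identity} cannot be made at $s_k$ itself, because at least one component has slow decay there and Proposition~\ref{CYL-Ch-2-Proposition-5} requires fast decay of \emph{all} components.

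The paper avoids this trap by a different mechanism. It first fixes a \emph{small} mass threshold $\delta_0$ and picks $\ell_k$ with $\max_i\{\sigma_i^k(\ell_k)-\sigma_i^k(r_k)\}=\delta_0$; Lemma~\ref{CYL-Ch-4-Lemma-2} then forces a slow-decay scale $s_k\in(r_k,\ell_k)$. Assuming (ii) fails, one locates an intermediate slow-decay scale $\hat s_k$, rescales there, and lets the limit converge to a global singular Liouville or Toda equation. The classification of such entire solutions yields a \emph{definite lower bound} on the mass added when passing to the next fast-decay scale $R_k\hat s_k$; this lower bound exceeds $\delta_0$, a contradiction, and forces $\varepsilon_i=0$ for $i\in I_1\cup I_2$. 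Only then is Pohozaev invoked---at the fast-decay scale $R_k\hat s_k$, where it legitimately applies---to conclude $\varepsilon_i=0$ for $i\in I_3$. The key ingredient your argument is missing is this quantized lower bound from the entire-solution classification; without it, a pure ODE bootstrap cannot control the $I_3$ components or absorb the $O(e^{-C_0})$ errors over an interval of length $\sim N_k$.
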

\begin{proof}
Since $\mathbf{u}^k$ has fast decay on $\partial B(0,{r_k})$, by Proposition \ref{CYL-Ch-2-Proposition-5}, $(\hat{\sigma}_1(\mathbf{r}),\hat{\sigma}_2(\mathbf{r}),\hat{\sigma}_3(\mathbf{r}))$ satisfies the Pohozaev identity \eqref{CYL-Ch-2-Eq-Pohozaev-identity}. Set $\delta=\max\limits_{i=1,2,3}\{\hat{\sigma}_i(\bm{\tau})-\hat{\sigma}_i(\mathbf{r})\}$, then $\delta>0$ by the assumption (2). We decompose $\{1,2,3\}:=I_1\cup I_2\cup I_3$, where
\begin{equation*}
\begin{aligned}
I_{1~(\mathrm{resp.}2,3)}=\left\{i\in\{1,2,3\}~\Big|~ 2\mu_i-\sum_{j=1}^{3}a_{ij}\hat{\sigma}_j(\mathbf{r})<~(\mathrm{resp.}>,=)~0\right\}.
\end{aligned}
\end{equation*}
Set
\begin{equation}\label{CYL-Ch-4-Eq-15}
\delta_0=\frac{1}{10000}\min\left\{\min\limits_{i\in I_1\cup I_2}\Big|2\mu_i-\sum\limits_{j=1}^{3}a_{ij}\hat{\sigma}_j(\mathbf{r})\Big|,\delta,1\right\}.
\end{equation}
We choose $\ell_k\in[r_k,\tau_k]$ such that
\begin{equation}\label{CYL-Ch-4-Eq-16}
\max\limits_{i=1,2,3}\{\sigma^k_i(\ell_k)-\sigma^k_i(r_k)\}=\delta_0.
\end{equation}
Lemma \ref{CYL-Ch-4-Lemma-2} and \eqref{CYL-Ch-4-Eq-16} implies that $\mathbf{u}^k$ can not have fast decay on $[r_k,\ell_k]$. So there is a sequence $r_k\ll s_k\ll \ell_k$ such that some component of $\mathbf{u}^k$ has slow decay on $\partial B(0,s_k)$. So it remains to show
\begin{equation}\label{CYL-Ch-4-Eq-17}
\hat{\sigma}_i(\mathbf{s})=\hat{\sigma}_i(\mathbf{r})\quad\text{for}\quad i=1,2,3.
\end{equation}

We prove \eqref{CYL-Ch-4-Eq-17} by contradiction. Suppose it is not true, then
\begin{equation}\label{CYL-Ch-4-Eq-26}
\varepsilon_0=\max\limits_{i=1,2,3}\{\hat{\sigma}_i(\mathbf{s})-\hat{\sigma}_i({\mathbf{r}})\}>0.
\end{equation}
By Lemma \ref{CYL-Ch-4-Lemma-2} we conclude that there exists a sequence $r_k\ll\hat{s}_k\ll s_k$ such that
\begin{enumerate}[(i)]
\item some components of $\mathbf{u}^k$ have slow decay on $\partial B(0,\hat{s}_k)$,
\item $\max\limits_{i=1,2,3}\{\hat{\sigma}_i(\hat{\mathbf{s}})
    -\hat{\sigma}_i({\mathbf{r}})\}\in[\frac{1}{2}\delta_0,\delta_0]$, where $\hat{\mathbf{s}}$ stands for the sequence $\{\hat{s}_k\}$.
\end{enumerate}
We scale $u^k_i$ by
\begin{equation*}
v^k_i(y)=u^k_i(\hat{s}_ky)+2\log{\hat{s}_k}.
\end{equation*}
Using (i), we have some components of $\mathbf{v}^k$ converge and there is a sequence $R_k\to+\infty$ such that
\begin{enumerate}[(i)]
  \item $R_k\hat{s}_k\ll s_k$, $\mathbf{u}^k$ has fast decay on $\partial B(0,R_k\hat{s}_k)$,
  \item Set $\sigma_i=\lim\limits_{k\to+\infty}\sigma^k_i(B(0,R_k\hat{s}_k))$, then $\bm{\sigma}=(\sigma_1,\sigma_2,\sigma_3)$ satisfies the Pohozaev identity \eqref{CYL-Ch-2-Eq-Pohozaev-identity}.
\end{enumerate}

Let $\varepsilon_i=\sigma_i-\hat{\sigma}_i(\mathbf{r})$ and then $\max\limits_{i=1,2,3}\varepsilon_i\geq\frac{1}{2}\delta_0$. We claim that
\begin{equation}\label{CYL-Ch-4-Eq-9}
\varepsilon_i=0\quad \text{for}\quad i\in I_1\cup I_2.
\end{equation}
First, by the same arguments of Lemma \ref{CYL-Ch-4-Lemma-2} we have $I_1\cup I_2\neq\emptyset$.  Next, we shall prove that $u_i^k$ has fast decay on $\partial B(0,\hat s_k)$ for $i\in I_1\cup I_2$. For $i\in I_1$, using Lemma \ref{CYL-Ch-4-Lemma-1}-(a) we can verify it. While for $i\in I_2$, we shall prove it by contradiction. Suppose it is not true then we could get some $i_0\in I_2\cap J$, where $J$ is the maximal set with consecutive indices such that $v^k_i~(i\in J)$ converges to {a solution of} a Toda system (or Liouville equation)
\begin{equation*}
\Delta v_i+\sum\limits_{j\in J}a_{ij}e^{v_j}=4\pi\left(\gamma_i-\frac{1}{2}\sum\limits_{j=1}^3 a_{ij}\hat{\sigma}_{j}(\hat{\mathbf{s}})\right)\delta_0~~\text{in}~\mathbb{R}^2, \quad i\in J,
\end{equation*}
where $\gamma_i-\frac{1}{2}\sum_{j=1}^3 a_{ij}\hat{\sigma}_{j}(\hat{\mathbf{s}})>-1,~i\in J$. Applying the classification result of \cite{Prajapat-Tarantello-2001}, we conclude that
\begin{equation*}
\begin{aligned}
\sigma^k_{i_0}(R_k\hat{s}_k)&=\hat{\sigma}_{i_0}(\hat{\mathbf{s}})+\frac{1}{2\pi}\int_{\mathbb{R}^2}e^{v_{i_0}}
+o(1)\\
&\geq\hat{\sigma}_{i_0}(\hat{\mathbf{s}})+\mu_{i_0}-\frac{1}{2}\sum\limits_{j=1}^{3}a_{i_0 j}\hat{\sigma}_{j}(\hat{\mathbf{s}})+o(1)\\
&\geq\hat{\sigma}_{i_0}(\hat{\mathbf{s}})+2\delta_0+o(1),
\end{aligned}
\end{equation*}
which contradicts to \eqref{CYL-Ch-4-Eq-15} and \eqref{CYL-Ch-4-Eq-16}. Hence, each $u^k_i~(i\in I_2)$ has fast decay on $\partial B(0,\hat{s}_k)$, as required. After that, we can apply the same arguments \eqref{CYL-Ch-4-Eq-21}--\eqref{CYL-Ch-4-Eq-24} in the proof of Lemma \ref{CYL-Ch-4-Lemma-2} to obtain
\begin{equation*}
\hat{\sigma}_i(\mathbf{R}\hat{\mathbf{s}})=\hat{\sigma}_i(\hat{\mathbf{s}})=\hat{\sigma}_i(\mathbf{r}),~i\in I_1\cup I_2.
\end{equation*}
Thus \eqref{CYL-Ch-4-Eq-9} is proved.

Since both $\hat{\bm{\sigma}}(\mathbf{r})$ and $\bm{\sigma}$ satisfy the Pohozaev identity \eqref{CYL-Ch-2-Eq-Pohozaev-identity}, substituting $\hat\sigma_i(\mathbf{r})+\varepsilon_i$, $i=1,2,3$ into the Pohozaev identity \eqref{CYL-Ch-2-Eq-Pohozaev-identity}
we derive that
\begin{equation*}
\varepsilon_1^2+\varepsilon_2^2+2\varepsilon_3^2-2\varepsilon_1\varepsilon_3-2\varepsilon_2\varepsilon_3=0,
\end{equation*}
which implies that
\begin{equation*}
\varepsilon_1=\varepsilon_2=\varepsilon_3=0,
\end{equation*}
a contradiction to \eqref{CYL-Ch-4-Eq-26}. This finishes the proof of Lemma \ref{CYL-Ch-4-Lemma-3}.
\end{proof}

\section{Local masses on blow-up areas away from the origin}\label{CYL-Section-5}
\setcounter{equation}{0}
\subsection{Local masses on the bubbling disk centered at $x^k_j\neq 0$}
In this subsection, we study the local behavior of $\mathbf{u}^k$ in the ball $B(x^k_t,\tau^k_t)$ where $x^k_t\neq 0$. Precisely, let $x^k_t\in\Sigma_k\setminus\{0\}$ and set
\begin{equation*}
\tau^k_t=\frac{1}{2}\mathrm{dist}(x^k_t,\Sigma_k\setminus\{x^k_t\}).
\end{equation*}
By Proposition \ref{CYL-Ch-2-Proposition-1}, $l^k_t\ll\tau^k_t$. Notice that the Harnack-type inequality \eqref{CYL-Ch-2-Eq-2} holds for $B(x^k_t,\tau^k_t)\setminus\{x^k_t\}$, i.e.,
\begin{equation*}
u^k_i(x)+2\log{|x-x^k_t|}\leq C,~\text{for}~x\in B(x^k_t,\tau^k_t),\quad i=1,2,3.
\end{equation*}
The local mass of the $i$-th component is given by
\begin{equation*}
\sigma^k_{i,t}(r)=\frac{1}{2\pi}\int_{B(x^k_t,r)}e^{u^k_i(x)}\mathrm{d}x,\quad i=1,2,3.
\end{equation*}
Since $x^k_t\neq 0$ and $0\notin B(x^k_t,\tau^k_t)$, the system \eqref{1.1} is reduced to
\begin{equation}\label{CYL-Ch-5-Eq-1}
\left\{
\begin{aligned}
\Delta u^k_i(x)+\sum\limits_{j=1}^{3}a_{ij}e^{u^k_j(x)}&=0 ,\quad i=1,2,3,\\
u^k_{1}+u^k_{2}+2u^k_{3}&=0,
\end{aligned}
\right.
\end{equation}
in $B(x^k_t,\tau^k_t)$, where the coefficient matrix $A=(a_{ij})_{3\times3}$ is defined as in \eqref{1.2}.

For convenience, we fix $t$ and simplify the notations by dropping the index $t$ throughout this subsection. Recall that all the components of $\mathbf{u}^k$ have fast decay on $\partial B(x^k,{l^k})$. In \eqref{CYL-Ch-5-Eq-1}, $\gamma_i=0$, we have $\mu_i=1$. In next proposition we shall assume $\bm{\mu}=(\mu_1,\mu_2,\mu_3)=(1,1,1)$. For a sequence $s_k\leq\tau^k$, we recall that $\hat{\bm{\sigma}}(\mathbf{s})=(\hat{\sigma}_1(\mathbf{s}),\hat{\sigma}_2(\mathbf{s}),\hat{\sigma}_3(\mathbf{s}))$ is defined as in \eqref{CYL-Ch-4-Eq-14}.

\begin{proposition}\label{CYL-Ch-5-Proposition-1}
Let $\mathbf{u}^k=(u^k_1,u^k_2,u^k_3)$ be a sequence of solutions of \eqref{CYL-Ch-5-Eq-1} and $\hat{\bm{\sigma}}(\boldsymbol{\tau})$ be defined as in \eqref{CYL-Ch-4-Eq-14}, then there holds
\begin{enumerate}[(1)]
  \item At least one component of $\mathbf{u}^k$ has fast decay on $\partial B(x^k,\tau^k)$.
  \item $\hat{\bm{\sigma}}(\bm{\tau})
      =(\hat{\sigma}_1(\bm{\tau}),\hat{\sigma}_2(\bm{\tau}),\hat{\sigma}_3(\bm{\tau}))\in\Gamma(1,1,1)$, where $\bm{\tau}$ stands for the sequence $\{\tau^k\}$.
\end{enumerate}
\end{proposition}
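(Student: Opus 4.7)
The strategy is to trace $\hat{\bm{\sigma}}(\mathbf{s})$ as the scale $s$ grows from the innermost bubble radius $\varepsilon^k$ out to $\tau^k$. Since $0\notin\overline{B(x^k,\tau^k)}$, system \eqref{CYL-Ch-5-Eq-1} is free of Dirac sources, so $\bm{\mu}=(1,1,1)$ throughout, and the Harnack-type inequality of Proposition \ref{CYL-Ch-2-Proposition-2} places us in the setting where Lemmas \ref{CYL-Ch-4-Lemma-1}--\ref{CYL-Ch-4-Lemma-3} are directly applicable. The inductive quantity is a fast-decay radius $r_k\in[l^k,\tau^k]$, and at every such radius I will confirm that $\hat{\bm{\sigma}}(\mathbf{r})\in\Gamma(1,1,1)$.

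\emph{Base step.} Proposition \ref{CYL-Ch-2-Proposition-1} supplies $l^k$ at which all three components have fast decay, and whose $\varepsilon^k$-rescaling converges to one of the three model limits (a), (b), (c): an $\mathbf{A}_2$-type Toda subsystem, two decoupled Liouville equations, or a single Liouville equation. The classification results collected in Section \ref{CYL-Section-7} pin down the total mass in each case, and a direct check places the resulting $\hat{\bm{\sigma}}(\mathbf{l})$ among the first-level nodes of the $\Gamma(1,1,1)$ tree of Proposition \ref{CYL-Ch-3-Proposition-3}, hence in $\Gamma(1,1,1)$.

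\emph{Induction step and termination.} Suppose at a fast-decay radius $r_k$ one has $\hat{\bm{\sigma}}(\mathbf{r})\in\Gamma(1,1,1)$ but $\hat{\bm{\sigma}}(\mathbf{r})\neq\hat{\bm{\sigma}}(\bm{\tau})$. Lemma \ref{CYL-Ch-4-Lemma-3} then furnishes $s_k$ with $s_k/r_k\to\infty$ on which some component has slow decay and $\hat{\bm{\sigma}}(\mathbf{s})=\hat{\bm{\sigma}}(\mathbf{r})$. Rescale $v^k_i(y):=u^k_i(x^k+s_k y)+2\log s_k$. The slow-decay components of $\mathbf{v}^k$ converge in $C^2_{\mathrm{loc}}(\mathbb{R}^2\setminus\{0\})$ to a solution of a Liouville/Toda sub-system carrying a prescribed Dirac source at the origin of strength $4\pi(\mu_i-\tfrac12\sum_j a_{ij}\hat{\sigma}_j(\mathbf{r}))$ in the $i$-th equation, the strength being dictated by the inner mass. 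Reapplying Proposition \ref{CYL-Ch-2-Proposition-1} to $\mathbf{v}^k$ produces a subsequent fast-decay radius $R_k s_k\leq \tau^k$, and a direct computation using the classification of the singular limits shows that the newly accumulated mass in the $i$-th slow-decay component equals exactly $4\mu_i-2\sum_j a_{ij}\hat{\sigma}_j(\mathbf{r})$, which is precisely the formula defining the generator $\mathfrak{R}_i$ of $G$; in the degenerate alternatives where two or three indices are simultaneously slow, the increment matches a prescribed short concatenation of generators. Hence $\hat{\bm{\sigma}}(\mathbf{R}\mathbf{s})\in\Gamma(1,1,1)$. Since $\int_{B(x^k,\tau^k)}e^{u_i^k}$ is bounded by hypothesis \eqref{1.10}(iii) and each iteration strictly enlarges the mass vector by a quantum bounded below, the recursion terminates in finitely many steps with $\hat{\bm{\sigma}}(\bm{\tau})\in\Gamma(1,1,1)$, establishing (2). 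If every component were slow-decaying on $\partial B(x^k,\tau^k)$, applying Lemma \ref{CYL-Ch-4-Lemma-1}(b) with $l_k$ the preceding fast-decay radius would contradict the Pohozaev identity of Proposition \ref{CYL-Ch-2-Proposition-5}, which proves (1).

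\emph{Main obstacle.} The technically heaviest step is the enumeration at each slow-decay scale: one has to list the admissible subsets $J\subseteq\{1,2,3\}$ of slow-decaying components, identify the corresponding singular limit (Liouville, $\mathbf{A}_1\times\mathbf{A}_1$, or one of the $\mathbf{A}_2$-type sub-systems sitting inside $\mathbf{B}_2^{(1)}$), and verify that in every case the mass increment realizes exactly one generator or a fixed short concatenation of generators of the affine Weyl group. Secondary bubbling inside the slow-decaying block forces a nested application of the selection procedure, and the possibility of more than one Weyl generator being produced in a single rescaling needs a careful case distinction tied to the specific coefficients of the Cartan matrix \eqref{1.2}.
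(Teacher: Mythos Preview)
Your approach is essentially the paper's: initialize at the innermost scale $l^k$ via the entire-plane limits of Proposition \ref{CYL-Ch-2-Proposition-1}(3), then at each subsequent slow-decay scale supplied by Lemma \ref{CYL-Ch-4-Lemma-3} identify the mass increment with a Weyl generator (or short word) and iterate until the finite-energy bound forces termination. A few small corrections to your case enumeration: the two-component subsystem arising when $J=\{i,3\}$ is of $\mathbf{B}_2/\mathbf{C}_2$ type, not $\mathbf{A}_2$, and its classification (Theorem \ref{CYL-Ch7-Theorem-Classification}(a)) yields $(\sigma_i,\sigma_3)=(8\mu_i^*+8\mu_3^*,\,4\mu_i^*+8\mu_3^*)$, which corresponds to $\mathfrak{R}_{i3}=(\mathfrak{R}_i\mathfrak{R}_3)^2$ rather than a single reflection; the Dirac strength at the origin in the rescaled limit is $4\pi\gamma_i^*=4\pi\bigl(-\tfrac12\sum_j a_{ij}\hat\sigma_j(\mathbf{r})\bigr)$, not $4\pi\mu_i^*$ (your mass-increment formula is nonetheless correct); by Lemma \ref{CYL-Ch-4-Lemma-1}(b) the case $J=\{1,2,3\}$ never arises; and since $B(x^k,\tau^k)$ contains the single blow-up point $x^k$, the limiting system in the induction step carries exactly one Dirac source, so no ``secondary bubbling'' or nested selection is needed here---that complication enters only later in Proposition \ref{CYL-Ch-5-Proposition-3} when several bubble disks are grouped.
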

\begin{proof}
(1) has been proved in Lemma \ref{CYL-Ch-4-Lemma-1}. To prove (2), we divide the argument into several steps.
\medskip

\noindent\textbf{Step 1.} We prove that $(\hat{\sigma}_1(\boldsymbol{l}),\hat{\sigma}_2(\bm{l}),\hat{\sigma}_3(\bm{l}))\in\Gamma(1,1,1)$, where $\bm{l}$ stands for the sequence $\{l^k\}$. Set
\begin{equation*}
\varepsilon^k=e^{-\frac{1}{2}\max\limits_{i=1,2,3}u^k_i(x^k)},
\end{equation*}
and let
\begin{equation*}
v^k_i(y)=u^k_i(x^k+\varepsilon^k y)+2\log{\varepsilon^k},\quad i=1,2,3.
\end{equation*}
{Suppose that there exists $J\subsetneqq\{1,2,3\}$ such that $u^k_i$ has slow decay on $\partial B(x^k,\tau^k)$ for $i\in J$, while the left ones have fast decay on $\partial B(x^k,\tau^k)$, i.e., $v^k_i(y)\to-\infty$ in $L_{\mathrm{loc}}^{\infty}(\mathbb{R}^2)$ for $i\in\{1,2,3\}\setminus J$ and $v^k_i(y)$ converges to $v_i(y)$ in $C_{\mathrm{loc}}^2(\mathbb{R}^2)$ $\text{for}~i\in J$, where $v_i(y)$ satisfies}
\begin{equation*}
\Delta v_i(y)+\sum\limits_{j\in J}a_{ij}e^{v_j(y)}=0\quad\text{in}\quad\mathbb{R}^2,\quad i\in J.
\end{equation*}
Furthermore, there exists a sequence $R_k$ with $R_k\to +\infty$ as $k\to+\infty$ such that $l^k:=R_k\varepsilon^k\leq\tau^k$ and satisfies
\begin{enumerate}[(1).]
\item $\int_{B(0,{R_k})}e^{v_{i}}\mathrm{d}y=\int_{\mathbb{R}^2}e^{v_{i}}\mathrm{d}y+o(1)$, $i\in J$,
\item $v^k_i(y)+2\log|y|\leq-N_k$ for $|y|=R_{k}$, for some $N_k\to+\infty$, $i=1,2,3$,
\item $\mathbf{u}^k$ has fast decay on $\partial B(x^k,l^k)$.
\end{enumerate}
Then one of the following alternatives holds:
\begin{enumerate}[(a).]
\item If $J=\emptyset$, then $(\hat{\sigma}_1(\bm{l}),\hat{\sigma}_2(\bm{l}),\hat{\sigma}_3(\bm{l}))=(0,0,0)$.

\item If $J=\{i\}$ for some $i\in\{1,2,3\}$, then $(\hat{\sigma}_1(\bm{l}),\hat{\sigma}_2(\bm{l}),\hat{\sigma}_3(\bm{l}))=(4,0,0)$ or $(0,4,0)$ or $(0,0,4)$.

\item If $J=\{1,2\}$, then $(\hat{\sigma}_1(\bm{l}),\hat{\sigma}_2(\bm{l}),\hat{\sigma}_3(\bm{l}))=(4,4,0)$.

\item If $J=\{1,3\}$ or $\{2,3\}$, then by the classification result Theorem \ref{CYL-Ch7-Theorem-Classification}-(a), we get $(\hat{\sigma}_1(\bm{l}),\hat{\sigma}_2(\bm{l}),\hat{\sigma}_3(\bm{l}))=(16,0,12)$ or $(0,16,12)$.
\end{enumerate}

Therefore, in any case, we always have
\begin{equation*}
(\hat{\sigma}_1(\bm{l}),\hat{\sigma}_2(\bm{l}),\hat{\sigma}_3(\bm{l}))\in\Gamma(1,1,1).
\end{equation*}
\medskip

\noindent\textbf{Step 2.} If $\hat{\sigma}_i(\bm{l})=\hat{\sigma}_i(\bm{\tau})$ for $i=1,2,3$, then Proposition \ref{CYL-Ch-5-Proposition-1} is proved. Otherwise, there exists $j\in\{1,2,3\}$ such that $\hat{\sigma}_j(\bm{l})\neq\hat{\sigma}_j(\bm{\tau})$, then we can apply Lemma \ref{CYL-Ch-4-Lemma-3} to find a sequence $s_k$ such that $l^k\ll s_k\ll\tau^k$, some components of $\mathbf{u}^k$ have slow decay on $\partial B(x^k,{s_k})$ and
\begin{equation*}
\hat{\sigma}_i(\bm{l})=\hat{\sigma}_i(\mathbf{s})\quad\text{for}\quad i=1,2,3,
\end{equation*}
where $\mathbf{s}$ stands for the sequence $\{s_k\}$. Let
\begin{equation*}
v^k_i(y)=u^k_i(x^k+s_ky)+2\log{s_k}.
\end{equation*}
Suppose that there exists $J\subsetneqq\{1,2,3\}$ such that $u^k_i$ has slow decay on $\partial B(x^k,s_k)$ for $i\in J$, while the left ones have fast decay on $\partial B(x^k,s_k)$, i.e., $v^k_i(y)\to-\infty$ in $L_{\mathrm{loc}}^{\infty}(\mathbb{R}^2)$ for $i\in\{1,2,3\}\setminus J$ and $v^k_i(y)$ converges to $v_i(y)$ in $C_{\mathrm{loc}}^2(\mathbb{R}^2)$ $\text{for}~i\in J$, where $v_i(y)$ satisfies
\begin{equation*}
\Delta v_i(y)+\sum\limits_{j\in J}a_{ij}e^{v_j(y)}=4\pi\gamma_i^{*}\delta_0\quad\text{in}\quad\mathbb{R}^2,\quad i\in J,
\end{equation*}
and $\gamma_i^{*}=-\frac{1}{2}\sum_{j=1}^{3}a_{ij}\hat{\sigma}_{j}(\bm{l})>-1$. Furthermore, there exists a sequence $N_k^{*}$ with $N_{k}^{*}\to +\infty$ as $k\to+\infty$ such that $N_k^{*}s_k\leq\tau^k$ and satisfies
\begin{enumerate}[(1).]
\item $\int_{B(0,{N_k^{*}})}e^{v_{i}}\mathrm{d}y=\int_{\mathbb{R}^2}e^{v_{i}}\mathrm{d}y+o(1)$, $i\in J$,
\item $v^k_i(y)+2\log|y|\leq-N_k$ for $|y|=N^{*}_{k}$, for some $N_k\to+\infty$, $i=1,2,3$,
\item $\mathbf{u}^k$ has fast decay on $\partial B(x^k,N^{*}_{k} s_k)$.
\end{enumerate}
Then one of the following alternatives holds:
\medskip

\noindent\textbf{Case~(a).} If $J=\{i_0\}$ for some $i_0\in\{1,2,3\}$. Notice that $\hat{\sigma}_{i}(\mathbf{N^{*}s})=\hat{\sigma}_i(\bm{l})$ for $i\in\{1,2,3\}\setminus J$, where $\mathbf{N^{*}s}$ stands for the sequence $\{N_k^* s_k\}$, and both $\hat{\bm{{\sigma}}}(\mathbf{N^{*}s})$ and $\hat{\bm{\sigma}}(\bm{l})$ satisfy the Pohozaev identity \eqref{CYL-Ch-2-Eq-Pohozaev-identity}. Then $\hat{\sigma}_{{i_0}}(\mathbf{N^{*}s})$ and $\hat{\sigma}_{i_0}(\bm{l})$ are two roots of a quadratic polynomial in $\sigma_{i_0}$. From which we can directly solve $\hat{\bm{\sigma}}(\mathbf{N^{*}s})=\mathfrak{R}_{{i_0}}(\hat{\bm{{\sigma}}}(\bm{l}))$. We can also deduce this result in another way by applying classification result of singular Liouville equation \cite{Prajapat-Tarantello-2001}:
\begin{equation*}
\begin{aligned}
\hat{\sigma}_{{i_0}}(\mathbf{N^{*}s})&=\frac{1}{2\pi}\int_{\mathbb{R}^2}e^{v_{{i_0}}}\mathrm{d}x
+\hat{\sigma}_{i_0}(\bm{l})
=4\left(1-\frac{1}{2}\sum\limits_{j=1}^3 a_{{i_0}j}\hat{\sigma}_{j}(\bm{l})\right)+\hat{\sigma}_{i_0}(\bm{l})
=\left(\mathfrak{R}_{{i_0}}(\hat{\bm{\sigma}}(\bm{l}))\right)_{{i_0}}.
\end{aligned}
\end{equation*}
Then $\hat{\bm{\sigma}}(\mathbf{N^{*}s})=\mathfrak{R}_{{i_0}}(\hat{\bm{{\sigma}}}(\bm{l}))\in\Gamma(1,1,1)$ if $\hat{\bm{\sigma}}(\bm{l})\in\Gamma(1,1,1)$.
\smallskip

\noindent\textbf{Case~(b).} If $J=\{1,2\}$. From the view of the classification result of \cite{Prajapat-Tarantello-2001}, we directly deduce that
\begin{equation*}
\begin{aligned}
\hat{\sigma}_{m}(\mathbf{N^{*}s})&=\frac{1}{2\pi}\int_{\mathbb{R}^2}e^{v_{m}}\mathrm{d}x+\hat{\sigma}_m(\bm{l})
=4\left(1-\frac{1}{2}\sum\limits_{j=1}^3 a_{mj}\hat{\sigma}_{j}(\bm{l})\right)+\hat{\sigma}_m(\bm{l})
=\left(\mathfrak{R}_{12}(\hat{\bm{\sigma}}(\bm{l}))\right)_{m},~\text{for}~m=1,2.
\end{aligned}
\end{equation*}
We also notice that $\hat{\sigma}_{3}(\mathbf{N^{*}s})=\hat{\sigma}_3(\bm{l})$, hence $\hat{\bm{\sigma}}(\mathbf{N^{*}s})=\mathfrak{R}_{12}(\hat{\bm{\sigma}}(\bm{l}))\in\Gamma(1,1,1)$ provided $\hat{\bm{\sigma}}(\bm{l})\in\Gamma(1,1,1)$.
\smallskip

\noindent\textbf{Case~(c).} If $J=\{m,3\}$ for $m=1$ or $2$. From the classification result Theorem \ref{CYL-Ch7-Theorem-Classification}-(a), we directly deduce that
\begin{equation*}
\begin{aligned}
\hat{\sigma}_{m}(\mathbf{N^{*}s})&=\frac{1}{2\pi}\int_{\mathbb{R}^2}e^{v_{m}}\mathrm{d}x+\hat{\sigma}_m(\bm{l})
=8\left(1-\frac{1}{2}\sum\limits_{j=1}^3 a_{mj}\hat{\sigma}_{j}(\bm{l})\right)+8\left(1-\frac{1}{2}\sum\limits_{j=1}^3 a_{3j}\hat{\sigma}_{j}(\bm{l})\right)+\hat{\sigma}_m(\bm{l})\\
&=\left(\mathfrak{R}_{m3}\left(\hat{\bm{\sigma}}(\bm{l})\right)\right)_{m},~\text{for}~m=1~\text{or}~2,
\end{aligned}
\end{equation*}
and
\begin{equation*}
\begin{aligned}
\hat{\sigma}_{3}(\mathbf{N^{*}s})&=\frac{1}{2\pi}\int_{\mathbb{R}^2}e^{v_{3}}\mathrm{d}x+\hat{\sigma}_3(\bm{l})
=4\left(1-\frac{1}{2}\sum\limits_{j=1}^3 a_{mj}\hat{\sigma}_{j}(\bm{l})\right)+8\left(1-\frac{1}{2}\sum\limits_{j=1}^3 a_{3j}\hat{\sigma}_{j}(\bm{l})\right)+\hat{\sigma}_3(\bm{l})\\
&=\left(\mathfrak{R}_{m3}\left(\hat{\bm{\sigma}}(\bm{l})\right)\right)_{3},~\text{for}~m=1~\text{or}~2.
\end{aligned}
\end{equation*}
We also notice that $\hat{\sigma}_{j}(\mathbf{N^{*}s})=\hat{\sigma}_j(\bm{l})$ for $j\in\{1,2,3\}\setminus J$, then $\hat{\bm{\sigma}}(\mathbf{N^{*}s})=\mathfrak{R}_{m3}(\hat{\bm{\sigma}}(\bm{l}))\in\Gamma(1,1,1)$ since $\hat{\bm{\sigma}}(\bm{l})\in\Gamma(1,1,1)$.
\medskip

\noindent\textbf{Step 3.} Let $s_{k,1}=N_{k}^{*}s_k$. If
\begin{equation*}
\hat{\sigma}_i(\mathbf{s}_1)=\hat{\sigma}_i(\bm{\tau})\quad\text{for}\quad i=1,2,3,
\end{equation*}
then Proposition \ref{CYL-Ch-5-Proposition-1} is proved. Otherwise, we could repeat the argument of \textbf{Step 2} to find $s_{k,1}\ll s_{k,j}\ll s_{k,j+1}$ such that
$\hat{\bm{\sigma}}(\mathbf{s}_{j+1})\in\Gamma(1,1,1)$, where $\mathbf{s}_{j+1}$ stands for the sequence $\{s_{k,j+1}\}$. Since the energy is finite by (iii) of \eqref{1.10} and the total gain for the local masses at each step has a lower bound
\begin{equation*}
\sum\limits_{i=1}^{3}\left(\hat{\sigma}_i(\mathbf{s}_{j+1})-\hat{\sigma}_i(\mathbf{s}_j)\right)\geq4>0,
\end{equation*}
then after finitely many steps we have $\hat{\sigma}_i(\mathbf{s}_j)=\hat{\sigma}_i(\bm{\tau})~\text{for}~i=1,2,3$.
\end{proof}

From the proof of Proposition \ref{CYL-Ch-5-Proposition-1}, we can further deduce a more general result which will be used to prove Theorem \ref{CYL-Theorem-1} in Section \ref{CYL-Section-6}. Suppose that there is a subset $J\subsetneqq\{1,2,3\}$ such that each $u^k_i~(i\in J)$ has slow decay on $\partial B(x^k,s_k)$. Let
\begin{equation*}
v^k_i(y)=u^k_i(x^k+s_ky)+2\log{s_k},\quad i=1,2,3.
\end{equation*}
Then $v^k_i(y)\to-\infty$ for $i\in\{1,2,3\}\setminus J$ in $L^{\infty}_{\mathrm{loc}}(\mathbb{R}^2)$ and $v^k_i(y)$ converges to $v_i(y)$ in $C_{\mathrm{loc}}^2(\mathbb{R}^2)$ for $i\in J$, where $v_i(y)$ satisfies
\begin{equation}\label{CYL-Ch-5-Eq-2}
\Delta v_i(y)+\sum\limits_{j\in J}a_{ij}e^{v_j(y)}=4\pi\gamma_i^{*}\delta_0+4\pi\sum\limits_{l=1}^{N}m_{il}\delta_{q_l}\quad\text{in}\quad\mathbb{R}^2,\quad i\in J,
\end{equation}
where $0\neq q_{l}\in\mathbb{R}^2$, $m_{il}\in\mathbb{N}$ and $\gamma_i^{*}=\gamma_i-\frac{1}{2}\sum_{j=1}^{3}a_{ij}\sigma_j>-1$. Set
\begin{equation*}
\sigma^{*}_{j}=
\left\{
\begin{array}{ll}
\sigma_j,~&\text{if}~j\in \{1,2,3\}\setminus J,\\
\sigma_j+\frac{1}{2\pi}\int_{\mathbb{R}^2} e^{v_j}dx,~&\text{if}~j\in J.
\end{array}\right.
\end{equation*}
Then we have the following result

\begin{proposition}\label{CYL-Ch-5-Proposition-2}
If there is $\hat{\bm{\sigma}}\in\Gamma(\bm{\mu})$ such that $\sigma_{i}=\hat{\sigma}_i+4n_i,~n_i\in\mathbb{Z}$. Then $\sigma^*_i=\hat{\sigma}^*_i+4n^*_i$ with $\hat{\bm{\sigma}}^{*}\in\Gamma(\bm{\mu})$ and $n^*_i\in\mathbb{Z}$.
\end{proposition}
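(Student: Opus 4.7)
The strategy is to perform a further bubbling decomposition on the limit system \eqref{CYL-Ch-5-Eq-2} and split the added mass $\frac{1}{2\pi}\int_{\mathbb{R}^2}e^{v_i}\,\mathrm{d}x$ into a contribution $\tau_i^{(0)}$ coming from a small neighborhood of the origin and contributions $\tau_i^{(l)}$ coming from small neighborhoods of each auxiliary point $q_l$. Applying the selection process of Proposition \ref{CYL-Ch-2-Proposition-1} to the sequence $v_i^k$ restricted to $i\in J$ organizes the bubble regions of the limit, and produces the decomposition
\begin{equation*}
\sigma_i^{*} \;=\; \sigma_i + \tau_i^{(0)} + \sum_{l=1}^{N}\tau_i^{(l)},\qquad i\in J,
\end{equation*}
while $\sigma_i^{*}=\sigma_i$ for $i\notin J$. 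The goal is to rewrite the right-hand side as $\hat\sigma_i^{*}+4n_i^{*}$ with $\hat{\bm\sigma}^{*}\in\Gamma(\bm\mu)$.

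At each auxiliary point $q_l\ne 0$ the singular strengths $m_{il}\in\mathbb{N}$ are nonnegative integers, so the local analysis is governed by a nonaffine Toda-type subproblem with parameters $\bm\mu_l=(1+m_{1l},1+m_{2l},1+m_{3l})$. Proposition \ref{CYL-Ch-5-Proposition-1} then shows $(\tau_1^{(l)},\tau_2^{(l)},\tau_3^{(l)})\in\Gamma(\bm\mu_l)$, and Proposition \ref{CYL-Ch-3-Proposition-3} expresses each $\tau_i^{(l)}$ as a nonnegative integer combination of $4(1+m_{jl})$. In particular every $\tau_i^{(l)}$ is a multiple of $4$, and so is $\sum_l\tau_i^{(l)}$. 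For the contribution near the origin, the same selection and Weyl-orbit analysis as in Proposition \ref{CYL-Ch-5-Proposition-1} (now organized around the affine singularity, using the lemmas of Section \ref{CYL-Section-4}) shows that $\bm\tau^{(0)}=(\tau_1^{(0)},\tau_2^{(0)},\tau_3^{(0)})$ is obtained from $\bm 0$ by a finite chain $\mathfrak{R}_{i_n}\cdots\mathfrak{R}_{i_1}$ of generators acting with respect to the renormalized parameters $\bm\mu^{*} = \bm\mu-\tfrac12 A\bm\sigma$; equivalently, $\bm\tau^{(0)}\in\Gamma(\bm\mu^{*})$.

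The main obstacle is the arithmetic bookkeeping needed to reassemble these pieces into a single element of $\Gamma(\bm\mu)$ modulo $4\mathbb{Z}^3$. Writing $\bm\mu^{*} = \bm\mu-\tfrac12 A\hat{\bm\sigma}-2A\bm n$, one tracks how the chain $\mathfrak{R}_{i_n}\cdots\mathfrak{R}_{i_1}$, when applied to $\hat{\bm\sigma}$ with the original parameters $\bm\mu$ in place of $\bm 0$ with $\bm\mu^{*}$, produces an element of $\Gamma(\bm\mu)$ that differs from $\bm\sigma+\bm\tau^{(0)}$ by an integer vector whose entries are multiples of $4$. This uses the explicit formula $(\mathfrak{R}_i\bm\sigma)_i = 4\mu_i-2\sum_j a_{ij}\sigma_j+\sigma_i$ together with the fact that $2A$ has integer entries, so that shifting parameters by multiples of $4$ changes the output of any chain by integer multiples of $4$. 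Setting $\hat{\bm\sigma}^{*}$ equal to this element of $\Gamma(\bm\mu)$ and absorbing all remaining multiples of $4$ (including $\sum_l\tau_i^{(l)}$ and $4n_i$) into $4n_i^{*}$ yields the conclusion.
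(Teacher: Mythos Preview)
Your approach has a genuine gap. The limit functions $v_i$ (for $i\in J$) in \eqref{CYL-Ch-5-Eq-2} form a \emph{fixed entire solution}, and the scaled sequence $v_i^k$ converges to it in $C^2_{\mathrm{loc}}$; there is no further blow-up happening. Consequently, Proposition~\ref{CYL-Ch-5-Proposition-1} and the selection procedure of Proposition~\ref{CYL-Ch-2-Proposition-1} are not applicable here: those results concern sequences of solutions that blow up, whereas here you are trying to extract ``local contributions'' $\tau_i^{(0)},\tau_i^{(l)}$ from a single solution. There is no meaningful way to split $\frac{1}{2\pi}\int_{\mathbb{R}^2}e^{v_i}\,\mathrm{d}x$ into pieces attached to the singular points of a fixed entire solution, and the claimed membership $\bm\tau^{(0)}\in\Gamma(\bm\mu^{*})$ has no derivation.

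What the paper actually does is case-split on $J$ and invoke the \emph{classification theorems} for entire solutions: for $J=\{l\}$ or $J=\{1,2\}$ the Liouville classification of Prajapat--Tarantello gives $\frac{1}{2\pi}\int_{\mathbb{R}^2}e^{v_l}=4\mu_l^{*}+4N_l$ with $N_l\in\mathbb{Z}$, the integer absorbing all the auxiliary sources $m_{il}\in\mathbb{N}$; for $J=\{i,3\}$ one needs the $\mathbf{B}_2/\mathbf{C}_2$ classification (Theorem~\ref{CYL-Ch7-Theorem-Classification}(c)), which lists eight possible values of $(\sigma_u^{*},\sigma_v^{*})$ modulo $4\mathbb{Z}^2$. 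Each of these is then identified with an explicit word in the generators $\mathfrak{R}_1,\mathfrak{R}_2,\mathfrak{R}_3$ applied to $\hat{\bm\sigma}$, showing $\hat{\bm\sigma}^{*}\in\Gamma(\bm\mu)$. Your arithmetic paragraph about shifting parameters by multiples of $4$ is in the right spirit, but it cannot replace the classification input: without knowing the actual values of $\frac{1}{2\pi}\int_{\mathbb{R}^2}e^{v_i}$ (modulo $4$), the Weyl-group bookkeeping has nothing to act on.
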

\begin{proof} We shall consider the following three cases.
\medskip

\noindent \textbf{Case (a).}  If {$J=\{l\}$} for some $l\in\{1,2,3\}$. Then by the classification result of \cite{Prajapat-Tarantello-2001}, we have
\begin{equation}\label{CYL-Ch-5-Eq-3}
\begin{aligned}
\frac{1}{2\pi}\int_{\mathbb{R}^2}e^{v_{l}}\mathrm{d}x
&=4(1+\gamma^*_l)+4N_l=4\mu^{*}_{l}+4N_l,
\end{aligned}
\end{equation}
where
\begin{equation*}
\mu^{*}_{l}=1+\gamma^*_l=1+\gamma_l-\frac{1}{2}\sum\limits_{j=1}^3 a_{lj}{\sigma}_{j}=\mu_l-\frac{1}{2}\sum\limits_{j=1}^3 a_{lj}{\sigma}_{j},
\end{equation*}
and $N_l\in\mathbb{Z}$. Since $\sigma_{i}=\hat{\sigma}_i+4n_i$ for some $n_i\in\mathbb{Z}$, we get
\begin{equation*}
\mu^{*}_{i}=\mu_i-\frac{1}{2}\sum\limits_{j=1}^3 a_{ij}{\sigma}_{j}=\mu_i-\frac{1}{2}\sum\limits_{j=1}^3 a_{ij}\hat{\sigma}_{j}+\overline{n}_i,\quad\overline{n}_i\in\mathbb{Z},\quad i=1,2,3.
\end{equation*}
Then \eqref{CYL-Ch-5-Eq-3} can be rewritten as
\begin{equation*}
\begin{aligned}
\frac{1}{2\pi}\int_{\mathbb{R}^2}e^{v_{l}}\mathrm{d}x=4\mu^{*}_{l}+4N_l=4\mu_l-\sum_{j=1}^3a_{lj}\hat\sigma_j
+4\widetilde N_l,\quad\widetilde N_l\in\mathbb{Z}.
\end{aligned}
\end{equation*}
Thus we have
\begin{equation*}
\begin{aligned}
\sigma^{*}_l=\sigma_l+\frac{1}{2\pi}\int_{\mathbb{R}^2}e^{v_{l}}\mathrm{d}x
=\hat{\sigma}_l+4n_l+4\mu_l-2\sum_{j=1}^3a_{lj}\hat\sigma_j+4\widetilde{N}_l
=:\hat{\sigma}_l^*+4n^{*}_l,\quad n_l^*\in\mathbb{Z}.
\end{aligned}
\end{equation*}
While for $j\neq l$, we set $\hat{\sigma}_j=\hat{\sigma}_j^*$, then
\begin{equation*}
\begin{aligned}
\sigma^{*}_j=\sigma_{j}=\hat{\sigma}_j+4n_{j}=:\hat{\sigma}_j^*+4n_{j}.
\end{aligned}
\end{equation*}
Therefore, we get
\begin{equation*}
\bm{\sigma}^{*}=\hat{\bm{\sigma}}^{*}+4\mathbb{Z}\quad\text{with}\quad \hat{\bm{\sigma}}^{*}=\mathfrak{R}_{l}(\hat{\bm{\sigma}})\in\Gamma(\bm{\mu}).
\end{equation*}
\medskip

\noindent \textbf{Case (b).} If {$J=\{1,2\}$}. Then by the classification result of \cite{Prajapat-Tarantello-2001}, we have
\begin{equation}\label{CYL-Ch-5-Eq-4}
\begin{aligned}
\frac{1}{2\pi}\int_{\mathbb{R}^2}e^{v_{i}}\mathrm{d}x
&=4(1+\gamma^*_i)+4N_i=4\mu^{*}_{i}+4N_i,\quad N_i\in\mathbb{Z},\quad i=1,2,
\end{aligned}
\end{equation}
where
\begin{equation*}
\mu^{*}_{i}=1+\gamma^*_i=1+\gamma_i-\frac{1}{2}\sum\limits_{j=1}^3 a_{ij}{\sigma}_{j}=\mu_i-\frac{1}{2}\sum\limits_{j=1}^3 a_{ij}{\sigma}_{j},\quad i=1,2.
\end{equation*}
Since $\sigma_{i}=\hat{\sigma}_i+4n_i$ for some $n_i\in\mathbb{Z}$, $i=1,2,3$, we get that
\begin{equation*}
\mu^{*}_{i}=\mu_i-\frac{1}{2}\sum\limits_{j=1}^3 a_{ij}{\sigma}_{j}=\mu_i-\frac{1}{2}\sum\limits_{j=1}^3 a_{ij}\hat{\sigma}_{j}+\overline{n}_i,\quad\overline{n}_i\in\mathbb{Z},\quad i=1,2,3.
\end{equation*}
Then \eqref{CYL-Ch-5-Eq-4} can be rewritten as
\begin{equation*}
\begin{aligned}
\frac{1}{2\pi}\int_{\mathbb{R}^2}e^{v_{i}}\mathrm{d}x=4\mu^{*}_{i}+4N_i
=4\left(\mu_i-\frac12\sum_{j=1}^3a_{ij}\hat\sigma_j\right)+4\hat N_i,\quad\hat N_i\in\mathbb{Z},\quad
i=1,2.
\end{aligned}
\end{equation*}
Hence, for $i=1,2$ we get that
\begin{equation*}
\begin{aligned}
\sigma^{*}_i=\sigma_i+\frac{1}{2\pi}\int_{\mathbb{R}^2}e^{v_{i}}\mathrm{d}x
=\hat{\sigma}_i+4\mu_i-2\sum_{j=1}^3a_{ij}\hat\sigma_j+
+4n_i+4\widetilde{N}_i=:\hat{\sigma}_i^*+4n^{*}_i,
\end{aligned}
\end{equation*}
with $\hat{\sigma}_i^*=4\mu_i-2\sum_{j=1}^3a_{ij}\hat\sigma_j+\hat{\sigma}_i
=\left(\mathfrak{R}_{12}(\hat{\bm{\sigma}})\right)_i$. While for the third component, we set $\hat{\sigma}_3^*=\hat{\sigma}_3$. Then we conclude that
\begin{equation*}
\bm{\sigma}^{*}=\hat{\bm{\sigma}}^{*}+4\mathbb{Z}\quad\text{with}\quad\hat{\bm{\sigma}}^{*}
=\mathfrak{R}_{12}(\hat{\bm{\sigma}})\in\Gamma(\bm{\mu}).
\end{equation*}
\medskip

\noindent \textbf{Case (c).} If {$J=\{i,3\}$} for $i=1$ or $2$. Then by the classification result Theorem \ref{CYL-Ch7-Theorem-Classification}-(c), we have
\begin{equation*}
\begin{aligned}
\frac{1}{2\pi}\int_{\mathbb{R}^2}e^{v_{i}}\mathrm{d}x=\bar\sigma_i+4N_i,\quad i=1~\text{or}~2,
\end{aligned}
\end{equation*}
and
\begin{equation*}
\begin{aligned}
\frac{1}{2\pi}\int_{\mathbb{R}^2}e^{v_{3}}\mathrm{d}x
&=\bar\sigma_3+4N_3,
\end{aligned}
\end{equation*}
where
\begin{equation}
\label{CYL-Ch-5-Eq-10}
\begin{aligned}
(\bar\sigma_i,\bar\sigma_3)\in\Big\{
&(0,0),~(4\mu_i^*,0),~(0,4\mu_3^*),~(4\mu_i^*,4(\mu_i^*+\mu_3^*)),~(4\mu_i^*+8\mu_3^*,4\mu_3^*),\\
&(4\mu_i^*+8\mu_3^*,4\mu_i^*+8\mu_3^*),~(8\mu_i^*+8\mu_3^*,4\mu_i^*+4\mu_3^*),
~(8\mu_i^*+8\mu_3^*,4\mu_i^*+8\mu_3^*)\Big\},
\end{aligned}
\end{equation}
and
\begin{equation*}
\mu_i^*=\gamma_i^*+1,~i=1~(\mbox{or}~2),3,\quad N_i\in\mathbb{Z}.
\end{equation*}
Then we get that
\begin{equation}
\label{CYL-Ch-5-Eq-9}
\sigma_i^*=\hat\sigma_i+\bar\sigma_i+4\bar N_i,~i=1~\mbox{or}~2,
\quad\mbox{and}\quad \sigma_3^*=\hat\sigma_3+\bar\sigma_3+4\bar N_3,
\end{equation}
where $\bar N_i\in\mathbb{Z},~i=1~(\text{or}~2), 3$. We set
\begin{equation*}
\hat\sigma_j^*=\hat\sigma_j+\bar\sigma_j,~j=i,3,\quad\mathrm{and}\quad \hat\sigma_j^*=\hat\sigma_j,~j\neq i,3.
\end{equation*}
By \eqref{CYL-Ch-5-Eq-10} and \eqref{CYL-Ch-5-Eq-9}, from direct computation we get that
\begin{equation*}
\hat{\bm{\sigma}}^{*}\in\Big\{
\hat{\bm{\sigma}},~\mathfrak{R}_i\hat{\bm{\sigma}},~\mathfrak{R}_3\hat{\bm{\sigma}},
~\mathfrak{R}_{i}\mathfrak{R}_{3}\hat{\bm{\sigma}},~\mathfrak{R}_{3}\mathfrak{R}_{i}\hat{\bm{\sigma}},
~\mathfrak{R}_{i}\mathfrak{R}_{3}\mathfrak{R}_{i}\hat{\bm{\sigma}},
~\mathfrak{R}_{3}\mathfrak{R}_{i}\mathfrak{R}_{3}\hat{\bm{\sigma}},~\mathfrak{R}_{i3}\hat{\bm{\sigma}}
\Big\}.
\end{equation*}
Since $\hat{\bm{\sigma}}\in\Gamma(\mu)$ we get that $\hat{\bm{\sigma}}^{*}\in\Gamma(\mu)$ as well by the definition of $\Gamma(\mu)$. Thus, we have
\begin{equation*}
{\bm{\sigma}}^*={\hat{\bm{\sigma}}}^*+4\mathbb{Z}\quad\mbox{with}\quad {\hat{\bm{\sigma}}}^*\in\Gamma(\mu).
\end{equation*}
This finishes the whole proof.
\end{proof}

\subsection{Local mass in a group that does not contain $0$}
In this subsection, we shall collect together some of the bubbling disks $B(x_i^k,l_i^k)$ into a larger one. We present the procedure as follows. For any $x_i^k\neq0\in\Sigma_k$, we define the subset $S^{k}_1$ of $\Sigma_k$ by the following way. Let $x_2^k\neq0\in\Sigma_k$ be the point of $\Sigma_k$ such that
\begin{equation*}
\mathrm{dist}(x_1^k,x_2^k)=\mathrm{dist}\left(x_1^k,\Sigma_k\setminus\{0,x_1^k\}\right),
\end{equation*}
where $\mathrm{dist}(\cdot,\cdot)$ is the distance function. We set
\begin{equation*}
S^{k}_1=\begin{cases}
\{x_1^k\},\quad &\mbox{if}\quad\frac{\mathrm{dist}(x_1^k,x_2^k)}{|x_1^k|}\quad\mbox{has a positive lower bound},\\
\left\{x_j^k~\mid~\mathrm{dist}(x_1^k,x_j^k)\leq C\mathrm{dist}(x_1^k,x_2^k)\right\},\quad &\mbox{if}\quad\frac{\mathrm{dist}(x_1^k,x_2^k)}{|x_1^k|}~\to0.
\end{cases}
\end{equation*}
Then we have
\begin{equation*}
\forall x,y\in S_1^k,\quad \mbox{the ratio}~\frac{\mathrm{dist}(x,y)}{|y|}\to0~\mbox{as}~k\to+\infty.
\end{equation*}
Obviously, one can see that $\Sigma_k$ can be decomposed into a disjoint union of $S_j^k:$
\begin{equation*}
\Sigma_k=\{0\}\cup S_1^k\cup\cdots \cup S_{m_0}^k,\quad m_0\leq m\quad\mbox{and}\quad m_0\in\mathbb{N}.
\end{equation*}
Next, we shall use $S_1^k$ as an example to illustrate the combination process for the points in $S_1^k$. We may assume
\begin{equation*}
S_1^k=\{x_{1,1}^k,\cdots,x_{1,m_1}^k\}\subseteq\Sigma_k.
\end{equation*}
Let {$\tau_{S^k_1}^k$} and $\tau_{l}^k$ be defined as follows:
\begin{equation*}
\tau_{S^k_1}^k=\frac12\mbox{dist}(x_{1,1}^k,\Sigma_k\setminus S_1^k)\quad \mbox{and}\quad\tau_{l}^k=\frac12\mbox{dist}(x_{1,l}^k,S_1^k\setminus\{x_{1,l}^k\})\quad\text{for}\quad l=1,\cdots,m_{1}.
\end{equation*}
Then it is not difficult to see that
\begin{equation*}
\bigcup_{l=1}^{m_1}B(x_{1,l}^k,\tau_{l}^k)\subseteq B(x_1^k,\tau_{S^k_1}^k)\quad \mbox{and}\quad \tau_{l}^k/\tau_{S^k_1}^k\to 0\quad\mbox{for}\quad l=1,\cdots,m_1.
\end{equation*}
By Proposition \ref{CYL-Ch-5-Proposition-1}, the local mass $\hat{\sigma}_{i}(B(\mathbf{x}_{1,l},\bm{\tau}_{l}))=4m_{l,i},~i=1,2,3,~l=1,\cdots,m_1$ satisfies
\begin{equation*}
(4m_{l,1},4m_{l,2},4m_{l,3})\in\Gamma(1,1,1)\quad \mbox{for}\quad l=1,\cdots,m_1,
\end{equation*}
where $(\mathbf{x}_{1,l},\bm{\tau}_{l})$ stands for the sequence of pairs $\{(x^k_{1,l},\tau^k_{l})\}$, $m_{l,i}\in\mathbb{N}\cup\{0\},~1\leq l\leq m_1$. In the following proposition we shall compute the possible values on the blow up mass in $B(x_{1,1}^k,\tau_{S^k_1}^k)$

\begin{proposition}\label{CYL-Ch-5-Proposition-3}
There holds:
\begin{enumerate}[(i)]
  \item At least one component of $\mathbf{u}^k$ has fast decay on $\partial B(x^k_{1,1},\tau^k_{1})$.
  \item Let $\sigma_i=\hat{\sigma}_i(B(\mathbf{x}_{1,1},\bm{\tau}_{S^k_1}))$, then $\sigma_i\in4\mathbb{N}\cup\{0\}$.
\end{enumerate}
\end{proposition}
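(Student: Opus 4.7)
The plan is to iterate the scaling argument of Proposition \ref{CYL-Ch-5-Proposition-1} over the whole cluster $S_1^k$, propagating the local mass from the inner bubbling scales $\tau_l^k$ up to the outer scale $\tau_{S_1^k}^k$, while using Proposition \ref{CYL-Ch-5-Proposition-2} to track how the mass transforms under each rescaling.

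For part (i), the Harnack-type estimate in Proposition \ref{CYL-Ch-2-Proposition-2} holds on the annulus $B(x_{1,1}^k,\tau_{S_1^k}^k)\setminus\bigcup_l B(x_{1,l}^k,\tau_l^k)$, and by construction no point of $\Sigma_k\setminus S_1^k$ lies inside $B(x_{1,1}^k,\tau_{S_1^k}^k)$. After verifying the hypotheses of Lemma \ref{CYL-Ch-4-Lemma-1}(b) at the outer scale (in particular the hypothesis on $\sigma^k_i$ limits, which follows from the finite set of inner bubbling disks and a diagonal argument), we conclude that at least one component of $\mathbf{u}^k$ must have fast decay on $\partial B(x_{1,1}^k,\tau_{S_1^k}^k)$.

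For part (ii), I would set up an induction on scales between $\max_l\tau_l^k$ and $\tau_{S_1^k}^k$. The base case is given by Proposition \ref{CYL-Ch-5-Proposition-1} applied to each inner disk $B(x_{1,l}^k,\tau_l^k)$: each $\hat{\bm\sigma}(B(\mathbf{x}_{1,l},\bm\tau_l))\in\Gamma(1,1,1)$, so in particular by Proposition \ref{CYL-Ch-3-Proposition-3} each component is a non-negative multiple of $4$, and the aggregate mass over $\bigcup_l B(x_{1,l}^k,\tau_l^k)$ can be written as $\hat{\bm\sigma}+4\mathbb{Z}^3$ with $\hat{\bm\sigma}\in\Gamma(1,1,1)$. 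Lemma \ref{CYL-Ch-4-Lemma-3} then produces the next scale $s_k$ on which some component has slow decay (with mass unchanged by Lemma \ref{CYL-Ch-4-Lemma-2} between consecutive fast-decay shells). Rescaling by $s_k$ and passing to the limit, the concentration set inside the rescaled ball consists of finitely many points $q_1,\ldots,q_N$ encoding the previous sub-clusters, and the slow-decaying components converge to a solution of a singular system of the form \eqref{CYL-Ch-5-Eq-2}. Proposition \ref{CYL-Ch-5-Proposition-2} applies with $\bm\mu=(1,1,1)$ and ensures that after this transition the accumulated mass remains in $\Gamma(1,1,1)+4\mathbb{Z}^3$. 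Since condition (iii) of \eqref{1.10} bounds the total mass and each jump contributes at least $4$ to some component, only finitely many iterations are needed to reach the outer scale $\tau_{S_1^k}^k$. The final mass $\sigma_i$ is then of the form $\hat\sigma_i+4n_i$ with $\hat{\bm\sigma}\in\Gamma(1,1,1)$ and $n_i\in\mathbb Z$; combined with $\hat\sigma_i\in 4\mathbb{N}\cup\{0\}$ and $\sigma_i\ge 0$, this forces $\sigma_i\in 4\mathbb{N}\cup\{0\}$.

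The main obstacle is bookkeeping through the iteration: the sub-clusters within $S_1^k$ may have a nested hierarchy of scales, and at each intermediate slow-decay scale $s_k$ the limiting singular system inherits singularity strengths $\gamma_i^\ast=\gamma_i-\frac12\sum_j a_{ij}\sigma_j$ determined by all previously accumulated mass. One must verify that these induced $\gamma_i^\ast$ are admissible (i.e.\ $>-1$, which comes from Lemma \ref{CYL-Ch-4-Lemma-1}(a)) and that the contributions from the limit system fall into the cases packaged into Proposition \ref{CYL-Ch-5-Proposition-2}, so that the inductive hypothesis $\bm\sigma\in\Gamma(1,1,1)+4\mathbb{Z}^3$ is preserved at each step. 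Once this compatibility is secured, the induction closes by essentially the same mechanism as the single-center case of Proposition \ref{CYL-Ch-5-Proposition-1}.
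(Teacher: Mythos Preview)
Your overall strategy for (ii) matches the paper's: aggregate the inner-disk masses (each in $\Gamma(1,1,1)$, hence in $4\mathbb N\cup\{0\}$) and then iterate the rescaling argument outward, using the classification results to control each jump. Two points deserve correction.

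First, you misread (i). The statement is about the \emph{inner} radius $\tau_1^k=\tfrac12\mathrm{dist}(x_{1,1}^k,S_1^k\setminus\{x_{1,1}^k\})$, not the outer radius $\tau_{S_1^k}^k$. At that inner scale the claim is immediate from Proposition~\ref{CYL-Ch-5-Proposition-1}(1); no new argument is needed.

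Second, in (ii) you invoke Lemma~\ref{CYL-Ch-4-Lemma-3} to ``produce the next scale $s_k$'', but that lemma requires a starting radius $r_k$ on which \emph{all} components have fast decay. At the cluster-diameter scale $l^k(S_1^k)=2\max_l\mathrm{dist}(x_{1,1}^k,x_{1,l}^k)$ this is not guaranteed: Proposition~\ref{CYL-Ch-5-Proposition-1} only gives fast decay of \emph{some} component on each $\partial B(x_{1,l}^k,\tau_l^k)$. The paper therefore splits into two cases at the scale $l^k(S_1^k)$. In Case~(1) all components have fast decay there, the annulus $l^k(S_1^k)\le|x-x_{1,1}^k|\le\tau_{S_1^k}^k$ contains no blow-up points, and one can run the Step~2--3 iteration of Proposition~\ref{CYL-Ch-5-Proposition-1} starting from $l^k(S_1^k)$. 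In Case~(2) some component has slow decay already at $l^k(S_1^k)$; the paper rescales at that scale directly, obtains a limiting system with integer singularities at all $m_1$ cluster points $q_l$, and appeals to Theorem~\ref{CYL-Ch7-Theorem-Classification}(b) to get $\tfrac1{2\pi}\int_{\mathbb R^2}e^{v_i}\in 4\mathbb N$. This produces a new all-fast-decay radius $R_kl^k(S_1^k)$ from which the iteration can proceed. Your proposed uniform use of Proposition~\ref{CYL-Ch-5-Proposition-2} would also handle Case~(2) (since there $\gamma_i^\ast$ is itself a nonnegative integer), but you must make the case split explicit rather than assume Lemma~\ref{CYL-Ch-4-Lemma-3} applies at the cluster scale.
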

\begin{proof}
By Proposition \ref{CYL-Ch-5-Proposition-1}, it is sufficient to consider the following two cases.
\medskip

\noindent \textbf{Case~(1).} $\mathbf{u}^k$ has fast decay on $\partial B(x^k_{1,1},\tau^{k}_{1})$. Let
\begin{equation*}
l^k(S^k_1)=2\max_{1\leq l\leq m_1}\mbox{dist}(x_{1,1}^k,x_{1,l}^k).
\end{equation*}
Then $\mathbf{u}^k$ has fast decay on $\partial B(x^k_{1,l},\tau_{l}^k)$ for any $l=1,\cdots,m_1.$ Therefore
      \begin{equation}\label{CYL-Ch-5-Eq-7}
      \begin{aligned}
      \sigma^k_i(B(x^k_{1,1},l^k(S^k_1)))
      &=\frac{1}{2\pi}\int_{B(x^k_{1,1},l^k(S^k_1))}e^{u^k_i(x)}\mathrm{d}x\\
      &=\frac{1}{2\pi}\int_{\bigcup_{l=1}^{m_1} B(x^k_{1,l},\tau^k_{l})}e^{u^k_i(x)}\mathrm{d}x
      +\frac{1}{2\pi}\int_{B(x^k_{1,1},l^k(S^k_1))\setminus \bigcup_{j=1}^{m_1} B(x^k_{1,j},\tau^k_{j})}e^{u^k_i(x)}\mathrm{d}x\\
      &=\frac{1}{2\pi}\int_{\bigcup_{l=1}^{m_1} B(x^k_{1,l},\tau^k_{l})}e^{u^k_i(x)}\mathrm{d}x+o(1)\\
      &=\sum\limits_{l=1}^{m_1}4m_{l,i}+o(1),
      \end{aligned}
      \end{equation}
      where $m_{l,i}\in\mathbb{N}\cup\{0\},~i=1,2,3,~1\leq l\leq m_1$. Here we have used the fast-decay of $\mathbf{u}^k$ outside of {$B(x^k_{1,l},\tau^k_{l})$}, and then the second integral of \eqref{CYL-Ch-5-Eq-7} is $o(1)$.

      If it holds that
      \begin{equation*}
        \hat{\sigma}_{i}(B(\mathbf{x}_{1,1},\mathbf{l}(S^k_1)))
        =\hat{\sigma}_{i}(B(\mathbf{x}_{1,1},\mathbf{\bm{\tau}}_{S^k_1})),\quad\forall i=1,2,3,
      \end{equation*}
      where $(\mathbf{x}_{1,1},\mathbf{l}(S^k_1))$ and $(\mathbf{x}_{1,1},\mathbf{\bm{\tau}}_{S^k_1})$ stand for the sequences of pairs $\{(x^k_{1,1},l^k(S^k_1))\}$ and $\{(x^k_{1,1},\tau^k_{S^k_1})\}$ respectively, then Proposition \ref{CYL-Ch-5-Proposition-3} is proved. Otherwise,
      \begin{equation*}
        \hat{\sigma}_{i}(B(\mathbf{x}_{1,1},\mathbf{\bm{\tau}}_{S^k_1}))>
        \hat{\sigma}_{i}(B(\mathbf{x}_{1,1},\mathbf{l}(S^k_1)))\quad\mbox{for some}~i\in\{1,2,3\}.
      \end{equation*}
      In this situation we can apply the same arguments of \textbf{Step 2} and \textbf{Step 3} in the proof of Proposition \ref{CYL-Ch-5-Proposition-1} to obtain that
      \begin{equation*}
        \hat{\sigma}_{i}(B(\mathbf{x}_{1,1},\mathbf{\bm{\tau}}_{S^k_1}))
        =\hat{\sigma}_{i}(B(\mathbf{x}_{1,1},\mathbf{s}_{\ell}))\in4\mathbb{N}\cup\{0\},~i=1,2,3,
      \end{equation*}
      for some $\mathbf{s}_{\ell}$, where $\mathbf{s}_{\ell}$ stands for the sequence $\{s^k_\ell\}$ with $l^k(S^k_1)\ll s^k_\ell\ll \tau^k_{S_1}$.
\medskip

\noindent \textbf{Case~(2).} Some components of $\mathbf{u}^k$ have slow decay on $\partial B(x^k_{1,1},\tau^{k}_{1})$. Since $l^k(S^k_1)\thicksim\tau^k_l$ for $1\leq l\leq m_1$, some components of $\mathbf{u}^k$ have slow decay on $\partial B(x^k_{1,1},l^k(S^k_1))$. Then one of the following alternatives (similar to that of \textbf{Step 2} in the proof of Proposition \ref{CYL-Ch-5-Proposition-1}) holds:
      \begin{enumerate}[(\text{2-}1)\text{.}]
        \item $u^k_1$ and $u^k_2$ have slow decay on $\partial B(x^k_{1,1},l^k(S^k_1))$.
        \item $u^k_{1}$ and $u^k_3$ (or $u^k_2$ and $u^k_3$) have slow decay on $\partial B(x^k_{1,1},l^k(S^k_1))$.
        \item One component of $\mathbf{u}^k$ has slow decay on $\partial B(x^k_{1,1},l^k(S^k_1))$.
      \end{enumerate}

      We set $J$ as the maximal set such that each $u^k_i$ ($i\in J$) has slow decay on $\partial B(x^k_{1,1},l^k(S^k_1))$. Let
      \begin{equation*}
        v^k_i(y)=u^k_i(x^k_{1,1}+l^k(S^k_1) y)+2\log{l^k(S^k_1)},\quad i=1,2,3,
      \end{equation*}
      then $v^k_i(y)\to -\infty$ in $L^{\infty}_{\mathrm{loc}}(\mathbb{R}^2)$ for $i\in \{1,2,3\}\setminus J$ and $v^k_i(y)$ converges to $v_i(y)$ in $C_{\mathrm{loc}}^2(\mathbb{R}^2)$ for $i\in J$, where $v_{i}(y)$ satisfies
      \begin{equation*}
      \Delta v_{i}(y)+\sum\limits_{j\in J}a_{ij}e^{v_j(y)}=4\pi\sum\limits_{l=1}^{m_1}n_{l,i}\delta_{q_l}\quad\text{in}\quad\mathbb{R}^2,\quad i\in J,
      \end{equation*}
      where $n_{l,i}=-2\sum_{j=1}^{3}a_{ij} m_{l,j} \in\mathbb{N}\cup\{0\}$. Applying the classification result Theorem \ref{CYL-Ch7-Theorem-Classification}-(b), there exists a sequence $R_k$ with $R_{k}\to +\infty$ as $k\to+\infty$ such that $R_k l^k(S^k_1)\leq \tau^k_{S^k_1}$ and
      \begin{enumerate}[(a)\text{.}]
      \item $\int_{B(0,{R_k})}e^{v_i}\mathrm{d}y=\int_{\mathbb{R}^2}e^{v_i}\mathrm{d}y+o(1)
          =4\widetilde{m}_{i}+o(1),~i\in J$, where $\widetilde{m}_{i}\in\mathbb{N}$,
      \item $v^k_i(y)+2\log|y|\leq-N_k$ for $|y|=R_{k}$, for some $N_k\to+\infty,~i=1,2,3$,
      \item $\mathbf{u}^k$ has fast decay on $\partial B(x^k_{1,1},R_k l^k{(S^k_1)})$.
      \end{enumerate}
      Hence the local mass
      \begin{equation*}
      \sigma^k_i(B(x^k_{1,1},R_k l^{k}(S^k_1)))=4\left(\sum\limits_{l=1}^{m_1}m_{l,i}+\widetilde{m}_{i}\right)+o(1),\quad i\in J,
      \end{equation*}
      and
      \begin{equation*}
      \sigma^k_i(B(x^k_{1,1},R_k l^{k}(S^k_1)))=4\sum\limits_{l=1}^{m_1}m_{l,i}+o(1),\quad i\in \{1,2,3\}\setminus J.
      \end{equation*}
      Notice that the total gain of local masses at each step is at least $4$. Following almost the same argument as {\bf Step 3} of the proof to Proposition \ref{CYL-Ch-5-Proposition-1}, we could get the conclusion \ref{CYL-Ch-5-Proposition-3} in finitely many steps.
\end{proof}

\section{Proof of Theorem \ref{CYL-Theorem-1} and Theorem \ref{CYL-Theorem-2}}\label{CYL-Section-6}
\setcounter{equation}{0}
In last section, we have decomposed
\begin{equation*}
	\Sigma_k=\{0\}\cup S_1^k\cup\cdots\cup S_{m_0}^k,
\end{equation*}
and completed the combination process for the points in each group $S_l$ with $1\leq l\leq m_0$. In this section, we shall regard $S_l^k$ as points and do the further collection. Precisely, we shall collect $\{0\}$ and several closer sets $S_l^k$ at first. Suppose $x_l^k\in S_l^k$. Without loss of generality, we may assume the sets $S_1^k,\cdots,S_{l_0}^k$ are the ones with
\begin{equation*}
	C^{-1}|x_1^k|\leq|x_l^k|\leq C|x_1^k|,~\mbox{for}~1\leq l\leq l_0,
\end{equation*}
and
\begin{equation*}
	|x_l^k|\gg |x_1^k|\quad \mbox{for}\quad l>l_0.
\end{equation*}
Setting $\tau_k=\frac12$ if $m_0=0$ and $\tau_k=\frac{1}{2}|x^k_1|$ if $m_0>0$. Let $\sigma_i=\hat{\sigma}_i(\bm{\tau})$, where $\hat{\sigma}_i(\bm{\tau})$ is defined as in \eqref{CYL-Ch-4-Eq-14}, then $\bm{\sigma}\in\Gamma(\bm{\mu})$. To start the second step of combination procedure, we compute the possible blow up mass in the bubbling disk centered at origin.

\begin{lemma}\label{CYL-Ch-6-Lemma-1}
$\bm{\sigma}=(\sigma_1,\sigma_2,\sigma_3)\in\Gamma(\bm{\mu})$.
\end{lemma}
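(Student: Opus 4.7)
The plan is to mirror the three-step argument of Proposition \ref{CYL-Ch-5-Proposition-1}, adapted to the fact that the bubbling disk $B(0,\tau_k)$ is centered at the singular point $0$ and, by the choice $\tau_k=\tfrac12|x_1^k|$, contains no other point of $\Sigma_k$. We track the local mass as the radius grows from the innermost bubbling scale out to $\tau_k$, verifying at each ``fast decay'' scale that the accumulated mass lies in $\Gamma(\bm{\mu})$, and we induct on the number of intermediate bubbling layers.

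\textbf{Step 1} (innermost fast-decay scale). Perform the initial bubbling $v_i^k(y)=u_i^k(\varepsilon^k y)+2\log\varepsilon^k$ with $\varepsilon^k=e^{-\frac12\max_i u_i^k(0)}$. A nonempty subset $J\subseteq\{1,2,3\}$ of components converges in $C^2_{\rm loc}(\mathbb{R}^2\setminus\{0\})$ to a solution of a singular system with sources $4\pi\gamma_i\delta_0$. Invoking the classification results---Prajapat--Tarantello for $|J|=1$, decoupled Liouville for $J=\{1,2\}$, and Theorem \ref{CYL-Ch7-Theorem-Classification} for $J\supseteq\{3\}$---the mass $\hat{\bm{\sigma}}(\mathbf{l})$ at the first fast-decay radius $l_k$ is read off as one of $\bm{0}$, $\mathfrak{R}_i\bm{0}$, $\mathfrak{R}_1\mathfrak{R}_2\bm{0}$, or an element of $\{\mathfrak{R}_i\mathfrak{R}_3\bm{0},\mathfrak{R}_3\mathfrak{R}_i\bm{0},\mathfrak{R}_i\mathfrak{R}_3\mathfrak{R}_i\bm{0},\mathfrak{R}_3\mathfrak{R}_i\mathfrak{R}_3\bm{0},\mathfrak{R}_{i3}\bm{0}\}$ for $i=1,2$. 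Each of these lies in $\Gamma(\bm{\mu})$ directly from the definition of that set.

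\textbf{Step 2} (outward iteration). Assume inductively that the conclusion holds at some fast-decay radius $\rho_k$, with $\hat{\bm{\sigma}}(\bm{\rho})\in\Gamma(\bm{\mu})$. If $\hat{\bm{\sigma}}(\bm{\rho})=\hat{\bm{\sigma}}(\bm{\tau})$, we are done. Otherwise Lemma \ref{CYL-Ch-4-Lemma-3} furnishes $\rho_k\ll s_k\leq\tau_k$ with $\hat{\bm{\sigma}}(\mathbf{s})=\hat{\bm{\sigma}}(\bm{\rho})$ and a nonempty set $J'\subseteq\{1,2,3\}$ of slow-decaying components on $\partial B(0,s_k)$. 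Rescaling at $s_k$, the limit solves a singular system at $0$ with effective singularity $\gamma_i^{*}=\gamma_i-\tfrac12\sum_j a_{ij}\hat{\sigma}_j(\bm{\rho})>-1$ (the inequality by Lemma \ref{CYL-Ch-4-Lemma-1}(a)). Repeating the classification of Step 1 with $\mu_i$ replaced by $\mu_i^{*}=1+\gamma_i^{*}$, and arguing exactly as in Proposition \ref{CYL-Ch-5-Proposition-2}, one finds a product $\mathfrak{R}\in G$ of generators $\mathfrak{R}_1,\mathfrak{R}_2,\mathfrak{R}_3$ such that the mass at the next fast-decay radius equals $\mathfrak{R}\hat{\bm{\sigma}}(\bm{\rho})$. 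Since $\Gamma(\bm{\mu})$ is closed under the action of $G$, this new mass remains in $\Gamma(\bm{\mu})$.

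\textbf{Step 3} (termination). Each outward iteration strictly increases $\sum_i\hat{\sigma}_i$ by a definite positive amount (bounded below by $4\min_{i\in J'}\mu_i^{*}$), while condition (iii) of \eqref{1.10} caps the total energy uniformly. Hence the iteration terminates in finitely many steps with $\hat{\bm{\sigma}}(\bm{\tau})\in\Gamma(\bm{\mu})$. The principal difficulty is Step~2: one must check that, in every possible configuration of the slow-decaying set $J'$ and every possible limit (singular Liouville, decoupled pair, or singular $\mathbf{B}_2$ Toda), the mass increment given by the classification is precisely the image of $\hat{\bm{\sigma}}(\bm{\rho})$ under one of the admissible products of generators $\mathfrak{R}_1,\mathfrak{R}_2,\mathfrak{R}_3$. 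This is the analogue, at the singular centre, of Proposition \ref{CYL-Ch-5-Proposition-2}, and its verification is carried out by the same case-by-case algebraic computation used there, now with the modified parameters $\mu_i^{*}$ in place of $\mu_i$.
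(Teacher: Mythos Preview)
Your overall three-step strategy is the same as the paper's, and Steps~2 and~3 are essentially correct. The gap is in Step~1.

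You define the innermost scale by $\varepsilon^k=e^{-\frac12\max_i u_i^k(0)}$, mimicking Proposition~\ref{CYL-Ch-5-Proposition-1}. But $0$ is the singular point: near $0$ one has $u_i^k(x)=2\gamma_i\log|x|+\text{(regular)}$, so $u_i^k(0)$ is not defined (and since $\gamma_1+\gamma_2+2\gamma_3=0$ with not all $\gamma_i$ zero, some component genuinely diverges). More importantly, in the selection process of Proposition~\ref{CYL-Ch-2-Proposition-1} the origin is placed in $\Sigma_k$ only as the singular point, \emph{not} as a bubbling center $x_j^k$; no scale $l^k$ or $\varepsilon^k$ is attached to it. Since $\tau_k=\tfrac12|x_1^k|$, the disk $B(0,\tau_k)$ contains no $x_j^k$ at all, so there is no ``innermost bubbling scale'' to start from in the sense you describe.

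The paper handles this differently and simply: choose $r_k\ll\tau_k$ with $\max_i\sigma_i^k(B(0,r_k))\le 1/k$ and fast decay on $\partial B(0,r_k)$ (possible by integrability and the Harnack inequality). Then $\hat{\bm{\sigma}}(\mathbf{r})=\bm{0}\in\Gamma(\bm{\mu})$ trivially, and Lemma~\ref{CYL-Ch-4-Lemma-3} locates the first scale $s_k\gg r_k$ with slow decay and $\hat{\bm{\sigma}}(\mathbf{s})=\bm{0}$. Rescaling at $s_k$ then gives a limit system with singularity strength $\gamma_i^*=\gamma_i$, and the classification yields exactly the elements $\mathfrak{R}_l\bm{0}$, $\mathfrak{R}_{12}\bm{0}$, $\mathfrak{R}_{l3}\bm{0}$ that you list. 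After this correction your Step~2 iteration goes through; note however that since $B(0,\tau_k)$ contains only the single Dirac mass at $0$, each limit system has one singular source, so you should invoke Theorem~\ref{CYL-Ch7-Theorem-Classification}(a) (or Step~2 of Proposition~\ref{CYL-Ch-5-Proposition-1}) rather than Proposition~\ref{CYL-Ch-5-Proposition-2}, and the new mass is exactly $\mathfrak{R}\hat{\bm{\sigma}}(\bm{\rho})$ with no $4\mathbb{Z}$ ambiguity.
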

\begin{proof}
We select $r_k\ll\tau_k$ such that $\max\limits_{i=1,2,3}\sigma^k_i(B(0,r_k))\leq1/k$ and $\mathbf{u}^k$ has fast decay on $\partial B(0,r_k)$. We can apply Lemma \ref{CYL-Ch-4-Lemma-3} to find a sequence $s_k$ such that at least one component of $\mathbf{u}^k$ has slow decay on $\partial B(0,s_k)$ and $\hat{\bm{\sigma}}(\mathbf{s})=0$. If $\tau_k/s_k\leq C$ or $\bm{\sigma}=\hat{\bm{\sigma}}(\mathbf{s})$, then Lemma \ref{CYL-Ch-6-Lemma-1} is proved, i.e., $\bm{\sigma}=\bm{0}\in\Gamma(\bm{\mu})$. If $\tau_k/s_k\to+\infty$, we can apply the same arguments of \textbf{Step 2} and \textbf{Step 3} in the proof of Proposition \ref{CYL-Ch-5-Proposition-1}. In fact, let
\begin{equation*}
v^k_i(y)=u^k_i({s}_ky)+2\log{{s}_k}.
\end{equation*}
{Suppose that there exists $J\subsetneqq\{1,2,3\}$ such that $u^k_i$ has slow decay on $\partial B(0,s_k)$ for $i\in J$, while the left ones have fast decay on $\partial B(0,s_k)$, i.e., $v^k_i(y)\to-\infty$ in $L_{\mathrm{loc}}^{\infty}(\mathbb{R}^2)$ for $i\in\{1,2,3\}\setminus J$ and $v^k_i(y)$ converges to $v_i(y)$ in $C_{\mathrm{loc}}^2(\mathbb{R}^2)$ $\text{for}~i\in J$, where $v_i(y)$ satisfies}
\begin{equation*}
\Delta v_i(y)+\sum\limits_{j\in J}a_{ij}e^{v_j(y)}=4\pi\gamma_i^{*}\delta_0\quad\text{in}\quad\mathbb{R}^2,\quad i\in J,
\end{equation*}
and $\gamma_i^{*}=\gamma_i-\frac{1}{2}\sum_{j=1}^{3}a_{ij}\hat{\sigma}_{j}(\mathbf{s})=\gamma_i>-1$. Furthermore, there exists a sequence $N_k^{*}$ with $N_{k}^{*}\to +\infty$ as $k\to+\infty$ such that $N_k^{*}s_k\leq\tau_k$ and satisfies
\begin{enumerate}[(1).]
\item $\int_{B(0,{N_k^{*}})}e^{v_{i}}\mathrm{d}y=\int_{\mathbb{R}^2}e^{v_{i}}\mathrm{d}y+o(1)$, $i\in J$,
\item $v^k_i(y)+2\log|y|\leq-N_k$ for $|y|=N^{*}_{k}$, for some $N_k\to+\infty$, $i=1,2,3$,
\item $\mathbf{u}^k$ has fast decay on $\partial B(x^k,N^{*}_{k} s_k)$.
\end{enumerate}
Then one of the following alternatives holds:
\medskip

\noindent\textbf{Case (a).} If $J$ only contains one element $l$ for some $l\in\{1,2,3\}.$ Applying the classification result of \cite{Prajapat-Tarantello-2001}, we have
\begin{equation*}
\begin{aligned}
\hat{\sigma}_{l}(\mathbf{N^{*}s})&=\frac{1}{2\pi}\int_{\mathbb{R}^2}e^{v_{l}}\mathrm{d}x
+\hat{\sigma}_{l}(\mathbf{s})
=4\left(1+\gamma_l\right)+0=4\mu_l=\left(\mathfrak{R}_{l}(\bm{0})\right)_{l}.
\end{aligned}
\end{equation*}
Notice that $\hat{\sigma}_{i}(\mathbf{N^{*}s})=\hat{\sigma}_{i}(\mathbf{s})=0$ for $i\in\{1,2,3\}\setminus\{l\}$, hence $\hat{\bm{\sigma}}(\mathbf{N^{*}s})=\mathfrak{R}_{l}(\bm{0})\in\Gamma(\bm{\mu})$.
\medskip

\noindent\textbf{Case (b).} If {$J=\{1,2\}$.} By the classification result of \cite{Prajapat-Tarantello-2001} again, we have
\begin{equation*}
\begin{aligned}
\hat{\sigma}_{l}(\mathbf{N^{*}s})&=\frac{1}{2\pi}\int_{\mathbb{R}^2}e^{v_{l}}\mathrm{d}x+\hat{\sigma}_l(\mathbf{s})
=4\mu_l+0=\left(\mathfrak{R}_{12}(\bm{0})\right)_{l},\quad l=1,2.
\end{aligned}
\end{equation*}
While $\hat{\sigma}_3(\mathbf{N^{*}s})=\hat{\sigma}_3(\mathbf{s})=0$, therefore $\hat{\bm{\sigma}}(\mathbf{N^{*}s})=\mathfrak{R}_{12}(\bm{0})\in\Gamma(\bm{\mu})$.
\medskip

\noindent\textbf{Case (c).} If {$J=\{l,3\}$ for $l=1$ or $2$.} Applying the classification result Theorem \ref{CYL-Ch7-Theorem-Classification}-(a), we have
\begin{equation*}
\begin{aligned}
\hat{\sigma}_{l}(\mathbf{N^{*}s})
&=\frac{1}{2\pi}\int_{\mathbb{R}^2}e^{v_{l}}\mathrm{d}x+\hat{\sigma}_{l}(\mathbf{s})=8\mu_l+8\mu_3+0
=\left(\mathfrak{R}_{l3}(\bm{0})\right)_{l},\quad l=1~\text{or}~2,
\end{aligned}
\end{equation*}
and
\begin{equation*}
\begin{aligned}
\hat{\sigma}_{3}(\mathbf{N^{*}s})
&=\frac{1}{2\pi}\int_{\mathbb{R}^2}e^{v_{3}}\mathrm{d}x+\hat{\sigma}_{3}(\mathbf{s})
=4\mu_l+8\mu_3+0
=\left(\mathfrak{R}_{l3}(\bm{0})\right)_{3}, \quad l=1~\text{or}~2.
\end{aligned}
\end{equation*}
For the left component  $j\in\{1,2,3\}\setminus\{l,3\}$, $\hat{\sigma}_{j}(\mathbf{N^{*}s})=\hat{\sigma}_{j}(\mathbf{s})=0$, then $\hat{\bm{\sigma}}(\mathbf{N^{*}s})=\mathfrak{R}_{l3}(\bm{0})\in\Gamma(\bm{\mu})$, $l=1$ or $2$.
\medskip

Next, let $s_{k,1}=N_{k}^{*}s_k$. If
\begin{equation*}
\hat{\sigma}_i(\mathbf{s}_1)=\hat{\sigma}_i(\bm{\tau})\quad\text{for}\quad i=1,2,3,
\end{equation*}
then Lemma \ref{CYL-Ch-6-Lemma-1} is proved. Otherwise, we could repeat the above argument to find $s_{k,1}\ll s_{k,j}\ll s_{k,j+1}$ such that
$\hat{\bm{\sigma}}(\mathbf{s_{j+1}})\in\Gamma(\bm{\mu})$. Since the energy is finite and the total gain for the local masses at each step has a lower bound
\begin{equation*}
\sum\limits_{i=1}^{3}\left(\hat{\sigma}_i(\mathbf{s}_{j+1})-\hat{\sigma}_i(\mathbf{s}_j)\right)
\geq\min\limits_{i=1,2,3}4\mu_i>0,
\end{equation*}
the process will stop after finitely many $j$ steps (by a little abuse of notations) and we have
\begin{equation*}
\hat{\sigma}_i(\mathbf{s}_j)=\hat{\sigma}_i(\bm{\tau})\quad\text{for}\quad i=1,2,3.
\end{equation*}
Hence
\begin{equation*}
(\sigma_1,\sigma_2,\sigma_3)=(\hat{\sigma}_1(\bm{\tau}),\hat{\sigma}_2(\bm{\tau}),\hat{\sigma}_3(\bm{\tau}))
\in\Gamma(\bm{\mu}),
\end{equation*}
The statement of at least one component of $\mathbf{u}^k$ has fast decay on $\partial B(0,\tau_k)$ follows by Lemma \ref{CYL-Ch-4-Lemma-1}-(b).
\end{proof}

Now we are able to provide the proof of Theorem  \ref{CYL-Theorem-1}
\begin{proof}[{\it Proof of Theorem \ref{CYL-Theorem-1}}.]
If $m_0=0$, we prove it immediately by Lemma \ref{CYL-Ch-6-Lemma-1}. So we assume that $m_0\neq 0$. We select $S_1^k,\cdots,S_{l_0}^k$ as the beginning of this section  and group together
\begin{equation*}
S=\{0\}\cup S_1^k\cup\cdots\cup S_{l_0}^k.
\end{equation*}
In other words, we view $\{0\}$ and $S_i^k$ ($1\leq i\leq l_0$) as points. Let
\begin{equation*}
{\hat{\tau}^k(S)=\frac{1}{2}\|S^k_{l_0+1}\|\gg l^k_{l_0}(S),}
\end{equation*}
where
\begin{equation*}
{\|S^k_{l_0+1}\|=\min\limits_{x^k\in \Sigma_k\setminus S}\mathrm{dist}(0,x^k)} \quad \text{and} \quad l^k_{l_0}(S):=2\max\limits_{1\leq l\leq l_0}\max\limits_{x^k\in S_l^k}\mathrm{dist}(0,x^k).
\end{equation*}
Similar to the proof of Proposition \ref{CYL-Ch-5-Proposition-3}, we shall encounter the following two cases.
\medskip

\noindent \textbf{Case (1).} $\mathbf{u}^k$ has fast decay on $\partial B(0,l^k_{l_0}(S))$. Similar to the argument of \eqref{CYL-Ch-5-Eq-7} in Proposition \ref{CYL-Ch-5-Proposition-3}, we deduce that the local mass
\begin{equation*}
\begin{aligned}
\hat{\sigma}_{i}(\mathbf{l}_{l_0}(S))=\hat{\sigma}_{i}({\bm{\tau}})+
\sum\limits_{l=1}^{l_0}\hat{\sigma}_{i}\left(B\left(\mathbf{x}_l,\bm{\tau}_{\mathbf{s}_l}\right)\right)
=\hat{\sigma}_{i}({\bm{\tau}})+4m_i,\quad i=1,2,3,
\end{aligned}
\end{equation*}
where $m_i\in\mathbb{N}\cup\{0\}$, $\mathbf{l}_{l_0}(S)$ and $\left(\mathbf{x}_\ell,\bm{\tau}_{S_\ell}\right)$ stand for the sequence of $\{l^k_{l_0}(S)\}$ and the pair $\{(x^k_l,\tau^k_{S_l})\}$, and $\hat{\bm{\sigma}}({\bm{\tau}})\in\Gamma(\bm{\mu})$ by Lemma \ref{CYL-Ch-6-Lemma-1}.
If
\begin{equation}
\label{eq6-condition1}
\hat{\sigma}_{i}(\hat{\bm{\tau}}(S))=\hat{\sigma}_{i}(\mathbf{l}_{l_0}(S)),\quad i=1,2,3,
\end{equation}
then we complete the process of grouping $\{0\}$ and $S_1^k,\cdots,S_{l_0}^k$, and the proof of Theorem \ref{CYL-Theorem-1} is complete by grouping till $S^k_{m_0}$. If \eqref{eq6-condition1} does not hold, by Lemma \ref{CYL-Ch-4-Lemma-3} we can find a sequence $s_k$ with $l^k_{l_0}(S)\ll s_k\ll\hat{\tau}^k(S)$ such that some components  $u^k_i~(i\in J)$ have slow decay on $\partial B(0,s_k)$ and
\begin{equation*}
\hat{\sigma}_{i}(\mathbf{l}_{l_0}(S))=\hat{\sigma}_i(\mathbf{s}),\quad i=1,2,3,
\end{equation*}
where $\mathbf{s}$ stands for the sequence $\{s_k\}$. Let
\begin{equation*}
v^k_i(y)=u^k_i(s_ky)+2\log{s_k}.
\end{equation*}
Then we can apply the same argument of \textbf{Step 2} and \textbf{Step 3} in the proof of Proposition \ref{CYL-Ch-5-Proposition-1} and the result of Proposition \ref{CYL-Ch-5-Proposition-2} to get that
\begin{equation*}
\hat{\sigma}_{i}(\hat{\bm{\tau}}(S))=\hat{\sigma}_{i}(\mathbf{s}_{\ell})\in\Gamma(\bm{\mu})+4\mathbb{Z},\quad i=1,2,3,
\end{equation*}
for some $\mathbf{s}_{\ell}$, where $\mathbf{s}_{\ell}$ stands for the sequence $\{s^k_\ell\}$ with $l_{l_0}^k(S)\ll s^k_\ell\ll \hat{\tau}^k(S)$. Indeed, we have $v^k_i(y)\to-\infty$ in $L^\infty_{\mathrm{loc}}(\mathbb{R}^2)$ for $i\in \{1,2,3\}\setminus J$ and $v^k_i(y)$ converges to $v_i(y)$ for $i\in J$ in $C^2_{\mathrm{loc}}(\mathbb{R}^2)$, where $v_i(y)$ satisfies \eqref{CYL-Ch-5-Eq-2}. All the possibilities have been discussed in Proposition \ref{CYL-Ch-5-Proposition-2}. Then there is a sequence $N_k^*\to+\infty$ as $k\to+\infty$ such that $\mathbf{u}^k$ has fast decay on $\partial B(0,N_k^* s_k)$. Using Proposition \ref{CYL-Ch-5-Proposition-2} we conclude that the new local mass
\begin{equation*}
\bm{\sigma}:=\lim\limits_{k\to+\infty}\left(\sigma^k_1(N_k^* s_k),\sigma^k_2(N_k^* s_k),\sigma^k_3(N_k^* s_k)\right)\in\Gamma(\bm{\mu})+4\mathbb{Z}.
\end{equation*}
We can repeat almost the same procedure as above to find $s_{k,\ell+1}\gg s_{k,\ell}$ such that $\hat{\bm{\sigma}}(\mathbf{s}_{\ell+1})\in\Gamma(\bm{\mu})+4\mathbb{Z}$ if necessary. Since the energy is finite and the total gain of local masses at each step has a lower bound, then the process will stop after finitely many steps and
\begin{equation*}
\bm{\sigma}
=(\hat{\sigma}_{1}(\hat{\bm{\tau}}(S)),\hat{\sigma}_{2}(\hat{\bm{\tau}}(S)),\hat{\sigma}_{3}(\hat{\bm{\tau}}(S)))
\in\Gamma(\bm{\mu})+4\mathbb{Z}.
\end{equation*}
This ends the process of grouping $\{0\}$ and $S^k_1,\cdots,S^k_{l_0}$. One can continue the procedure until  $S_{m_0}$ is included. Then Theorem \ref{CYL-Theorem-1} is proved.
\medskip

\noindent \textbf{Case (2).} Suppose that there is a subset $J\subsetneqq \{1,2,3\}$ such that $u^k_i~(i\in J)$ has slow decay on $\partial B(0,l^k_{l_0}(S))$. Let
\begin{equation*}
v^k_i(y)=u^k_i(l^k_{l_0}(S) y)+2\log{l^k_{l_0}(S)}.
\end{equation*}
Then we go back to the proof of \textbf{Case~(1)} by replacing $s_k$ by $l^k_{l_0}(S)$. The rest proof is similar and we skip the details.
\end{proof}

As a consequence, we are able to provide the proof of Theorem \ref{CYL-Theorem-2}.

\begin{proof}[{\it Proof of Theorem \ref{CYL-Theorem-2}.}]
Suppose that $\mathbf{\tilde u}^k$ is a sequence of solutions to system  \eqref{1.16}. Let
\begin{equation*}
\hat{u}^k_i(x)=\tilde u^k_i(x)+\log\frac{\rho^k_i}{\int_{M}h^k_ie^{u^k_i}\mathrm{d}V_{g}},\quad i=1,2,3.
\end{equation*}
Then the system \eqref{1.16} can be rewritten as follows
\begin{equation}
\label{6.final}
\begin{aligned}
\Delta \hat{u}^k_i+\sum\limits_{j=1}^{3}a_{ij}\tilde h^k_je^{\hat{u}^k_j}-\sum\limits_{j=1}^3\frac{a_{ij}\rho_j}{|M|}=0,\quad i=1,2,3.
\end{aligned}
\end{equation}
We notice that
$$\hat u_i^k\leq \tilde u_i^k+\log \rho_i^k-\int_Mu_i^k\mathrm{d}V_g-C\quad\mbox{for some universal constant}~C,$$
where we have used the Jensen's inequality and $h_i^k$ are smooth positive functions. Thus,
\begin{equation}
\label{6.final-1}
\hat u_1^k+\hat u_2^k+2\hat u_3^k= C_k,\quad C_k~\mbox{is uniformly bounded above with respect to}~k.
\end{equation}

Around each blow up $p$ we take $r$ such that $B\cap B_r(p)=\{p\}$, and set
\begin{equation*}
\bar u_i^k=\begin{cases}
\hat u_i^k,\quad &\mbox{if}\quad p\notin S,\\	
\hat u_i^k+4\pi\alpha_{p,i}G(x,p), &\mbox{if}\quad p\in S.
\end{cases}
\end{equation*}
Together with \eqref{1.15}, \eqref{6.final} and \eqref{6.final-1} we see that $\bar u_i^k$ satisfies
\begin{equation}
\label{6.final-3}
\begin{cases}
\Delta\bar u_i^k+\sum\limits_{j=1}^3a_{ij}\bar h_j^ke^{\bar u_j^k}-\sum\limits_{j=1}^3\frac{a_{ij}\rho_j}{|M|}+\frac{4\pi\beta_{p,i}}{|M|}=4\pi\beta_{p,i}\delta_p,\quad i=1,2,3,\\
\bar u_1^k+\bar u_2^k+2\bar u_3^k= C_k,\quad C_k~\mbox{is uniformly bounded above with respect to}~k.
\end{cases}
\end{equation}
where $\bar h_i^k=\tilde h_i^k\exp\left(-4\pi\beta_{p,i}G(x,p)\right),~i=1,2,3$ and
\begin{equation*}
\beta_{p,i}=\begin{cases}
0,\quad &\mbox{if}\quad p\notin S\\
\alpha_{p,i},\quad &\mbox{if}\quad p\in S.
\end{cases}
\end{equation*}
Then it is not difficult to see that $\bar h_i^k,~i=1,2,3$ are
smooth and positive in $B_r(p)$. Though there are some extra constant terms in \eqref{6.final-3} and $\Delta_g$ contains some other terms than the Euclidean Laplace operator in the local coordinate, they can be ignored when we perform the blow up analysis for \eqref{6.final-3} since we only need the local information for computing the local mass. By Theorem \ref{CYL-Theorem-1} and the Remark \ref{CYL-Ch-1-Remark-1} we have
\begin{equation*}
\frac{1}{2\pi}\lim\limits_{r\to 0}\lim\limits_{k\to+\infty}\int_{B(p,r)}\bar h^k_je^{\bar{u}^k_j}\mathrm{d}V_{g}=\sigma_{p,j}^*+4m_{p,j},~j=1,2,3,
\end{equation*}
where $(\sigma_{p,1}^*,\sigma_{p,2}^*,\sigma_{p,3}^*)\in\Gamma(\mu_{p,1},\mu_{p,2},\mu_{p,3})$ and $m_{p,j}\in\mathbb{Z}$. This amounts to say that
\begin{equation*}
\frac{1}{2\pi}\lim\limits_{r\to 0}\lim\limits_{k\to +\infty}\frac{\rho^k_{j}\int_{B(p,r)}h^k_je^{u^k_j}\mathrm{d}V_{g}}{\int_{M}h^k_je^{u^k_j}\mathrm{d}V_{g}}
=\sigma_{p,j}^*+4m_{p,j},~j=1,2,3.
\end{equation*}
Together with the fact there is  {at least} one component which possesses the concentration property, without loss of generality, we may assume the first component has such property. Then we get that
\begin{equation*}
\lim\limits_{k\to+\infty}\rho^k_1=2\pi\sum\limits_{p \in R}\sigma_{p,1}^*+8\pi m_{1}~\mbox{for some}~m_1\in\mathbb{Z}.
\end{equation*}
It obeys the choice of $\rho_1\notin\Gamma_1$. Therefore, we must have ${\mathbf{\tilde u}}^k$ is uniformly bounded and it finishes the proof.
\end{proof}

\section{Blow-up analysis for the sinh-Gordon equation}\label{CYL-Section-8}
\setcounter{equation}{0}
In this section, we consider the blow-up analysis for the sinh-Gordon equation
\begin{equation}\label{CYL-Ch-8-Eq-1}
\Delta u+e^{u}-e^{-u}=0.
\end{equation}
Let $u_1=u,~u_2=-u$, then \eqref{CYL-Ch-8-Eq-1} can be written as the following equation with $\gamma_1=\gamma_2=0$,
\begin{equation}\label{CYL-Ch-8-Eq-2}
\left\{
\begin{aligned}
\Delta u_{1}+e^{u_{1}}-e^{u_{2}}&=4\pi{\gamma_1}\delta_0,\\
\Delta u_{2}-e^{u_{1}}+e^{u_{2}}&=4\pi{\gamma_2}\delta_0,\\
u_{1}+u_{2}&=0.
\end{aligned}
\right.
\end{equation}
The local mass for the blow up solutions to equation \eqref{CYL-Ch-8-Eq-1} with $\gamma_1=\gamma_2=0$ has been derived by Jost-Wang-Ye-Zhou \cite{Jost-Wang-Ye-Zhou-2008} and Jevnikar-Wei-Yang \cite{Jevnikar-Wei-Yang-2018DIE}  by different methods. Now we shall consider a more general situation, where
\begin{equation}\label{CYL-Ch-8-Eq-9}
\gamma_i>-1,~i=1,2,\quad \gamma_1=-\gamma_2\neq0.
\end{equation}
Suppose that $\mathbf{u}^k=(u^k_1,u^k_2)$ is a sequence of blow-up solutions of system \eqref{CYL-Ch-8-Eq-2} in $B_1(0)$  satisfying
\begin{equation}
\label{CYL-Ch-8-Eq-5}
\left\{
\begin{aligned}
&\int_{B_{1}(0)}e^{u_{i}^{k}(x)}\mathrm{d}x\leq C,\quad i=1,2,\\
&|u^{k}_{i}(x)-u^{k}_{i}(y)|\leq C,~i=1,2,~\forall x,y\in\partial B_1(0),~\text{for some constant}~C>0,\\
&\max\limits_{i=1,2}\max\limits_{K\subset\subset\overline{B_{1}(0)}\setminus\{0\}}u^{k}_{i}(x)\leq C(K),~\max\limits_{i=1,2}\max\limits_{\overline{B_1(0)}}(u^k_i(x)+2\gamma_i\log|x|)\to+\infty.
\end{aligned}
\right.
\end{equation}
Denote the local mass $\bm{\sigma}$ corresponding to the solutions $\mathbf{u}^k$ by
\begin{equation*}
\begin{aligned}
\sigma_{i}:=\frac{1}{2\pi}\lim\limits_{r\to 0}\lim\limits_{k\to\infty}\int_{B_{r}(0)}e^{u^{k}_{i}(x)}\mathrm{d}x,~i=1,2.
\end{aligned}
\end{equation*}
Then applying the same arguments from Section \ref{CYL-Section-2} to Section \ref{CYL-Section-6} in the present article, we obtain the following result.

\begin{theorem}\label{CYL-Ch-8-Theorem-1}
Suppose that $\mathbf{u}^{k}=(u^{k}_{1},u^{k}_{2})$ is a sequence of solutions of system \eqref{CYL-Ch-8-Eq-2} satisfying \eqref{CYL-Ch-8-Eq-9}--\eqref{CYL-Ch-8-Eq-5}. Let $\mu_i=1+\gamma_i,~i=1,2$. Then there exists ${\bm{0}\neq\hat{\bm{\sigma}}}=(\hat{\sigma}_1,\hat{\sigma}_2)\in\Gamma(\bm{\mu})$ such that
\begin{equation*}
{\sigma}_i=\hat{\sigma}_i+4m_{i},~m_i\in\mathbb{Z},~i=1,2,
\end{equation*}
where
\begin{equation}\label{CYL-Ch-8-Eq-7}
\begin{aligned}
\Gamma(\bm{\mu})=&\bigg\{(\sigma_1,\sigma_2)~\big{|} ~ \sigma_i=4\sum\limits_{j=1}^{2}n_{ij}\mu_j,~n_{ij}\in\mathbb{N}\cup\{0\},~i=1,2,~(\sigma_1-\sigma_2)^2
=4(\mu_1\sigma_1+\mu_2\sigma_2)\bigg\}.
\end{aligned}
\end{equation}
Furthermore, one can easily check that \eqref{CYL-Ch-8-Eq-7} is equivalent to
\begin{equation*}
\Gamma(\bm{\mu})=\bigg\{(\sigma_1,\sigma_2)~\bigg{|}~
\begin{aligned}
&\sigma_1=(m+1)^2\mu_1+(m^2-1)\mu_2,~~\sigma_2=(m^2-1)\mu_1+(m-1)^2\mu_2,~m\equiv 1~(\mathrm{mod}~2)\\
&\sigma_1=m^2\mu_1+\left((m-1)^2-1\right)\mu_2,~\sigma_2=\left((m+1)^2-1\right)\mu_1+m^2\mu_2,~m\equiv 0~(\mathrm{mod}~2)
\end{aligned}
\bigg\}.
\end{equation*}
\end{theorem}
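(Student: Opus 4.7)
The plan is to run the strategy of Sections \ref{CYL-Section-2}--\ref{CYL-Section-6} in the simpler rank-two setting, where the affine Weyl group is $\mathbf{A}_1^{(1)}$ and is generated by just two reflections $\mathfrak{R}_1,\mathfrak{R}_2$ with $(\mathfrak{R}_i\bm{\sigma})_j=4\mu_i-2\sum_{\ell=1}^2 a_{i\ell}\sigma_\ell+\sigma_i$ when $j=i$ and unchanged otherwise, where $a_{11}=a_{22}=2$, $a_{12}=a_{21}=-2$ is the Cartan matrix of $\mathbf{A}_1^{(1)}$ (exactly the structure of system \eqref{CYL-Ch-8-Eq-2}). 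The dihedral-type relations here are simply $\mathfrak{R}_i^2=\mathfrak{I}$ and $|\mathfrak{R}_1\mathfrak{R}_2|=\infty$, so there is exactly one free integer in the end (hence one integer $m$ indexing the $\Gamma(\bm{\mu})$ of the theorem).

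First I would set up the bubbling-area analysis: a standard Brezis--Merle/selection procedure produces a blow-up set $\Sigma_k=\{0,x_1^k,\ldots,x_m^k\}$, scaling parameters $\varepsilon_j^k$ and radii $l_j^k$, along with three alternatives for the post-scaling limit: either one component converges to a Liouville bubble while the other tends to $-\infty$, or both converge to solutions of the non-singular sinh-Gordon equation \eqref{CYL-Ch-8-Eq-1}. This uses \cite{Brezis-Merle-1991} and the arguments of \cite[Lemma 2.1]{Liu-Wang-2021}. Simultaneously one obtains the Harnack-type oscillation estimate outside $\Sigma_k$, exactly as in Proposition \ref{CYL-Ch-2-Proposition-2}. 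Then I would derive the local Pohozaev identity following the proof of Proposition \ref{CYL-Ch-2-Proposition-5}: applying the standard Pohozaev identity to the first equation of \eqref{CYL-Ch-8-Eq-2} on an annulus, using $u_1^k+u_2^k=0$ to cancel the cross terms and \cite[Lemma 4.1]{Lin-Zhang-2010} to handle the singular contribution near $0$, gives
\[
(\sigma_1^k(r_k)-\gamma_1)^2+(\sigma_2^k(r_k)-\gamma_2)^2=4(\sigma_1^k(r_k)+\sigma_2^k(r_k))+2(\gamma_1^2+\gamma_2^2)+o(1),
\]
which, using $\gamma_1=-\gamma_2$ and $\mu_i=1+\gamma_i$, reduces to $(\sigma_1-\sigma_2)^2=4(\mu_1\sigma_1+\mu_2\sigma_2)$.

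Next I would develop the algebraic side: defining $\Gamma(\bm{\mu})$ as the orbit of $\bm{0}$ under the two reflections $\mathfrak{R}_1,\mathfrak{R}_2$, I would prove (by induction on the length of a simplest chain, as in Proposition \ref{CYL-Ch-3-Proposition-3} and Proposition \ref{CYL-Ch-3-Proposition-5}) that $\Gamma(\bm{\mu})=\Gamma_N(\bm{\mu})=\{\bm{\sigma}:\sigma_i=4\sum_j n_{ij}\mu_j,\ n_{ij}\in\mathbb{N}\cup\{0\},\ \text{Pohozaev holds}\}$. The explicit parametrization by the single integer $m$ is then obtained by solving the recurrence $\mathfrak{R}_1\mathfrak{R}_2$ explicitly: since the two reflections generate the infinite dihedral group, iterating produces exactly two families of $(n_{ij})$ indexed by the parity of the length of the reduced word. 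Writing out the first few levels of the tree shows the pattern $(m+1)^2\mu_1+(m^2-1)\mu_2,\ (m^2-1)\mu_1+(m-1)^2\mu_2$ for odd $m$ and the complementary formula for even $m$; I would verify this closed form by direct substitution into \eqref{CYL-Ch-2-Eq-Pohozaev-identity} and induction on $m$.

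The analytic heart is the analog of Lemma \ref{CYL-Ch-4-Lemma-1}, Lemma \ref{CYL-Ch-4-Lemma-2} and Lemma \ref{CYL-Ch-4-Lemma-3}: on an annulus where all components have fast decay, the monotonicity formula $\frac{d}{dr}(\bar u_i^k(r)+2\log r)=(2\mu_i-\sum_j a_{ij}\sigma_j^k(r))/r$ forces the mass to be locally constant, and (via classification of singular Liouville solutions \cite{Prajapat-Tarantello-2001} and of the non-singular sinh-Gordon by \cite{Jost-Wang-Ye-Zhou-2008}) every jump up must occur on a slow-decay scale and increments $\bm{\sigma}$ by either a $4\mu_i^*+4\mathbb{Z}$ Liouville contribution or by a sinh-Gordon contribution already in $\Gamma(\bm{\mu}^*)+4\mathbb{Z}$. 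With these in hand, one mimics Proposition \ref{CYL-Ch-5-Proposition-1}--Proposition \ref{CYL-Ch-5-Proposition-3} to handle bubbling disks away from $0$, and Lemma \ref{CYL-Ch-6-Lemma-1} and the combining procedure of Section \ref{CYL-Section-6} to incorporate the singular disk at $0$. The iteration terminates by the finite-energy bound (iii) of \eqref{CYL-Ch-8-Eq-5}, yielding $\bm{\sigma}=\hat{\bm{\sigma}}+4\bm{m}$ with $\hat{\bm{\sigma}}\in\Gamma(\bm{\mu})$.

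The main obstacle I foresee is the bookkeeping in the non-singular sinh-Gordon bubbling step: when at an intermediate scale both components have slow decay, the limit is a solution of \eqref{CYL-Ch-8-Eq-1} on $\mathbb{R}^2\setminus\{q_1,\ldots,q_N\}$ with prescribed Dirac masses, and one must invoke the (affine) Jost--Wang--Ye--Zhou/Jevnikar--Wei--Yang quantization to show the mass increment belongs to $\Gamma(\bm{\mu}^*)$ where $\mu_i^*=\mu_i-\frac12\sum_j a_{ij}\sigma_j$. Combining this with the inductive hypothesis $\bm{\sigma}\in\hat{\bm{\sigma}}+4\mathbb{Z}^2$, $\hat{\bm{\sigma}}\in\Gamma(\bm{\mu})$, and checking that the resulting new $\hat{\bm{\sigma}}^*$ still lies in $\Gamma(\bm{\mu})$ (rather than merely in some enlarged set) is the analog of Proposition \ref{CYL-Ch-5-Proposition-2} and requires the explicit matching of the reflected formulas, but this is exactly where the infinite dihedral structure is cleaner than $\mathbf{B}_2^{(1)}$ and all 8 types collapse to the 2 parity classes displayed in the theorem.
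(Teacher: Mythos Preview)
Your proposal follows essentially the same route as the paper, which simply says ``applying the same arguments from Section~\ref{CYL-Section-2} to Section~\ref{CYL-Section-6}''. Your outline of the selection procedure, Pohozaev identity, fast/slow decay lemmas, and grouping procedure is the right skeleton. Two points need correction, however.

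\medskip

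\textbf{The ``both components converge'' case never occurs.} You list as a third alternative in the selection procedure that both rescaled components converge to a solution of the non-singular sinh-Gordon equation, and you identify the bookkeeping in this case as ``the main obstacle''. But this case is impossible: the constraint $u_1^k+u_2^k=0$ forces $v_1^k+v_2^k=4\log\varepsilon_j^k\to-\infty$, so at most one of $v_1^k,v_2^k$ can stay bounded at any scale. Consequently the only limiting equations that appear are single Liouville equations (with one Dirac mass at the origin or with integer Dirac masses at the $q_\ell$), and the Prajapat--Tarantello classification \cite{Prajapat-Tarantello-2001} suffices throughout. There is no analogue of the $\mathbf{B}_2$/$\mathbf{C}_2$ Toda subsystem appearing in Case~(c) of Proposition~\ref{CYL-Ch-5-Proposition-2}, and hence no need for the appendix classification or for the Jost--Wang--Ye--Zhou quantization at intermediate scales. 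This is precisely why the sinh-Gordon case is much simpler and collapses to two parity classes instead of eight.

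\medskip

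\textbf{The intermediate Pohozaev identity is miswritten.} The formula you display,
\[
(\sigma_1^k(r_k)-\gamma_1)^2+(\sigma_2^k(r_k)-\gamma_2)^2=4(\sigma_1^k(r_k)+\sigma_2^k(r_k))+2(\gamma_1^2+\gamma_2^2)+o(1),
\]
does not reduce to $(\sigma_1-\sigma_2)^2=4(\mu_1\sigma_1+\mu_2\sigma_2)$: expanding and using $\gamma_1=-\gamma_2$ gives $\sigma_1^2+\sigma_2^2$ on the left with no cross term, whereas the correct identity has $(\sigma_1-\sigma_2)^2$. Running the argument of Proposition~\ref{CYL-Ch-2-Proposition-5} with the single independent function $u_1^k$ (using $u_2^k=-u_1^k$ to convert the cross term $\int e^{u_2^k}(x\cdot\nabla u_1^k)$ into $-\int x\cdot\nabla e^{u_2^k}$) yields instead
\[
\bigl(\sigma_1^k(r_k)-\sigma_2^k(r_k)-2\gamma_1\bigr)^2=4\bigl(\sigma_1^k(r_k)+\sigma_2^k(r_k)\bigr)+4\gamma_1^2+o(1),
\]
which does rearrange to the stated limit identity. (Relatedly, the coefficient matrix of \eqref{CYL-Ch-8-Eq-2} as written is $\begin{psmallmatrix}1&-1\\-1&1\end{psmallmatrix}$, not $\begin{psmallmatrix}2&-2\\-2&2\end{psmallmatrix}$; the reflection formula $(\mathfrak{R}_i\bm{\sigma})_i=4\mu_i-2\sum_j a_{ij}\sigma_j+\sigma_i$ should use those entries.)
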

\begin{rmk}\label{CYL-Ch-8-Remark-2}
For the case of $\gamma_1=\gamma_2=0$, we obtain
\begin{equation*}
\Gamma(\bm{\mu})=\big\{(\sigma_1,\sigma_2)=\left(2m(m+1),2m(m-1)\right)~
\text{or}~\left(2m(m-1),2m(m+1)\right)\big\}.
\end{equation*}
This result coincides with \cite[Corollary 1.2]{Jost-Wang-Ye-Zhou-2008} and \cite[Theorem 1.1]{Jevnikar-Wei-Yang-2018DIE}.
\end{rmk}

\section{Appendix: classification results}\label{CYL-Section-7}
\setcounter{equation}{0}
In this appendix, we provide a quantization result which has been used frequently in the grouping process. We consider the following system
\begin{equation}\label{CYL-Ch-7-Eq-1}
\begin{cases}
\Delta u+e^u-\frac12e^v=4\pi\sum_{\ell=1}^N\alpha_{\ell,1}\delta_{p_\ell},\quad &\mathrm{in}\quad\mathbb{R}^2,\\
\Delta v-e^u+e^v=4\pi\sum_{\ell=1}^N\alpha_{\ell,2}\delta_{p_\ell},\quad &\mathrm{in}\quad\mathbb{R}^2,\\
\int_{\mathbb{R}^2}e^u\mathrm{d}x,~\int_{\mathbb{R}^2}e^v\mathrm{d}x<+\infty,
\end{cases}
\end{equation}
{where $p_{\ell}$ are distinct points in $\mathbb{R}^2$ and $\alpha_{\ell,i}>-1$ for $1\leq\ell\leq N$, $i=1,2$.}

Let
\begin{equation*}
\sigma_u=\frac{1}{2\pi}\int_{\mathbb{R}^2}e^u\mathrm{d}x,\quad
\sigma_v=\frac{1}{2\pi}\int_{\mathbb{R}^2}e^v\mathrm{d}x.
\end{equation*}
The main result of this section is the following

\begin{theorem}\label{CYL-Ch7-Theorem-Classification}
Suppose that $(u,v)$ is a solution to system \eqref{CYL-Ch-7-Eq-1}.
\begin{enumerate}[(a).]
\item If $N=1$, i.e., there is only one singular source on the right hand side of \eqref{CYL-Ch-7-Eq-1}, then there holds
\begin{equation*}
(\sigma_u,\sigma_v)=(8\alpha_{1,1}+4\alpha_{1,2}+12, 8\alpha_{1,1}+8\alpha_{1,2}+16).
\end{equation*}
	
\item If $\alpha_{\ell,i}\in\mathbb{N}\cup\{0\},~i=1,2,~1\le\ell\le N$, then there holds $(\sigma_u,\sigma_v)=(4\tilde N_1,4\tilde N_2)$ for some {positive} integers $\tilde N_1$ and $\tilde N_2$.
	
\item If $\alpha_{1,i}>-1,~i=1,2$ and $\alpha_{\ell,i}\in\mathbb{N}\cup\{0\},~i=1,2,~2\le\ell\le N$, then there holds
\begin{equation*}
(\sigma_u,\sigma_v)=(\sigma_u^*+4N_1^*,\sigma_v^*+4N_2^*),
\end{equation*}
where {$N_i^*\in\mathbb{Z}$}, $i=1,2$ and
\begin{equation*}
\begin{aligned}
(\sigma_u^*,\sigma_v^*)\in\Big\{&(0,0),(4\alpha_{1,1},0),(0,4\alpha_{1,2}),
(4\alpha_{1,1},8\alpha_{1,1}+4\alpha_{1,2}),
(4\alpha_{1,1}+4\alpha_{1,2},4\alpha_{1,2}),\\
&(4\alpha_{1,1}+4\alpha_{1,2},8\alpha_{1,1}+8\alpha_{1,2}),
(8\alpha_{1,1}+4\alpha_{1,2},8\alpha_{1,1}+4\alpha_{1,2}), (8\alpha_{1,1}+4\alpha_{1,2},8\alpha_{1,1}+8\alpha_{1,2})\Big\}.
\end{aligned}
\end{equation*}
\end{enumerate}
\end{theorem}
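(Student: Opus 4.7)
The backbone of the argument is a Pohozaev identity for \eqref{CYL-Ch-7-Eq-1}, combined with a Weyl-group analysis and the existing classification for the $\mathbf{B}_2$ Toda system with a single singular source. After the relabeling $u\leftrightarrow u_3$, $v\leftrightarrow u_1$, the system \eqref{CYL-Ch-7-Eq-1} is exactly the restriction of the three-component ambient system to the index set $J=\{1,3\}$ (cf.\ Proposition \ref{CYL-Ch-5-Proposition-2}, Case~(c)). Consequently, the derivation of Proposition \ref{CYL-Ch-2-Proposition-5} applies verbatim with $\sigma_2\equiv 0$; integrating the resulting Pohozaev identity on a ball $B_R(0)$ containing all $p_\ell$ and letting $R\to+\infty$ (using the finite-energy hypothesis to kill boundary terms at infinity) yields
\begin{equation*}
(\sigma_u-\sigma_v)^2+\sigma_u^2=8\tilde{\mu}_1\sigma_u+4\tilde{\mu}_2\sigma_v,
\qquad \tilde{\mu}_i=1+\sum_{\ell=1}^N\alpha_{\ell,i}.
\end{equation*}
This algebraic constraint will be common to all three parts.

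For part (a), I place $p_1=0$, write $\mu_i=1+\alpha_{1,i}$, and invoke the Chen--Li moving plane method applied to the pair $(u,v)$ to conclude that any finite-energy solution with a single source is radially symmetric about $0$. I then invoke the classification of radial $\mathbf{B}_2$-Toda solutions with one singular source (Lin--Wei--Ye \cite{Lin-Wei-Ye-2012}, or equivalently the rational-ODE framework of Karmakar--Lin--Nie \cite{Karmakar-Lin-Nie-2020}), which identifies the total mass with the image of $\bm{0}$ under the longest element of the $\mathbf{B}_2$ Weyl group, namely $(\mathfrak{R}_1\mathfrak{R}_3)^2$. Running this reflection sequence explicitly on $\bm{0}$ via the recipe introduced in Section~\ref{CYL-Section-1}, one obtains $(\sigma_u,\sigma_v)=(8\mu_1+4\mu_2,\,8\mu_1+8\mu_2)$, which matches $(8\alpha_{1,1}+4\alpha_{1,2}+12,\,8\alpha_{1,1}+8\alpha_{1,2}+16)$; a direct substitution verifies the Pohozaev identity above as a sanity check.

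For parts (b) and (c), I proceed by induction on $N$ using the blow-up machinery already developed. When all $\alpha_{\ell,i}\in\mathbb{N}\cup\{0\}$, each source is an apparent singularity that can be absorbed by a holomorphic gauge transformation (as in the $\mathbf{A}_n$ treatment of Lin--Yang--Zhong \cite{Lin-Yang-Zhong-2020}), reducing the problem to a regular $\mathbf{B}_2$ Toda system on $\mathbb{R}^2$ whose total masses are known to be multiples of $4$; this yields part (b). For part (c) I decompose the total mass into the contribution from $p_1$ plus $4\mathbb{Z}$-contributions from the integer singularities $p_2,\ldots,p_N$ and from the bulk. The $p_1$-contribution lies in the orbit of $\bm{0}$ under the subgroup $\langle\mathfrak{R}_1,\mathfrak{R}_3\rangle$ of $G$; a direct computation (four reflections each side, closing up via $(\mathfrak{R}_1\mathfrak{R}_3)^2=(\mathfrak{R}_3\mathfrak{R}_1)^2$) produces exactly eight distinct pairs, and reducing them modulo $4\mathbb{Z}$ gives precisely the eight candidates $(\sigma_u^*,\sigma_v^*)$ listed in the statement.

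The main obstacle is the uniqueness step in part (a): ruling out the seven shorter Weyl images of $\bm{0}$ as globally realized total masses. The Pohozaev identity alone is insufficient, since every Weyl image solves it. The cleanest resolution is to cite the ODE classification, which forces the local monodromy at $0$ and the behavior at infinity to be jointly compatible with only the longest Weyl element. A fully self-contained argument would require a delicate asymptotic analysis of the polynomial $W$-invariants of $(u,v)$ combined with a radial ODE shooting argument, which lies outside the principal techniques of the present paper; this is why invoking the existing classification is the most efficient route.
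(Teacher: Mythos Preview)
Your route is genuinely different from the paper's and carries real gaps. The paper does not set up a new Pohozaev identity, does not use moving planes, and does not run any blow-up induction. Instead it makes a single change of variables $u=u_1+\log 2$, $v=u_2+\log 2$, under which \eqref{CYL-Ch-7-Eq-1} becomes the standard $\mathbf{C}_2$ Toda system \eqref{CYL-Ch-7-Eq-3}. That system is then realized as the symmetric folding $\tilde u_1=\tilde u_3$, $\alpha_{\ell,1}=\alpha_{\ell,3}$ of the $\mathbf{A}_3$ Toda system \eqref{CYL-Ch-7-Eq-4}. Part (a) is read off from the $\mathrm{SU}(n+1)$ classification of Lin--Wei--Ye \cite{Lin-Wei-Ye-2012}, and part (c) (hence (b)) from \cite[Theorem~8.1]{Lin-Yang-Zhong-2020}, where the folding constraint $f(l)+f(3-l)=3$ on the permutation $f$ produces exactly the eight pairs listed. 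The relation $(\sigma_{u_1},\sigma_{u_2})=\tfrac12(\sigma_u,\sigma_v)$ converts back to the original masses.

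By contrast, your argument has two concrete weaknesses. First, in part (a) you invoke Lin--Wei--Ye and Karmakar--Lin--Nie directly for the ``$\mathbf{B}_2$'' system; but \cite{Lin-Wei-Ye-2012} treats only $\mathrm{SU}(n+1)$ (type $\mathbf{A}_n$) and \cite{Karmakar-Lin-Nie-2020} treats $\mathbf{D}_4,\mathbf{F}_4$, so without the $\mathbf{C}_2\hookrightarrow\mathbf{A}_3$ embedding the citation is not justified. The preliminary moving-plane step is also unsubstantiated here (the coefficient matrix is not symmetric) and in any case unnecessary, since the cited classifications do not require radial input. Second, for parts (b) and (c) your ``holomorphic gauge'' removal of integer singularities and ``induction on $N$ via blow-up'' are programmatic rather than proofs; what actually closes these cases is precisely \cite[Theorem~8.1]{Lin-Yang-Zhong-2020} applied to the $\mathbf{A}_3$ system with the symmetry constraint on $f$, which your proposal never reaches. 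The missing idea is the $\log 2$ rescaling and the $\mathbf{C}_2\subset\mathbf{A}_3$ folding, which collapse the whole theorem to two citations.
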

\begin{proof}
Let
$$u=u_1+\log2,\quad v=u_2+\log2.$$
Then we have
\begin{equation}\label{CYL-Ch-7-Eq-2}
(\sigma_{u_1},\sigma_{u_2})=\frac12(\sigma_u,\sigma_v),~\quad {where}\quad \sigma_{u_i}=\frac{1}{2\pi}\int_{\mathbb{R}^2}e^{u_i}\mathrm{d}x,
\end{equation}
and $(u_1,u_2)$ satisfies
\begin{equation}\label{CYL-Ch-7-Eq-3}
\begin{cases}
\Delta u_1+2e^{u_1}-e^{u_2}=4\pi\sum_{\ell=1}^N\alpha_{\ell,1}\delta_{p_\ell}, \\
\Delta u_2-2 e^{u_1}+2e^{u_2}=4\pi\sum_{\ell=1}^N\alpha_{\ell,2}\delta_{p_\ell},
\end{cases}\quad  \mathrm{in}\quad\mathbb{R}^2.
\end{equation}
We notice that equation \eqref{CYL-Ch-7-Eq-3} is exactly the standard $\mathbf{C}_2$ Toda system, which can be obtained from ${\bf A}_3$ Toda system with some symmetry condition. Precisely, we consider the following system
\begin{equation}\label{CYL-Ch-7-Eq-4}
\begin{cases}
\Delta \tilde u_1+2e^{\tilde u_1}-e^{\tilde u_2}=4\pi\sum_{\ell=1}^N\alpha_{\ell,1}\delta_{p_\ell}, \\
\Delta \tilde u_2- e^{\tilde u_1}+2e^{\tilde u_2}-e^{\tilde u_3}=4\pi\sum_{\ell=1}^N\alpha_{\ell,2}\delta_{p_\ell}, \\
\Delta \tilde u_3- e^{\tilde u_2}+2e^{\tilde u_3} =4\pi\sum_{\ell=1}^N\alpha_{\ell,3}\delta_{p_\ell},
\end{cases}\quad  \mathrm{in}\quad\mathbb{R}^2.
\end{equation}
Setting
\begin{equation}
\label{CYL-Ch-7-Eq-5}
\tilde u_1=\tilde u_3=u_1, \quad \tilde u_2=u_2, \quad \alpha_{\ell,1}=\alpha_{\ell,3},~1\leq \ell\leq N.
\end{equation}
Then \eqref{CYL-Ch-7-Eq-4} can be reduced to \eqref{CYL-Ch-7-Eq-3}.

When there is only one singularity on the right hand side of \eqref{CYL-Ch-7-Eq-4} we get from \cite[Theorem 1.3]{Lin-Wei-Ye-2012} that
\begin{equation*}
\frac{1}{2\pi}\int_{\mathbb{R}^2}e^{\tilde u_1}\mathrm{d}x=4\alpha_{1,1}+2\alpha_{1,2}+6,\quad
\frac{1}{2\pi}\int_{\mathbb{R}^2}e^{\tilde u_2}\mathrm{d}x=4\alpha_{1,1}+4\alpha_{1,2}+8,
\end{equation*}
which together with \eqref{CYL-Ch-7-Eq-2} implies the conclusion (a).

It is easy to see that conclusion (b) is an easy consequence of (c). So in the following we shall focus on the point (c). By \cite[Theorem 8.1]{Lin-Yang-Zhong-2020}, we have
\begin{equation*}
\sigma_{u_1}=2\sum_{l=1}^{f(0)}\alpha_{1,l}+2\bar N_1,\quad
\sigma_{u_2}=2\sum_{j=0}^1\left(\sum_{l=1}^{f(j)}\alpha_{1,l}-
\sum_{l=1}^{j}\alpha_{1,l}\right)+2\bar N_2,
\end{equation*}
where {$\bar N_i\in\mathbb{Z}$}, $i=1,2$, $\alpha_{1,l}$, $l=1,2,3$ are given as in \eqref{CYL-Ch-7-Eq-5} and $f$ is a permutation map from $\{0,1,2,3\}$ to itself satisfying
\begin{equation*}
f(l)+f(3-l)=3,~l=0,1,2,3.
\end{equation*}
Exhausting all the possibilities of $f$, we have
\begin{equation*}
\begin{aligned}
\left(2\sum_{l=1}^{f(0)}\alpha_{1,l},2\sum_{j=0}^1\left(\sum_{l=1}^{f(j)}\alpha_{1,l}-
\sum_{l=1}^{j}\alpha_{1,l}\right)\right)\in
\Big\{&(0,0),(2\alpha_{1,1},0),(0,2\alpha_{1,2}),(2\alpha_{1,1},4\alpha_{1,1}+2\alpha_{1,2}),\\
&(2\alpha_{1,1}+2\alpha_{1,2},2\alpha_{1,2}),
(2\alpha_{1,1}+2\alpha_{1,2},4\alpha_{1,1}+4\alpha_{1,2}),\\
&(4\alpha_{1,1}+2\alpha_{1,2},4\alpha_{1,1}+2\alpha_{1,2}), (4\alpha_{1,1}+2\alpha_{1,2},4\alpha_{1,1}+4\alpha_{1,2})\Big\}.
\end{aligned}
\end{equation*}
Together with \eqref{CYL-Ch-7-Eq-2}, we get the conclusion (c) and it finishes the proof.
\end{proof}

\section*{Acknowledgement}
J. Wei is partially supported by National Sciences and Engineering Research Council of Canada. W. Yang is partially supported by NSFC No. 12171456 and No. 11871470. L. Zhang is partially supported by Simons Foundation Collaboration Grant.  We thank Prof. Zhaohu Nie for useful conversations.


\begin{thebibliography}{99}
\footnotesize
\bibitem{Barhi-Coron-1991} A. Bahri, J.M. Coron, {\it The scalar curvature problem on the standard three dimensional sphere}, J. Funct. Anal. {\bf95} (1991) 106--172.

\bibitem{Balog-1990} J. Balog, L. Feh\'er, L. O'Raifeartaigh, P. Forg\'acs, A. Wipf, {\it Toda theory and W-algebra from a gauged WZNW point of view}, Ann. Phys. {\bf203} (1990), no. 1, 76--136.

\bibitem{Bartolucci-Tarantello-2017} D. Bartolucci, G. Tarantello, {\it Asymptotic blow-up analysis for singular Liouville type equations with applications}, J. Differential Equations {\bf262} (2017), no. 7, 3887--3931.

\bibitem{Bartolucci-Tarantello-2002} D. Bartolucci, G. Tarantello, {\it Liouville type equations with singular data and their applications to periodic multivortices for the electroweak theory}, Comm. Math. Phys. {\bf229} (2002), no. 1, 3--47.

\bibitem{Bolton-Jensen-Rigoli-Woodward-1988} J. Bolton, G.R. Jensen, M. Rigoli, L.M. Woodward, {\it On conformal minimal immersions of $\mathbb{S}^2$ into $\mathbb{CP}^n$}, Math. Ann. {\bf279} (1988), 599--620.

\bibitem{Bolton-Woodward-1997} J. Bolton, L.M. Woodward, {\it Some geometrical aspects of the $2$-dimensional Toda equations}, in: Geometry, Topology and Physics, Campinas, 1996, de Gruyter, Berlin, 1997, pp. 69--81.

\bibitem{Bour-1862} E. Bour, {\it Th\'{e}orie d\'{e} la deformation des surfaces}, Journal de l'\'Ecole imp\'eriale polytechnique. (39) {\bf22} (1862), 1--148.

\bibitem{Brezis-Merle-1991} H. Brezis, F. Merle, {\it Uniform estimates and blow-up behavior for solutions of $-\Delta u=V(x)e^u$ in two dimensions}, Comm. Partial Differential Equation {\bf16} (1991), 1223--1253.

\bibitem{Cecotti-1991} S. Cecotti, C. Vafa, {\it  Topological-anti-topological fusion}, Nuclear Phys. B {\bf367} (1991), 359--461.

\bibitem{Chang-Gursky-Yang-1993} S.Y.A. Chang, M. Gursky, P. Yang, {\it The scalar curvature equation on $2$- and $3$-spheres}, Calc. Var. Partial Differ. Equ. {\bf1} (1993), no. 2, 205--229.

\bibitem{Chang-Yang-1987} S.Y.A. Chang, P. Yang, {\it Prescribing Gaussian curvature on $S^2$}, Acta Math. {\bf159} (1987), 215--259.

\bibitem{Chen-Lin-2002} C.C. Chen, C.S. Lin, {\it Sharp estimates for solutions of multi-bubbles in compact Riemann surface}, Comm. Pure Appl. Math. {\bf55} (2002), 728--771.

\bibitem{Chen-Lin-2003} C.C. Chen, C.S. Lin, {\it Topological degree for a mean field equation on Riemann surfaces}, Comm. Pure Appl. Math. {\bf56} (2003), 1667--1727.

\bibitem{Chipot-Shafrir-Wolansky-1997} M. Chipot, I. Shafrir, G. Wolansky, {\it On the solutions of Liouville systems}, J. Differential Equations {\bf140} (1997), no. 1, 59--105.

\bibitem{Ding-Tian-1995} W. Ding, G. Tian, {\it Energy identity for a class of approximate harmonic maps from surfaces}, Commun. Anal. Geom. {\bf3} (1995), 543--554.

\bibitem{Doliwa-1997} A. Doliwa, {\it Holomorphic curves and Toda systems}, Lett. Math. Phys. {\bf39} (1997), 21--32.

\bibitem{Eells-Wood-1983} J. Eells, J.C. Wood, {\it Harmonic maps from surfaces to complex projective spaces}, Adv. Math. {\bf49} (1983), 217--263.

\bibitem{Februs-Pedit-Pinkall-Sterlig-1992} D. Ferus, F. Pedit, U. Pinkall, I. Sterling, {\it Minimal tori in $\mathbb{S}^4$}, J. Reine Angew. Math. {\bf429} (1992), 1--47.

\bibitem{Fordy-Gibbons} A.P. Fordy, J. Gibbons, {\it Integrable nonlinear Klein-Gordon equations and Toda lattices}, Comm. Math. Phys. {\bf77} (1980), 21--30.

\bibitem{Guest-Its-Lin-2015-1} M. Guest, A.R. Its, C.S. Lin, {\it Isomonodromy aspects of the $tt^*$ equations of Cecotti and Vafa I. Stokes data}, Int. Math. Res. Not. 2015, no. 22, 11745--11784.

\bibitem{Guest-Its-Lin-2015-2} M. Guest, A.R. Its, C.S. Lin, {\it Isomonodromy aspects of the $tt^*$ equations of Cecotti and Vafa II: Riemann-Hilbert problem}, Comm. Math. Phys. {\bf336} (2015), no. 1, 337--380.

\bibitem{Guest-Its-Lin-2015-3} M. Guest, A.R. Its, C.S. Lin, {\it Isomonodromy aspects of the $tt^*$ equations of Cecotti and Vafa III: Iwasawa factorization and asymptotics}, Comm. Math. Phys. {\bf374} (2020), no. 2, 923--973.

\bibitem{Guest-Lin-2014} M. Guest, C.S. Lin, {\it Nonlinear PDE aspects of the $tt^*$ equations of Cecotti and Vafa}, J. Reine Angew. Math. {\bf689} (2014), 1--32.

\bibitem{Jevnikar-Wei-Yang-2018DIE} A. Jevnikar, J. Wei, W. Yang, {\it Classification of blow-up limits for the sinh-Gordon equation}, Differ. Integral Equ. {\bf31} (9--10) (2018), 657--684.

\bibitem{Jevnikar-Wei-Yang-2018IU} A. Jevnikar, J. Wei, W. Yang, {\it On the topological degree of the mean field equation with two parameters}, Indiana Univ. Math. J. {\bf67} (2018), no 1, 29--88.

\bibitem{Jevnikar-Yang-2016} A. Jevnikar, W. Yang, {\it Analytic aspects of the Tzitz\'eica equation: blow-up analysis and existence results}, Calc. Var. Partial Differential Equations {\bf56} (2017), no. 2, 56:43.

\bibitem{Jost-Lin-Wang-2006} J. Jost, C. Lin, G. Wang, {\it Analytic aspects of the Toda system. II. Bubbling behavior and existence of solutions}, Comm. Pure Appl. Math. {\bf59} (2006), no. 4, 526--558.

\bibitem{Jost-Wang-2001} J. Jost, G. Wang, {\it Analytic aspects of the Toda system. I. A Moser-Trudinger inequality}, Comm. Pure Appl. Math. {\bf54} (2001), 1289--1319.

\bibitem{Jost-Wang-2002} J. Jost, G. Wang, {\it Classification of solutions of a Toda system in $\mathbb{R}^2$}, Int. Math. Res. Not. 2002, no. 6, 277--290.

\bibitem{Jost-Wang-Ye-Zhou-2008} J. Jost, G. Wang, D. Ye, C. Zhou, {\it The blow up analysis of solutions to the elliptic sinh-Gordon equation}, Calc. Var. Partial Differ. Equ. {\bf31} (2008), no 2, 263--276.

\bibitem{Karmakar-Lin-Nie-2020} D. Karmakar, C.S. Lin, Z. Nie, {\it A priori estimates for $D_4$ and $F_4$ Toda systems}, J. Funct. Anal. {\bf279} (2020), no 8, 108694.

\bibitem{klnw} D. Karmakar, C.S. Lin, Z. Nie, J.C. Wei, {\it Total masses of solutions to general Toda system with singular sources}, preprint.

\bibitem{Kuo-Lin-2016}  T.J. Kuo, C.S. Lin,  {\it Estimates of the mean field equations with integer singular sources: non-simple blowup}, J. Differential Geom. {\bf103} (2016), no. 3, 377--424.

\bibitem{Li-1999} Y. Li, {\it Harnack type inequality: the method of moving planes}, Comm. Math. Phys. {\bf200} (1999), no 2, 421--444.

\bibitem{Li-Shafrir-1994} Y. Li, I. Shafrir, {\it Blow-up analysis for solutions of $-\Delta u=Ve^u$ in dimension two}, Indiana Univ. Math. J. {\bf43} (1994), no 4, 1255--1270.

\bibitem{Lin-Wei-Yang-Zhang-2018} C.S. Lin, J. Wei, W. Yang, L. Zhang, {\it On rank-$2$ Toda systems with arbitrary singularities: local mass and new estimates}, Anal. PDE {\bf11} (2018), no 4, 873--898.

\bibitem{Lin-Wei-Ye-2012} C.S. Lin, J. Wei, D. Ye, {\it Classification and nondegeneracy of $\mathrm{SU}(n+1)$ Toda system with singular sources}, Invent. Math. {\bf190} (2012), no 1, 169--207.

\bibitem{Lin-Wei-Zhang-2015} C.S. Lin, J. Wei, L. Zhang, {\it Classification of blowup limits for $\mathrm{SU}(3)$ singular Toda systems}, Anal. PDE {\bf8} (2015), 807--837.

\bibitem{Lin-Yang-2021} C.S. Lin, W. Yang, {\it Sharp estimate for the critical parameters of $\mathrm{SU}(3)$ Toda system with arbitrary singularities, I}, Vietnam Journal of Mathematics. {\bf49} (2021), 363--379.

\bibitem{Lin-Yang-Zhong-2020} C.S. Lin, W. Yang, X. Zhong, {\it A priori estimates of Toda systems, I: the Lie algebras of $A_n$, $B_n$, $C_n$ and $G_2$}, J. Differ. Geom. {\bf114} (2020), no 2, 337--391.

\bibitem{Lin-Zhang-2016} C.S. Lin, L. Zhang, {\it Energy concentration for Singular Toda systems with $B_2$ and $G_2$ types of Cartan matrices}, Int. Math. Res. Not. 2016, no. 16, 5076--5105.

\bibitem{Lin-Zhang-2010} C.S. Lin, L. Zhang, {\it Profile of bubbling solutions to a Liouville system}, Ann. Inst. H. Poincar\'{e} Anal. Non Lin\'{e}aire {\bf27} (2010), no. 1, 117--143.

\bibitem{Liu-Wang-2021} L. Liu, G. Wang, {\it The blow-up analysis of an affine Toda system corresponding to superconformal minimal surfaces in $\mathbb{S}^4$}, J. Funct. Anal. {\bf281} (2021), 109194.

\bibitem{Lusztig-1985} G. Lusztig, {\it Cells in affine Weyl groups. Algebraic groups and related topics (Kyoto/Nagoya, 1983)}, Adv. Stud. Pure Math., 6, North-Holland, Amsterdam, 1985, 255--287.

\bibitem{Nie-2016} Z.H. Nie, {\it Classification of solutions to Toda systems of types $C$ and $B$ with singular sources}, Calc. Var. Partial Differ. Equ. {\bf55} (2016), no. 3, 55:53.

\bibitem{Ohtsuka-Suzuki} H. Ohtsuka, T. Suzuki, {\it A blowup analysis of the mean field equation for arbitrarily signed vortices}, in: Self-Similar Solutions of Nonlinear PDE, Vol. 74, 2006, pp. 185--197.

\bibitem{Ohtsuka-Suzuki-2007}  H. Ohtsuka, T. Suzuki, {\it Blow-up analysis for $\mathrm{SU}(3)$ Toda system}, J. Differential Equations {\bf232} (2007), no. 2, 419--440.

\bibitem{Pinkall-Sterling-1989} U. Pinkall, I. Sterling, {\it On the classification of constant mean curvature tori}, Ann. Math. {\bf130} (1989), 407--451.

\bibitem{Prajapat-Tarantello-2001} J. Prajapat, G. Tarantello, {\it On a class of elliptic problems in $\mathbb{R}^2$: symmetry and uniqueness results}, Proc. Roy. Soc. Edinburgh Sect. A {\bf{131}} (2001), no. 4, 967--985.

\bibitem{Sacks-Uhlenbeck-1982} J. Sacks, K. Uhlenbeck, {\it The existence of minimal immersions of 2-spheres}, Ann. Math. (2) {\bf113} (1981), no 1, 1--24.

\bibitem{Spruck-1986} J. Spruck, {\it The elliptic sinh Gordon equation and the construction of toroidal soap bubbles}, in: Calculus of Variations and Partial Differential Equations, Trento, 1986, in: Lecture Notes in Math., vol. 1340, Springer, Berlin, 1988, pp. 275--301.

\bibitem{Struwe-1986} M. Struwe, {\it Nonuniqueness in the Plateau problem for surfaces of constant mean curvature}, Arch. Ration. Mech. Anal. {\bf93} (1986), 135--157.

\bibitem{Tzitzeica-1910} G. Tzitz\'eica, {\it Sur une nouvelle classe de surfaces}, C. R. Acad. Sci. Paris {\bf150} (1910), 955--956.

\bibitem{Wei-Zhang-2021} J.C. Wei, L. Zhang, {\it Estimates for Liouville equation with quantized singularities}, Adv. Math. {\bf380} (2021), 107606.

\bibitem{Wei-Zhang-2022-1} J.C. Wei, L. Zhang, {\it Laplacian vanishing theorem for quantized singular Liouville equation}, preprint.

\bibitem{Wei-Zhang-2022} J.C. Wei, L. Zhang, {\it Vanishing estimates for Liouville equation with quantized singularities}, Proc. Lond. Math. Soc. (3) {\bf124} (2022), no. 1, 106--131.
\end{thebibliography}
\end{document}